%% file: arxiv.tex
\documentclass[11pt,english]{article}
\usepackage{a4wide}
\usepackage{threeparttable}
\usepackage{booktabs}
\usepackage{threeparttable}
\usepackage{changepage}
\usepackage{multirow}
\usepackage{caption}
\usepackage{algorithm}
\usepackage{algorithmic}
\newcommand{\twolines}[2]{%
  $\begin{gathered}
  \text{#1}\\[-1.15ex]
  \text{#2}
  \end{gathered}$%
}
\usepackage{xcolor}
\usepackage{pifont}
\newcommand{\cmark}{\textcolor{green!60!black}{\ding{51}}}
\newcommand{\xmark}{\textcolor{red!90!black}{\ding{55}}}

\usepackage{array}
\usepackage{tabularx}
\newcolumntype{C}{>{\centering\arraybackslash}X}

\usepackage[round]{natbib}

\usepackage[backref=page]{hyperref}
\renewcommand*{\backref}[1]{}
\renewcommand*{\backrefalt}[4]{%
    \ifcase #1 %
    \or        (Cited on page~#2.)%
    \else      (Cited on pages~#2.)%
    \fi}

\usepackage{nicefrac}
\usepackage{authblk}

\input{my_macros}

\title{Decentralized SGD under Heavy-Tailed Noise: Optimal Convergence Rates and the Role of Gradient Clipping}
\author{Aleksandar Armacki, Haoyuan Cai, Ali H. Sayed}
\affil{\'Ecole Polytechnique F\'ed\'erale de Lausanne, Lausanne, Switzerland, \\
\texttt{\{aleksandar.armacki, haoyuan.cai, ali.sayed\}@epfl.ch}
}
\date{}

\begin{document}

\maketitle

\begin{abstract}
  Heavy-tailed noise has been widely observed in modern machine learning, motivating the use of methods like gradient clipping and normalization. While these methods are well understood in centralized settings, much less is known in decentralized ones, where applying a nonlinearity to local gradients affects both optimization and consensus. Recent works on decentralized non-convex optimization have studied both clipping and normalization under heavy-tailed noise, with clipping yielding suboptimal rates and normalization needing local momentum or mini-batches to converge. This raises the question: \emph{can a baseline decentralized method using a nonlinearity achieve optimal convergence rates under heavy-tailed noise?} We answer affirmatively with clipped decentralized SGD (\dsgd). For smooth non-convex costs under bounded $p$-th moment noise, $p \in (1,2]$, we show that clipped \dsgd achieves order-optimal rates both with high probability and in expectation. Moreover, we establish a linear speed-up in the number of agents, which, to our knowledge, has not been shown for decentralized methods with clipping.
The key technical ingredient is a sharp analysis of the consensus gap that exploits the structure of clipping, relegating network effects to higher-order terms. Our results highlight an important distinction between clipping and normalization in decentralized settings: while normalized \dsgd can fail to converge, clipping retains magnitude information, enabling \dsgd to be convergent and order-optimal. Numerical experiments validate our theory.
\end{abstract}

\section{Introduction}

Modern learning systems, fueled by an abundance of data, often incur huge computation and storage costs. This naturally gives rise to distributed training, a paradigm where $n$ agents jointly train a model, storing and processing their data locally, only exchanging model or gradient parameters \citep{sayed-networks,pmlr-v54-mcmahan17a,vlaski_et_al}. Two well-known instances are the client-server and the decentralized setup, differing mainly in the communication is performed: in the client-server setup agents communicate with a central server, while in the decentralized one agents communicate with each other. Noting that the client-server setup is mathematically equivalent to the decentralized one over a fully connected network, we focus on the more general decentralized setup.

Another issue hampering modern learning systems is the emergence of heavy-tailed noise. First observed by \cite{zhang2020adaptive} during training of the BERT model, the effects of heavy-tailed noise have been extensively studied in centralized settings, e.g., \citep{simsekli2019tail,heavy-tail-phenomena,barsbey-heavy_tails_and_compressibility,heavy-tail-book,pmlr-v238-battash24a}, with nonlinear methods like clipping, normalization and sign widely used to deal with heavy tails. Recently, \cite{kun2025heavy} showed that heavy-tailed noise arises naturally in decentralized settings, with decentralized SGD (\dsgd) in some cases exhibiting heavier tails than its centralized counterpart. However, as we discuss next, convergence under heavy-tailed noise and the role of nonlinear methods are yet to be fully understood in decentralized settings.

\begin{table*}[t]
\captionsetup{font=scriptsize}
\caption{Comparison of decentralized methods using gradient nonlinearities. ``Algorithm'' refers to the underlying algorithmic dynamics, e.g., ``GT + mini-b.'' indicates gradient tracking and mini-batch gradients. ``Nonlinearity'' specifies the nonlinear map applied to the gradient estimator. ``Noise Moment'' gives the moment assumption, with $p=2$ implying bounded variance and $p < 2$ allowing heavier tails. ``Guarantees'' indicates high-probability (HP) or in-expectation (IE) results. ``Oracle Comp.'' gives the per-agent number of \sfo calls needed to reach an $\epsilon$-stationary point.$^\S$ ``Lin. Speed-up'' indicates whether linear speed-up in the number of agents is achieved.}
\label{tab:comp}

\centering
\begin{threeparttable}

{\scriptsize
\begin{sc}

\renewcommand{\tabularxcolumn}[1]{m{#1}}
\renewcommand{\arraystretch}{1.15}

\begin{tabularx}{\textwidth}{CCCCCCC}
\toprule
Work
& Algorithm
& Nonlinearity
& Noise Moment
& Guarantees
& Oracle Comp.
& Lin. Speed-up
\\
\midrule

\cite{clipped-dsgd1}
& \dsgd
& clipping
& $p \in (1,2]$
& HP
& $\wbigO\bigg(\epsilon^{\frac{4p}{1-p}}\bigg)$
& \xmark
\\
\midrule

\cite{li2025porter}$^\dagger$
& gt + mini-b. + compr. + ef
& \twolines{smooth}{clipping}
& $p = 2$
& IE
& $\bigO\Big(\epsilon^{-4}\Big)$
& \xmark
\\
\midrule

\cite{yu2026decentralized}
& gt + local momentum
& normalization
& $p \in (1,2]$
& IE
& $\bigO\bigg(\epsilon^{\frac{3p-2}{1-p}}\bigg)$
& \xmark
\\
\midrule

\cite{luoluo2026near}$^\ddagger$
& gt + mini-b.
  + acc. gossip
& normalization
& $p \in (1,2]$
& IE
& $\bigO\bigg(\epsilon^{\frac{3p-2}{1-p}} / n\bigg)$
& \cmark
\\
\midrule

This work
& \dsgd
& clipping
& $p \in (1,2]$
& HP \& IE
& $\wbigO\bigg(\epsilon^{\frac{3p-2}{1-p}} / n\bigg)$
& \cmark 
\\
\bottomrule
\end{tabularx}

\end{sc}
}

{\scriptsize
\begin{tablenotes}
    \item[$\S$] Because prior works use different stationarity metrics, we compare oracle complexities using target criteria based on both network-averaged iterates and agent-averaged local iterates, i.e., $\min_{t \in [T]}\|\nabla f(\oxt)\|^q \leq \epsilon^q$ and $\min_{t \in [T]}\big[\frac{1}{n}\sum_{i \in [n]}\|\nabla f(\xit)\|^q\big] \leq \epsilon^q$, $q \in \{1,2\}$.
    \item[$\dagger$] \cite{li2025porter} use communication compression and error-feedback (EF) techniques, with mini-batch gradients of size $b \propto \epsilon^{-2}$. The smooth clipping operator is defined as $\varphi(x) = \frac{\tau}{\tau + \|x\|}x$, for a user-specified $\tau > 0$. 
    \item[$\ddagger$] To mitigate network effects and achieve linear speed-up, \cite{luoluo2026near} use mini-batches of size $b \propto \epsilon^{\frac{p}{1-p}}/n$ and, for undirected graphs, perform $\widetilde{\Theta}\big(1/\sqrt{1 - \lambda}\big)$ communication rounds per iteration, where $1 - \lambda$ is the network spectral gap (see Section \ref{sec:main} for definition).
\end{tablenotes}
}

\end{threeparttable}
\end{table*}

\subsection{Related Literature}

We next review convergence results under heavy-tailed noise, focusing on non-convex costs. For further literature, including convex results, see Supplement \ref{sup:lit}.  

\textbf{Centralized Settings.} Moment assumptions on the stochastic first-order oracle (\sfo) including heavy-tailed regimes with only a finite $p$-th moment, $p > 1$, go back to \cite{nemirovski1983problem}. More recently, \cite{zhang2020adaptive} noticed that the gradient noise during training of attention models resembles Levy $\alpha$-stable noise and proposed the use of the bounded $p$-th moment condition. They establish an in-expectation (IE) lower bound $\Omega\Big(\epsilon^{\frac{3p-2}{1-p}}\Big)$ on the oracle complexity,\footnote{Meaning the number of \sfo calls needed to reach an $\epsilon$-stationary point, see Table \ref{tab:comp} for details.} and show it is achieved by clipped SGD. \cite{cutkosky2021high} complement this in the high-probability (HP) sense, for clipped + normalized momentum SGD. \cite{sadiev2023highprobability,nguyen2023improved} then establish HP guarantees for clipped SGD, with \cite{nguyen2023improved} showing the optimal HP oracle complexity, while the same is established for normalization in \citep{liu2025nonconvex,hubler2025normalization} and sign SGD in \citep{kornilov2025sign}. \cite{liu2023breaking} show an accelerated rate and improved complexity, using a STORM-like method \citep{cutkosky2019momentum} with clipping + normalization, while \cite{armacki2026tight} demonstrate order of magnitude faster tail decay of clipped SGD. A related line of work studies symmetric heavy-tailed noise, dating back to \cite{polyak-adaptive-estimation}. Among them, \cite{bernstein2019signsgd_iclr,chen2020understanding} analyze IE guarantees of sign and clipped SGD, showing $\mathcal{O}(\epsilon^{-4})$ oracle complexity, while \cite{armacki2025high,armacki2024_ldp+mse,armacki2026sharp} study a nonlinear SGD framework that includes sign, normalization and clipping, among other results establishing $\bigO(\epsilon^{-4})$ IE and HP oracle complexity. Finally, a recent line of work shows that vanilla SGD can converge under heavy-tailed noise without any nonlinear methods, e.g., \citep{fatkhullin2025can,liu2026can}. 

\textbf{Decentralized Settings.} Nonlinear methods have a long history of use in decentralized settings, including applications such as estimation, adversarial learning and differential privacy, e.g., \citep{kar_estimaton,chen2019resilient,yu-secure,zhu2025dp-lin}. However, only a few works study convergence of nonlinear decentralized methods under heavy-tailed noise. In particular, \cite{clipped-dsgd1} study HP convergence of clipped \dsgd in online dynamic optimization under $p$-th moment noise, requiring bounded gradients and achieving $\wbigO\Big(\epsilon^{\frac{4p}{1-p}}\Big)$ per-agent oracle complexity.\footnote{Per-agent complexity is the number of \sfo calls needed per agent to reach an $\epsilon$-stationary point, implying a total complexity of $n$ $\times$ the per-agent one.} \cite{li2025porter} study communication-efficient learning under bounded variance noise ($p = 2$), requiring a strong heterogeneity bound (see Subsection \ref{subsec:assump} and Footnote \ref{fnote:het} ahead) and using a complex method (see Table \ref{tab:comp}) to achieve $\bigO\big(\epsilon^{-4}\big)$ IE oracle complexity. \cite{yu2026decentralized} study convergence of gradient tracking (GT) with normalization and local momentum under $p$-th moment noise, achieving $\bigO\Big(\epsilon^{\frac{3p-2}{1-p}}\Big)$ IE oracle complexity, while \cite{luoluo2026near} similarly study GT with normalization, using large mini-batches and accelerated gossip (i.e., multiple communication rounds per iteration) to achieve the optimal $\bigO\Big(\epsilon^{\frac{3p-2}{1-p}} / n\Big)$ per-agent IE oracle complexity.\footnote{Optimal in the sense that the total complexity matches the optimal complexity achieved in centralized settings.} All of the said works require either strong heterogeneity bounds, or algorithmic modifications like local-momentum, mini-batches and accelerated gossip, with only \citep{luoluo2026near} achieving the optimal complexity and linear speed-up in the number of agents.\footnote{While the $\bigO\big(\epsilon^{-4}\big)$ per-agent complexity in \cite{li2025porter} matches the optimal centralized complexity under noise with bounded variance, it implies a sub-optimal $\bigO\big(n\epsilon^{-4}\big)$ total complexity. Similarly, while the per-agent complexity in \citep{yu2026decentralized} matches the optimal centralized complexity, the total complexity is sub-optimal by a factor of $n$, due to not achieving linear speed-up.} The need for algorithmic modifications when using nonlinear methods is further supported in \citep[Proposition 1]{yu2026decentralized}, where it is shown that normalized \dsgd with noiseless gradients can fail to converge even on a simple quadratic cost. These observations naturally led us to ask:
\begin{center}
    \emph{Can a standard decentralized method with a nonlinear map achieve optimal complexity under heavy-tailed noise, without further algorithmic modification?}
\end{center}

To answer this question, we revisit the guarantees of \dsgd using gradient clipping. In Figure \ref{fig:normalization_clipping_example} we show that clipping indeed fixes the non-convergence induced by normalization, while Table \ref{tab:comp} provides a detailed comparison of nonlinear decentralized methods that converge to stationarity, giving a positive answer to the question above, in the form of clipped \dsgd.

\begin{figure}[t]
\centering
\includegraphics[width=0.5\linewidth]{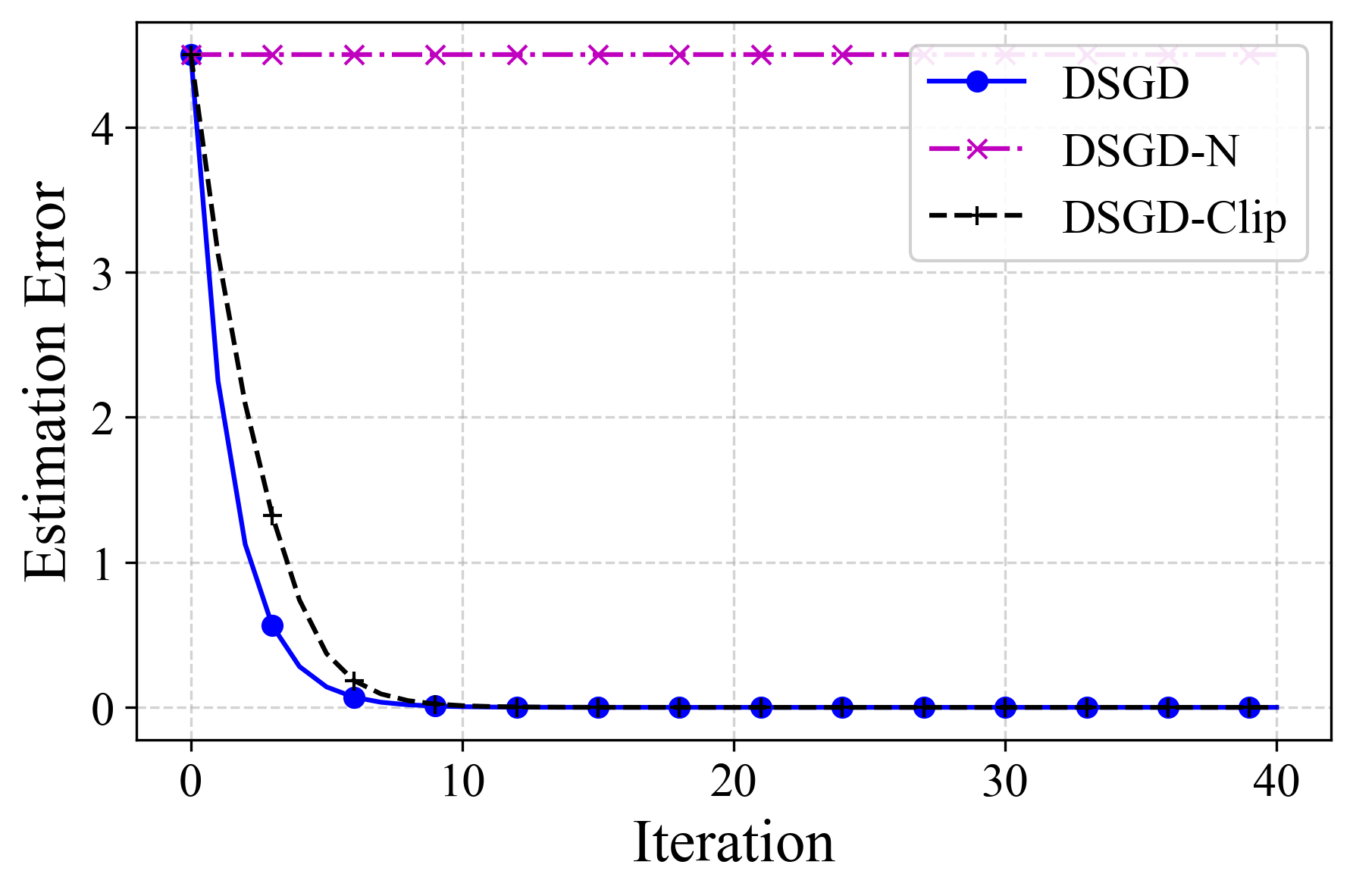}
\caption{Convergence of vanilla, normalized and clipped \dsgd on a quadratic cost. While normalization gets stuck at the initial point and fails to converge, both vanilla and clipped \dsgd converge to the true minima. For more details, see Supplement \ref{sup:num}.}
\label{fig:normalization_clipping_example}
\end{figure}

\subsection{Contributions} 

Our full contributions can be summarized as follows.

\begin{itemize}[leftmargin=*]
    \item We study convergence of decentralized methods using a nonlinearity under heavy-tailed noise, showing that baseline \dsgd with gradient clipping achieves \emph{optimal oracle complexity} and \emph{linear speed-up}, under standard assumptions (see Subsection \ref{subsec:assump}) and without any additional algorithmic modifications.

    \item Compared to results on clipping in decentralized settings, we relax the strong heterogeneity bounds and, to our knowledge, show the first linear speed-up result for clipped \dsgd. Compared to works using normalization, we show that a simple decentralized method can achieve optimal complexity with an appropriately chosen nonlinearity, establishing important insights on the effects of clipping and normalization in decentralized settings (see Subsection \ref{subsec:guarantees}).

    \item We extensively test our theory on several problems, including decentralized transformer training on real data, showing that: $(i)$ clipping improves performance under heavy tails, $(ii)$ clipped \dsgd achieves linear speed-up; $(iii)$ clipped \dsgd regularly performs on par with or outperforms normalized \dsgd. 
\end{itemize}

\textbf{Novelty.} Our results are facilitated by several novelties and improvements. First, we relax the strong heterogeneity bounds in prior works via a sharp analysis and careful choice of clipping radius. Next, a novel consensus bound in Lemma \ref{lm:consensus-bound} and a novel coupling of step-size and clipping radius (see Subsection \ref{subsec:guarantees}) ensure that the network only affects higher-order terms. Finally, we achieve linear speed-up by combining these results, an improved analysis, improved choice of step-size and clipping radius, and Lemma \ref{lm:noise-behavior}, which shows that averaging clipped vectors reduces the variance. 

\textbf{Paper Organization.} The rest of the paper is organized as follows. Section \ref{sec:prelim} gives the preliminaries, Section \ref{sec:main} states the main results, Section \ref{sec:num} presents experiments, while Section \ref{sec:conc} concludes the paper. Supplement contains results omitted from the main body. The rest of this section introduces some notation.

\textbf{Notation.} $\N$, $\R$ and $\R^d$ denote positive integers, real numbers and $d$-dimensional vectors. For $m \in \N$, $[m]$ denotes positive integers up to and including $m$. $\langle \cdot,\cdot\rangle$ is the Euclidean inner product, while $\|\cdot\|$ is the induced vector/matrix norm. Subscripts denote agents and superscripts denote the iteration, e.g., $\xit$ refers to agent $i$'s model in iteration $t$. The ``big O'' notation $\bigO(\cdot)$ hides global constants, while $\wbigO(\cdot)$ additionally hides poly-logarithmic factors, unless stated otherwise.

\section{Preliminaries}\label{sec:prelim}

In this section we outline the preliminaries. Subsection \ref{subsec:problem} introduces the problem, while Subsection \ref{subsec:method} presents the clipped \dsgd method.

\subsection{Problem Setup}\label{subsec:problem}

Formally, the problem we aim to solve is given by
\begin{equation}\label{eq:problem}
    \argmin_{x \in \R^d}\bigg\{ f(x) = \frac{1}{n}\sum_{i \in [n]} f_i(x)\bigg\},
\end{equation} where $x \in \R^d$ represents model parameters, $n \geq 2$ is the number of agents, while $f_i: \R^d \mapsto \R$ is the cost of agent $i$. The communication network is modeled via a connected graph $G = (V,E)$, where $V = [n]$ is the set of agents, while $E \subseteq V \times V$ is the set of communication links. The graph $G$ induces a weight matrix $W \in \R^{n \times n}$, whose $(i,j)$-th entry $[W]_{ij} = w_{ij}$ is strictly positive if $\{i,j\} \in E$, otherwise $w_{ij} = 0$.

As discussed in the introduction, the goal is to solve \eqref{eq:problem} in the presence of heavy-tailed gradient noise. To that end, we assume that agents have access to a Stochastic First-order Oracle (\sfo), which, when queried by agent $i \in [n]$ with input $x \in \R^d$, returns $g_i \in \R^d$, an unbiased estimator of $\nabla f_i(x)$. The \sfo model is widely used and subsumes both batch (i.e., offline) and streaming (i.e., online) learning settings.

\subsection{Proposed Method}\label{subsec:method}

To solve \eqref{eq:problem} under heavy-tailed noise, we use the clipped \dsgd method, which consists of the following steps. At the beginning agents choose a fixed step-size $\alpha > 0$ and clipping radius $\gamma > 0$, and initial models $x^1_i \in \R^d$, $i \in [n]$. In iteration $t \geq 1$, each agent $i \in [n]$ queries the \sfo with their current model $\xit$ and receives $\git$. Agents then clip their local stochastic gradient and update the local model via the rule
\begin{equation}\label{eq:gt-model-update}
    \xitp = \sum_{j \in \calN_i}w_{ij}\big(x^{t}_j - \alpha\tgjt\big),
\end{equation} where $\calN_i \coloneqq \{j \in V: \{i,j\} \in E\} \cup \{i\}$ are the agents that can communicate with agent $i$ (itself included), while $\tgjt = \clip(g_j^t) = \min\Big\{1,\frac{\gamma}{\|g_j^t\|} \Big\}g_j^t$ is the clipped estimator. The procedure is summarized in Algorithm \ref{alg:dsgd}. The variant of clipped \dsgd studied in our work uses the adapt-then-combine (i.e., diffusion) approach, e.g., \citep{diffusion1,diffusion3} and differs from the version in \cite{clipped-dsgd1}, which uses the combine-then-adapt approach, e.g., \citep{nedic-subgrad,consensus+innovation2}.

\begin{algorithm}[tb]
\caption{Clipped \dsgd}
\label{alg:dsgd}
\begin{algorithmic}[1]
   \REQUIRE{Model initialization $x^1_i \in \R^{d}$, step-size $\alpha > 0$ and clipping radius $\gamma > 0$;}
   \FOR{$t = 1,2,\ldots$, each agent $i \in [n]$ in parallel}
        \STATE Query the \sfo with $\xit$ to obtain $\git$;  

        \STATE Perform clipping: $\tgit = \min\Big\{1,\frac{\gamma}{\|\git\|}\Big\}\git$;

        \STATE Update the model: $\xitp = \sum_{j \in \calN_i}w_{ij}\big(x_j^t - \alpha\tgjt \big)$;
    \ENDFOR
\end{algorithmic}
\end{algorithm}

\section{Main Results}\label{sec:main}

In this section we provide the main results. Subsection \ref{subsec:assump} states the assumptions, while Subsection \ref{subsec:guarantees} presents the guarantees and discusses the results.

\subsection{Assumptions}\label{subsec:assump}

We start with the assumption on the weight matrix.

\begin{assumption}\label{asmpt:network}
    The weight matrix $W \in \R^{n \times n}$ is primitive, doubly stochastic, and it holds that $\|W - \frac{1}{n}\mathbf{1}_n\mathbf{1}_n^\top\| < 1$, where $\mathbf{1}_n \in \R^n$ is the vector of all ones.
\end{assumption}

Assumption \ref{asmpt:network} covers weight matrices induced by undirected graphs and a class of strongly-connected digraphs with doubly stochastic weights, e.g., \cite{ran-vr}. Denote by $\lambda \coloneqq \|W - J\|$ , where $J \coloneqq \frac{1}{n}\mathbf{1}_n\mathbf{1}_n^\top \in \R^{n \times n}$ is the ideal communication matrix. Assumption \ref{asmpt:network} implies that $\lambda \in [0,1)$, e.g., \cite{Horn_Johnson_2012}, and the quantity $1 - \lambda$ is known as the \emph{network spectral gap}. Next, we make a technical assumption.

\begin{assumption}\label{asmpt:initialization}
    Agents' initial models $x_i^1 \in \R^d$, $i \in [n]$, are deterministic quantities. Moreover, agents share the initialization, i.e., $x_i^1 = x_j^1$, for all $i, j \in [n]$.
\end{assumption}

Assumption \ref{asmpt:initialization} allows agents to initialize their model at any deterministic and jointly selected vector. The requirement for shared initialization is made to simplify the analysis and can be realized, e.g., by running a consensus algorithm before the start of training. The next assumption states the conditions on the cost functions.

\begin{assumption}\label{asmpt:cost}
    The global cost is bounded from below and agents' costs have $L$-Lipschitz gradients, i.e., $\fs \coloneqq \inf_{x \in \R^d}f(x) > -\infty$ and for all $i \in [n]$ and $x, y\in \R^d$, we have $\|\nabla f_i(x) - \nabla f_i(y)\| \leq L\|x - y\|$.
\end{assumption}

Assumption \ref{asmpt:cost} is standard for non-convex costs, e.g., \cite{ghadimi2013stochastic}. Further, it is known that \dsgd requires a heterogeneity bound to ensure convergence under non-convex costs, e.g., \cite{NIPS2017_f7552665,pmlr-v119-koloskova20a,vlaski-dsgd-nonconv-1,vlaski-dsgd-nonconv-2}. To that end, we introduce the following condition.

\begin{assumption}\label{asmpt:bdd-het}
    There exist constants $A,B \geq 0$ (with at least one strictly positive), such that for all $x \in \R^d$ and $i \in [n]$, we have $\|\nabla f_i(x)\|^2 \leq A^2 + B^2\|\nabla f(x)\|^2$.
\end{assumption}

Assumption \ref{asmpt:bdd-het} is a standard heterogeneity condition, e.g., \citep{vlaski-dsgd-nonconv-1,vlaski-dsgd-nonconv-2,armacki2026dsgd}. It is strictly weaker than the heterogeneity bounds used in other works on decentralized clipping methods, e.g., \cite{clipped-dsgd1,li2025porter},\footnote{\label{fnote:het}\cite{clipped-dsgd1} use uniformly bounded gradients, implying Assumption \ref{asmpt:bdd-het} with $B = 0$. \cite{li2025porter} use a condition that implies Assumption \ref{asmpt:bdd-het} with $A = 0$ and $B = \frac{13}{12}$, effectively requiring that every stationary point of $f$ also be a stationary point for each $f_i$, $i \in [n]$.} however, slightly stronger than the averaged heterogeneity condition used in, e.g., \citep{pmlr-v119-koloskova20a}. For a discussion on the necessity of this condition, see Supplement \ref{sup:het}. Prior to stating the noise assumption, denote by $z_i^t \coloneqq g_i^t - \nabla f_i(x_i^t)$ the gradient noise of agent $i$ at time $t$, and let $\Ft \coloneqq \sigma\big(\{\{x_i^1\}_{i \in [n]},\ldots,\{x_i^t\}_{i \in [n]}\}\big)$ be the $\sigma$-algebra induced by agents' models up to $t$.

\begin{assumption}\label{asmpt:noise}
    At any time $t \geq 1$, the stochastic quantities satisfy the following.
    \vspace{-0.5em}
    \begin{enumerate}[leftmargin=*, label=\arabic*.]\setlength\itemsep{0.25em}
        \item The random samples $\{\xi^k_i \}_{i \in [n],k \in [t]}$ are independent across agents and iterations. 
        
        \item The noise at each agent is conditionally zero-mean, i.e., $\E[\zit \: \vert \: \Ft ] = 0$, for all $i \in [n]$.

        \item The noise at each agent has bounded $p$-th moment, $p \in (1,2]$, i.e., $\E\big[\|\zit\|^p \: \vert \: \Ft \big] \leq \sigma_i^p$, for all $i \in [n]$.
    \end{enumerate}
\end{assumption}
    
The first two conditions in Assumption \ref{asmpt:noise} are standard, while the third requires the noise to have a bounded $p$-th moment for some $p \in (1,2]$, and is widely used when studying heavy-tailed noise, e.g., \citep{zhang2020adaptive,sadiev2023highprobability,hubler2025normalization}.

\subsection{Theoretical Guarantees}\label{subsec:guarantees}

We start by defining some useful notation. Let $\onft \coloneqq \frac{1}{n}\sum_{i \in [n]}\nabla f_i(\xit) $ and $\ogt \coloneqq \frac{1}{n}\sum_{i \in [n]}\git$ denote the averaged gradient and stochastic gradient, respectively, and let $\ozt \coloneqq \ogt - \onft = \frac{1}{n}\sum_{i \in [n]}\zit$ be the averaged noise. Next, denote by $\hgt \coloneqq \frac{1}{n}\sum_{i \in [n]}\tgit$ the averaged clipped estimator, and define the averaged unbiased and biased components stemming from clipping as $\hgtu \coloneqq \hgt - \E[\hgt \: \vert \: \Ft]$ and $\hgtb \coloneqq \E[\hgt \: \vert \: \Ft] - \onft$. Similarly, let $\tgitu \coloneqq \tgit - \E[\tgit \: \vert \: \Ft]$ and $\tgitb = \E[\tgit \: \vert \: \Ft] - \nabla f_i(\xit)$ be the local unbiased and biased components. It can be seen that $\hgt - \onft = \hgtu + \hgtb$ and $\tgit - \nabla f_i(\xit) = \tgitu + \tgitb$, $i \in [n]$. We next state an important result.

\begin{lemma}\label{lm:noise-behavior}
    Let $X_1,\ldots,X_n \in \R^d$ be independent random vectors and let $\widetilde{X}_i = \clip(X_i)$, $i \in [n]$, with $\widetilde{X} = \frac{1}{n}\sum_{i \in [n]}\widetilde{X}_i$. If for some $p \in (1,2]$ and all $i \in [n]$ we have $\E\|X_i - \E[X_i]\|^p \leq \sigma_i^p$ and $\|\E[X_i]\| \leq \nicefrac{\gamma}{2}$, then
    \begin{align*}
        \E\|\widetilde{X} - \E[\widetilde{X}]\|^2 \leq \frac{18\gamma^{2-p}}{n^2}\sum_{i \in [n]}\sigma_i^p.
    \end{align*}
\end{lemma}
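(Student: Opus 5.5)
The plan is to reduce the variance of the average to a sum of per-agent variances using independence. Then each per-agent variance is controlled by a pointwise inequality that uses only the geometry of clipping.

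\textbf{Step 1 (independence).} The vectors $\widetilde{X}_i = \clip(X_i)$ are independent, being measurable functions of independent vectors, and each is bounded by $\gamma$, so all moments exist. The cross terms $\E\langle \widetilde{X}_i - \E[\widetilde{X}_i], \widetilde{X}_j - \E[\widetilde{X}_j]\rangle$ therefore vanish for $i \neq j$, giving
\begin{align*}
\E\|\widetilde{X} - \E[\widetilde{X}]\|^2 = \frac{1}{n^2}\sum_{i \in [n]} \E\|\widetilde{X}_i - \E[\widetilde{X}_i]\|^2 .
\end{align*}
It then suffices to show $\E\|\widetilde{X}_i - \E[\widetilde{X}_i]\|^2 \leq 18\gamma^{2-p}\sigma_i^p$ for each $i$.

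\textbf{Step 2 (recentering).} For any deterministic $c$, the variance satisfies $\E\|\widetilde{X}_i - \E[\widetilde{X}_i]\|^2 \leq \E\|\widetilde{X}_i - c\|^2$. I take $c = \mu_i \coloneqq \E[X_i]$. Since $\|\mu_i\| \leq \gamma/2 \leq \gamma$, clipping leaves it unchanged: $\clip(\mu_i) = \mu_i$.

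\textbf{Step 3 (pointwise bound, the key step).} The map $\clip(\cdot) = \min\{1,\gamma/\|\cdot\|\}(\cdot)$ is exactly the Euclidean projection onto the closed ball of radius $\gamma$, hence $1$-Lipschitz. This gives
\begin{align*}
\|\widetilde{X}_i - \mu_i\| = \|\clip(X_i) - \clip(\mu_i)\| \leq \|X_i - \mu_i\|.
\end{align*}
Separately, by the triangle inequality,
\begin{align*}
\|\widetilde{X}_i - \mu_i\| \leq \gamma + \gamma/2 = 3\gamma/2 .
\end{align*}
Combining the two, $\|\widetilde{X}_i - \mu_i\| \leq \min\{a,b\}$ with $a = \|X_i - \mu_i\|$ and $b = 3\gamma/2$. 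For $p \in (1,2]$ we have the interpolation $\min\{a,b\}^2 \leq a^p b^{2-p}$. To see it, if $a \leq b$ then $a^2 = a^p a^{2-p} \leq a^p b^{2-p}$; otherwise $b^2 = b^p b^{2-p} \leq a^p b^{2-p}$. Hence
\begin{align*}
\|\widetilde{X}_i - \mu_i\|^2 \leq (3/2)^{2-p}\gamma^{2-p}\|X_i - \mu_i\|^p .
\end{align*}

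\textbf{Step 4 (conclude).} Taking expectations and using $\E\|X_i - \mu_i\|^p \leq \sigma_i^p$ gives
\begin{align*}
\E\|\widetilde{X}_i - \E[\widetilde{X}_i]\|^2 \leq (3/2)^{2-p}\gamma^{2-p}\sigma_i^p \leq \tfrac{3}{2}\gamma^{2-p}\sigma_i^p,
\end{align*}
where the last inequality uses $2-p \in [0,1)$. Since $\tfrac{3}{2} \leq 18$, combining with Step 1 proves the stated bound, with room to spare in the constant.

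The only delicate point is Step 3. The naive alternative, splitting into the events $\|X_i\| \leq \gamma$ and $\|X_i\| > \gamma$ and controlling the tail via Markov's inequality, would produce a larger constant of the order of the stated $18$. The projection/nonexpansiveness viewpoint avoids that case analysis entirely. The hypothesis $\|\E[X_i]\| \leq \gamma/2$ is used only to ensure that $\mu_i$ lies in the clipping ball, so that clipping fixes it.
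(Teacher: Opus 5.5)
Your proof is correct. Its top-level structure is the paper's: the paper also uses independence to kill the cross terms and reduce to $\frac{1}{n^2}\sum_{i\in[n]}\E\|\widetilde{X}_i-\E[\widetilde{X}_i]\|^2$.

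The difference is in the per-agent bound. The paper does not prove it; it imports it, with constant $18$, from Lemma 5.1 of Sadiev et al. That lemma is proved there by splitting on whether clipping is active and controlling the clipped event with Markov's inequality. You instead derive it directly from two facts: the projection onto the $\gamma$-ball is nonexpansive, and $\min\{a,b\}^2\le a^p b^{2-p}$. Your route is self-contained and sharper, giving $(3/2)^{2-p}\gamma^{2-p}\sigma_i^p$ in place of $18\gamma^{2-p}\sigma_i^p$.

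It also shows slightly more than you claim. Recentering at the deterministic point $\clip(\mu_i)$ instead of $\mu_i$ gives $\|\widetilde{X}_i-\clip(\mu_i)\|\le\min\{\|X_i-\mu_i\|,2\gamma\}$. The variance bound $2^{2-p}\gamma^{2-p}\sigma_i^p$ therefore holds with no condition on $\|\E[X_i]\|$ at all.

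What the paper's citation buys is that the same lemma also supplies the bias bound $\|\E[\widetilde{X}_i]-\E[X_i]\|\le 4\sigma_i^p/\gamma^{p-1}$. That bound genuinely needs $\|\E[X_i]\|\le\gamma/2$, and it is used alongside the variance bound throughout the high-probability analysis. So your sharper constant and weaker hypothesis tidy up this lemma but leave the structure of the main theorems unchanged.
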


Denoting by $\sigma^p \coloneqq \frac{1}{n}\sum_{i \in [n]}\sigma_i^p$, Lemma \ref{lm:noise-behavior} tells us that the variance of the averaged clipped estimator scales as $\bigO\big(\frac{\sigma^p}{n}\big)$, and is crucial in establishing linear speed-up. A similar result appeared in the client-server setup in \cite[Lemma B.3]{pmlr-v235-gorbunov24a}, provided for illustrative purposes while using a different approach to show linear speed-up. On the other hand, Lemma \ref{lm:noise-behavior} is a key result for achieving linear speed-up in our work. In Supplement \ref{sup:proof} we provide an extended version of Lemma \ref{lm:noise-behavior}. Next, define the averaged model $\oxt \coloneqq \frac{1}{n}\sum_{i \in [n]}\xit$. We then have the following descent inequality.

\begin{lemma}\label{lm:descent-inequality}
    If Assumption \ref{asmpt:cost} holds and $\alpha \leq \frac{1}{2L}$, then for any $t \geq 1$
    \begin{align*}
        f(&\oxtp) \leq f(\oxt) - \frac{\alpha}{4}\|\nabla f(\oxt)\|^2 - \alpha\langle \nabla f(\oxt), \hgtu\rangle \\ 
        &+ 2\alpha^2L\|\hgtu\|^2 + 2\alpha\|\hgtb\|^2 + \frac{\alpha L^2}{2n}\sum_{i \in [n]}\|\xit - \oxt\|^2. 
    \end{align*}
\end{lemma}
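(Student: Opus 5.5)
The plan is to first establish the dynamics of the averaged model. Since $W$ is doubly stochastic (Assumption \ref{asmpt:network}), averaging the update \eqref{eq:gt-model-update} over $i \in [n]$ gives
\begin{equation*}
\oxtp = \oxt - \alpha\hgt .
\end{equation*}
Using the decomposition introduced before Lemma \ref{lm:noise-behavior}, we then write
\begin{equation*}
\hgt = \nabla f(\oxt) + \big(\onft - \nabla f(\oxt)\big) + \hgtb + \hgtu .
\end{equation*}
Because $f = \frac{1}{n}\sum_i f_i$ has $L$-Lipschitz gradient (Assumption \ref{asmpt:cost}), the standard descent lemma yields
\begin{equation*}
f(\oxtp) \leq f(\oxt) - \alpha\langle \nabla f(\oxt), \hgt\rangle + \frac{\alpha^2 L}{2}\|\hgt\|^2 .
\end{equation*}

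Next I will expand the two terms on the right. The inner product splits into
\begin{equation*}
-\alpha\|\nabla f(\oxt)\|^2 \;-\; \alpha\langle \nabla f(\oxt), \hgtu\rangle \;-\; \alpha\langle \nabla f(\oxt), e^t + \hgtb\rangle ,
\end{equation*}
where $e^t \coloneqq \onft - \nabla f(\oxt)$. The $\hgtu$ cross term is kept exactly as in the statement. For the last term, Young's inequality gives the bound $\frac{\alpha}{4}\|\nabla f(\oxt)\|^2 + \alpha\|e^t + \hgtb\|^2$, and then $\|e^t + \hgtb\|^2 \leq 2\|e^t\|^2 + 2\|\hgtb\|^2$. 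This is what produces the coefficient $2\alpha\|\hgtb\|^2$.

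For the quadratic term I will split
\begin{equation*}
\|\hgt\|^2 \leq 2\|\hgtu\|^2 + 2\|\nabla f(\oxt) + e^t + \hgtb\|^2 \leq 2\|\hgtu\|^2 + 4\|\nabla f(\oxt)\|^2 + 8\|e^t\|^2 + 8\|\hgtb\|^2 .
\end{equation*}
Multiplying by $\frac{\alpha^2 L}{2}$, the $\hgtu$ contribution is $\alpha^2 L\|\hgtu\|^2 \leq 2\alpha^2 L\|\hgtu\|^2$. The remaining pieces are absorbed using $\alpha L \leq \frac12$:
\begin{itemize}
\item $2\alpha^2 L\|\nabla f(\oxt)\|^2 \leq \alpha\|\nabla f(\oxt)\|^2$;
\item $4\alpha^2 L\|e^t\|^2 \leq 2\alpha\|e^t\|^2$;
\item $4\alpha^2 L\|\hgtb\|^2 \leq 2\alpha\|\hgtb\|^2$.
\end{itemize}

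These constants are too loose to land on $-\frac{\alpha}{4}$ and $2\alpha\|\hgtb\|^2$, so the accounting of constants is the main obstacle. Two adjustments fix it. First, I will split the quadratic term asymmetrically, for example
\begin{equation*}
\|\hgt\|^2 \leq 2\|\hgtu\|^2 + 2\|\nabla f(\oxt) + e^t + \hgtb\|^2 ,
\end{equation*}
and then bound $\|\nabla f(\oxt) + e^t + \hgtb\|^2 \leq (1+c)\|\nabla f(\oxt)\|^2 + (1 + 1/c)\|e^t + \hgtb\|^2$ for a suitable $c$. Second, I will use Young's inequality in the inner product with a weight other than $\frac14$, tuned so that:
\begin{itemize}
\item the total coefficient on $\|\nabla f(\oxt)\|^2$ is at most $-\frac{\alpha}{4}$;
\item the total coefficient on $\|\hgtb\|^2$ is at most $2\alpha$;
\item the total coefficient on $\|e^t\|^2$ is at most $\frac{\alpha}{2}$.
\end{itemize}

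The last requirement is needed because of the consensus step. By Jensen's inequality and $L$-smoothness of each $f_i$,
\begin{equation*}
\|e^t\|^2 \leq \frac{1}{n}\sum_{i}\|\nabla f_i(\xit) - \nabla f_i(\oxt)\|^2 \leq \frac{L^2}{n}\sum_i \|\xit - \oxt\|^2 ,
\end{equation*}
so a coefficient of $\frac{\alpha}{2}$ on $\|e^t\|^2$ gives exactly the term $\frac{\alpha L^2}{2n}\sum_{i}\|\xit - \oxt\|^2$.

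If the constants still do not close, I would fall back on a cleaner route: rewrite the inner product using $-2\langle a, b\rangle = \|a-b\|^2 - \|a\|^2 - \|b\|^2$ applied to $a = \nabla f(\oxt)$ and $b = \nabla f(\oxt) + e^t + \hgtb$. This produces the negative term $-\frac{\alpha}{2}\|b\|^2$, which cancels the $\alpha^2 L\|b\|^2$ from the quadratic term since $\alpha L \leq \frac12$. What remains is $-\frac{\alpha}{2}\|\nabla f(\oxt)\|^2 + \frac{\alpha}{2}\|e^t + \hgtb\|^2$. After splitting $\|e^t + \hgtb\|^2$, this leaves ample slack to reach $-\frac{\alpha}{4}$ and the stated coefficients.
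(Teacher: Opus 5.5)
\emph{Verdict: there is a genuine gap.} Neither of your two routes reaches the stated coefficient $\frac{\alpha}{2}$ on $\|e^t\|^2$, which is what produces $\frac{\alpha L^2}{2n}\sum_i\|x_i^t - \overline{x}^t\|^2$. Your averaged recursion $\overline{x}^{t+1} = \overline{x}^t - \alpha\widehat{g}^t$, the descent lemma, and the bound $\|e^t\|^2 \leq \frac{L^2}{n}\sum_i\|x_i^t - \overline{x}^t\|^2$ are fine.

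\textbf{Primary route.} No choice of Young weights can close it at the endpoint $\alpha L = \frac12$, which the lemma allows.
You start from $-\alpha\|\nabla f(\overline{x}^t)\|^2$ and must end at $-\frac{\alpha}{4}$. The quadratic term also charges $\|\nabla f(\overline{x}^t)\|^2$, so Young's inequality on $-\alpha\langle \nabla f(\overline{x}^t), e^t\rangle$ may spend strictly less than $\frac{3\alpha}{4}\|\nabla f(\overline{x}^t)\|^2$. It therefore costs more than $\frac{\alpha}{3}\|e^t\|^2$.
On top of that, any Cauchy--Schwarz split of $\frac{\alpha^2 L}{2}\|\widehat{g}^t\|^2$ that keeps the $\|\widehat{g}^t_u\|^2$ coefficient at most $2\alpha^2 L$ leaves weight at least $\frac{4}{3}\cdot\frac{\alpha^2 L}{2} = \frac{\alpha}{3}$ on the $e^t$ part. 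The total exceeds $\frac{2\alpha}{3}$.

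\textbf{Fallback.} It fails for a different reason. You put $\widehat{g}^t_b$ inside the polarized vector $b$, so the identity leaves $\frac{\alpha}{2}\|e^t + \widehat{g}^t_b\|^2$. Splitting this as $\frac{\alpha}{2}(1+c)\|e^t\|^2 + \frac{\alpha}{2}(1+c^{-1})\|\widehat{g}^t_b\|^2$ within the budget $2\alpha$ on $\|\widehat{g}^t_b\|^2$ forces $c \geq \frac13$. The best you get is therefore $\frac{2\alpha L^2}{3n}\sum_i\|x_i^t - \overline{x}^t\|^2$, and only $\frac{\alpha L^2}{n}$ with the symmetric split.
The extra $\frac{\alpha}{4}\|\nabla f(\overline{x}^t)\|^2$ you have there is slack in the wrong term. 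After polarization, the only remaining cross term involving $\nabla f(\overline{x}^t)$ is the $\widehat{g}^t_u$ one, which must be kept exactly. So your claim of ``ample slack to reach the stated coefficients'' is false.
What you would actually prove is a variant with a worse consensus constant. That is harmless up to constants downstream, but it is not the lemma as stated.

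\textbf{The fix.} Keep the bias out of the polarization and spend the slack on it instead; this is the route the paper takes.
\begin{itemize}
\item Write $\widehat{g}^t = \overline{\nabla} f^t + \widehat{g}^t_u + \widehat{g}^t_b$.
\item Apply $-\langle a,b\rangle = \frac12\big(\|a-b\|^2 - \|a\|^2 - \|b\|^2\big)$ only to $\langle \nabla f(\overline{x}^t), \overline{\nabla} f^t\rangle$. This gives exactly $-\frac{\alpha}{2}\|\nabla f(\overline{x}^t)\|^2 - \frac{\alpha}{2}\|\overline{\nabla} f^t\|^2 + \frac{\alpha}{2}\|e^t\|^2$, with no loss.
\item Bound $\|\widehat{g}^t\|^2 \leq 2\|\widehat{g}^t_u + \widehat{g}^t_b\|^2 + 2\|\overline{\nabla} f^t\|^2$. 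When $\alpha \leq \frac{1}{2L}$, the resulting $\alpha^2 L\|\overline{\nabla} f^t\|^2$ is cancelled by $-\frac{\alpha}{2}\|\overline{\nabla} f^t\|^2$.
\item Handle the bias cross term with the leftover slack: $-\alpha\langle \nabla f(\overline{x}^t), \widehat{g}^t_b\rangle \leq \frac{\alpha}{4}\|\nabla f(\overline{x}^t)\|^2 + \alpha\|\widehat{g}^t_b\|^2$.
\end{itemize}
The coefficients are then $-\frac{\alpha}{4}$ on $\|\nabla f(\overline{x}^t)\|^2$, $2\alpha^2 L$ on $\|\widehat{g}^t_u\|^2$, $\alpha(1 + 2\alpha L) \leq 2\alpha$ on $\|\widehat{g}^t_b\|^2$, and exactly $\frac{\alpha}{2}$ on $\|e^t\|^2$.
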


Lemma \ref{lm:descent-inequality} is the starting point in our analysis. In addition to the standard terms appearing in centralized clipping analysis, e.g., \citep{sadiev2023highprobability,nguyen2023improved}, the inequality contains the consensus gap term, which we aim to bound next. We start with a simple but useful bound on the consensus gap.

\begin{lemma}\label{lm:consensus-crude}
    If Assumptions \ref{asmpt:network} and \ref{asmpt:initialization} hold, then for all $i \in [n]$ and any $t \geq 1$ 
    \begin{align*}
        &\|\xitp - \oxtp\| \leq \frac{\alpha\gamma\lambda\sqrt{n}}{1-\lambda}.
    \end{align*}
\end{lemma}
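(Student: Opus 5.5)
We stack the iterates and work in matrix form. Let $X^t \in \R^{n \times d}$ have rows $(\xit)^\top$, and let $\widetilde{G}^t$ have rows $(\tgit)^\top$. The update in Algorithm \ref{alg:dsgd} then reads $X^{t+1} = W(X^t - \alpha \widetilde{G}^t)$. Because $W$ is doubly stochastic, $JW = J$. Hence the averaged matrix $JX^{t+1}$, whose rows all equal $(\oxtp)^\top$, satisfies $JX^{t+1} = J(X^t - \alpha\widetilde{G}^t)$. Subtracting the two recursions gives
\begin{align*}
(I-J)X^{t+1} = (W - J)(X^t - \alpha\widetilde{G}^t) = (W-J)(I-J)X^t - \alpha (W-J)\widetilde{G}^t,
\end{align*}
where the second equality uses $(W-J)J = 0$, which holds since $W\mathbf{1}_n = \mathbf{1}_n$.

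Next we unroll this recursion down to $t = 1$. Assumption \ref{asmpt:initialization} gives $(I-J)X^1 = 0$. We also use $(W-J)^k = W^k - J$, which follows from $WJ = JW = J$ and $J^2 = J$. Together these give
\begin{align*}
(I-J)X^{t+1} = -\alpha \sum_{k=1}^{t} (W-J)^{t-k+1}\widetilde{G}^k.
\end{align*}

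We bound this in the spectral norm. The row of $(I-J)X^{t+1}$ indexed by $i$ is $(\xitp - \oxtp)^\top$, so each such row has norm at most the spectral norm of the whole matrix. Submultiplicativity and $\|W-J\| = \lambda$ give $\|(W-J)^{m}\| \leq \lambda^m$. For the clipped gradients, each row of $\widetilde{G}^k$ has norm at most $\gamma$ by definition of clipping. Therefore $\|\widetilde{G}^k\| \leq \|\widetilde{G}^k\|_F \leq \gamma\sqrt{n}$.

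Combining these bounds, we get
\begin{align*}
\|\xitp - \oxtp\| \leq \alpha\gamma\sqrt{n}\sum_{m=1}^{t}\lambda^m \leq \frac{\alpha\gamma\lambda\sqrt{n}}{1-\lambda},
\end{align*}
where the last step sums the geometric series using $\lambda \in [0,1)$. The $\sqrt{n}$ factor comes from the Frobenius bound on the stacked clipped gradients.

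There is no real obstacle here. The only care needed is to keep the leading $\lambda$, so that the exponent starts at $m = 1$. This works because the $W$ applied after the adapt step produces $(W-J)$ rather than $(I-J)$ in front of $\widetilde{G}^t$. The same observation shows why $(I-J)X^t$ alone can be factored through $(W-J)$.
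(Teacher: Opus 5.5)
Your proof is correct and follows essentially the same route as the paper: you project out the average using $(W-J)J=0$ and $\|W-J\|=\lambda$, bound the stacked clipped gradients by $\gamma\sqrt{n}$, use the shared initialization, sum a geometric series, and compare one row's norm to the whole stacked norm. The only difference is cosmetic. You unroll the matrix recursion before taking norms, whereas the paper first derives $\|\mathbf{x}^{t+1} - \overline{\mathbf{x}}^{t+1}\| \le \lambda\|\mathbf{x}^{t} - \overline{\mathbf{x}}^{t}\| + \alpha\gamma\lambda\sqrt{n}$ and then iterates this scalar inequality; your sum $\sum_{m=1}^{t}\lambda^m$ is the correctly indexed version of the paper's geometric sum, which drops a term but reaches the same bound.
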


Lemma \ref{lm:consensus-crude} provides an initial deterministic bound on the consensus gap and is useful since it indicates that, for a sufficiently small step-size, the consensus gap can be controlled by the clipping radius. However, only using Lemma \ref{lm:consensus-crude} would propagate the network effect to the leading term and result in the loss of linear speed-up in the final bound (see Remark \ref{rmk:consensus} in Supplement \ref{sup:proof} for details). To fix this issue, we next provide an additional refined bound on the consensus gap.

\begin{lemma}\label{lm:consensus-bound}
    If Assumptions \ref{asmpt:network}-\ref{asmpt:bdd-het} hold, then for all $t \geq 1$
    \begin{align*}
        &\frac{1}{n}\sum_{i \in [n]}\|\xitp - \oxtp\|^2 \leq \frac{8\alpha^2\lambda^2A^2}{(1-\lambda^2)}\sum_{k = 1}^t\bigg(\frac{1+\lambda^2}{2}\bigg)^{t-k} \\
        &+ \frac{8\alpha^2\lambda^2B^2}{n(1-\lambda^2)}\sum_{k = 1}^t\bigg(\frac{1+\lambda^2}{2}\bigg)^{t-k}\sum_{i \in [n]}\|\nabla f(\xik)\|^2 \\
        &+ \frac{8\alpha^2\lambda^2}{n(1-\lambda^2)}\sum_{k = 1}^t\bigg(\frac{1+\lambda^2}{2}\bigg)^{t-k}\sum_{i \in [n]}\big(\|\tgikb\|^2 + \|\tgiku\|^2\big).
    \end{align*}
\end{lemma}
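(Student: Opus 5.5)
We work with the stacked form of the algorithm. Write $\mathbf{x}^t \in \R^{n\times d}$ for the matrix whose $i$-th row is $\xit$, and $\widetilde{\mathbf{g}}^t$ for the matrix whose $i$-th row is $\tgit$. The update \eqref{eq:gt-model-update} then reads $\mathbf{x}^{t+1} = W(\mathbf{x}^t - \alpha \widetilde{\mathbf{g}}^t)$.

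Since $W$ is doubly stochastic, $JW = J$ and $(I-J)W = (W-J)(I-J)$. Hence the consensus error $\mathbf{e}^{t} \coloneqq (I-J)\mathbf{x}^t$ satisfies
\begin{align*}
\mathbf{e}^{t+1} = (W-J)\big(\mathbf{e}^t - \alpha(I-J)\widetilde{\mathbf{g}}^t\big).
\end{align*}
By Assumption \ref{asmpt:initialization}, $\mathbf{e}^1 = 0$.

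\textbf{Contraction recursion.} Taking Frobenius norms and using $\|W-J\| = \lambda$ and $\|I-J\|\le 1$ gives
\begin{align*}
\|\mathbf{e}^{t+1}\|_F^2 \le \lambda^2\|\mathbf{e}^t - \alpha(I-J)\widetilde{\mathbf{g}}^t\|_F^2.
\end{align*}
Apply Young's inequality $\|a+b\|^2 \le (1+\eta)\|a\|^2 + (1+\eta^{-1})\|b\|^2$ with $\eta$ chosen so that $\lambda^2(1+\eta) = \frac{1+\lambda^2}{2}$, i.e., $\eta = \frac{1-\lambda^2}{2\lambda^2}$. Then $\lambda^2(1+\eta^{-1}) = \lambda^2\frac{1+\lambda^2}{1-\lambda^2} \le \frac{2\lambda^2}{1-\lambda^2}$, and so
\begin{align*}
\|\mathbf{e}^{t+1}\|_F^2 \le \tfrac{1+\lambda^2}{2}\|\mathbf{e}^t\|_F^2 + \tfrac{2\alpha^2\lambda^2}{1-\lambda^2}\|\widetilde{\mathbf{g}}^t\|_F^2 .
\end{align*}
The case $\lambda = 0$ is trivial, since then $\mathbf{e}^{t+1}=0$. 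Unrolling from $\mathbf{e}^1=0$ and dividing by $n$ gives
\begin{align*}
\frac1n\sum_{i\in[n]}\|\xitp-\oxtp\|^2 \le \frac{2\alpha^2\lambda^2}{n(1-\lambda^2)}\sum_{k=1}^t\Big(\tfrac{1+\lambda^2}{2}\Big)^{t-k}\sum_{i\in[n]}\|\tgik\|^2 .
\end{align*}

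\textbf{Bounding the clipped gradients.} Decompose each clipped gradient as
\begin{align*}
\tgik = \nabla f_i(\xik) + \tgikb + \tgiku .
\end{align*}
Then $\|\tgik\|^2 \le 4\|\nabla f_i(\xik)\|^2 + 4\|\tgikb\|^2 + 4\|\tgiku\|^2$; any split with constants at most $4$ suffices, and the three-term Cauchy--Schwarz bound gives the constant $3$. Assumption \ref{asmpt:bdd-het} then gives $\|\nabla f_i(\xik)\|^2 \le A^2 + B^2\|\nabla f(\xik)\|^2$. Summing over $i$ yields $nA^2$ from the first part, which cancels the $1/n$ factor in front of the $A^2$ term, and $B^2\sum_i\|\nabla f(\xik)\|^2$ from the second. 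Collecting constants gives $2\cdot 4 = 8$ in each of the three terms, which is exactly the stated bound.

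\textbf{Main obstacle.} The only delicate step is the Young's inequality split. The weight $\eta$ must be chosen so that the contraction factor is exactly $\frac{1+\lambda^2}{2}$ while the driving term keeps the factor $\lambda^2/(1-\lambda^2)$ rather than $1/(1-\lambda)$. Everything else is routine; in particular, no expectation is needed, since the bound is deterministic and pathwise.
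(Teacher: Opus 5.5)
Your proposal is correct and follows essentially the same route as the paper. Both arguments contract through $\|W-J\|=\lambda$, use Young's inequality with $\theta=\frac{1-\lambda^2}{2\lambda^2}$ to get the factor $\frac{1+\lambda^2}{2}$, unroll from the zero initial consensus gap, and then split $\tgik=\nabla f_i(\xik)+\tgikb+\tgiku$ and apply Assumption \ref{asmpt:bdd-het}. The paper does this split in two stages, first peeling off the true gradients, but it reaches the same constant $8$; your explicit handling of $\lambda=0$, where the Young weight is undefined, is a small point the paper skips.
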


Lemma \ref{lm:consensus-bound} establishes a more fine-grained result on the average consensus gap, capturing the effects of heterogeneity, network connectivity and local clipping. Lemmas \ref{lm:noise-behavior}-\ref{lm:consensus-bound} together are crucial in achieving our main results, stated next. We start with HP convergence. 

\begin{theorem}\label{thm:non-cvx-hp}
    Let Assumptions \ref{asmpt:network}-\ref{asmpt:noise} hold. If for any $T \geq 1$ and $\delta \in (0,1)$, the clipping radius and the step-size are chosen as $\gamma = \max\Big\{2A + 4B\sqrt{L\Delta_1} + \frac{2\lambda L\sqrt{n}}{1-\lambda}, (nT)^{\frac{1}{3p-2}} \Big\}$ and $\alpha = \min\Big\{\frac{1}{2L}, \frac{1-\lambda^2}{8\lambda LB\sqrt{3}}, \frac{1-\lambda^2}{\lambda L\sqrt{6}}, \frac{(1-\lambda^2)^2}{6\lambda^2L}, \frac{(1-\lambda^2)^{2/3}\Delta_1^{1/3}}{2(\lambda LA)^{2/3}T^{1/3}\sqrt[3]{12}}, \frac{C_1}{\gamma}, \frac{C_2}{\gamma}n^{\frac{2p-1}{3p-2}}T^{\frac{1-p}{3p-2}}, \frac{C_3}{\gamma^{2/3}}n^{\frac{p}{3(3p-2)}}T^{\frac{2(1-p)}{3(3p-2)}} \Big\}$, with $C_1 = \min\Big\{1,\frac{3\sqrt{\Delta_1/2L}}{64\log\big(\frac{6T}{\delta}\big)}\Big\}$, $C_2 = \min\Big\{\frac{\Delta_1}{64\sigma^{2p}}, \frac{\sqrt{\Delta_1/2L}}{192\log\big(\frac{6T}{\delta}\big)\sigma^{p/2}} \Big\}$ and $C_3 = \frac{\Delta_1^{1/3}(1-\lambda^2)^{2/3}}{12\sigma^{p/3}\lambda^{2/3}L^{2/3}}$, then with probability at least $1 - \delta$
    \begin{align*}
        \frac{1}{nT}\sum_{t \in [T]}\sum_{i \in [n]}\|\nabla f(\xit)\|^2 = \wbigO\bigg((nT)^{\frac{2(1-p)}{3p-2}}\bigg),
    \end{align*} where $\wbigO(\cdot)$ hides terms of higher-order in $T$.
\end{theorem}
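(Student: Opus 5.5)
We would follow the standard high-probability clipping template of \cite{sadiev2023highprobability,nguyen2023improved}, adapted to the decentralized setting. Network effects enter only through the consensus gap, and they are pushed into higher-order terms using Lemmas \ref{lm:consensus-crude} and \ref{lm:consensus-bound}.

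\textbf{Descent and consensus.} We sum Lemma \ref{lm:descent-inequality} over $t \in [T]$ and telescope with $\Delta_1 \coloneqq f(\overline{x}^1) - \fs$. We then replace $\|\nabla f(\oxt)\|^2$ by the agent-averaged quantity $\frac{1}{n}\sum_i\|\nabla f(\xit)\|^2$, using $\|\nabla f(\xit)\|^2 \le 2\|\nabla f(\oxt)\|^2 + 2L^2\|\xit-\oxt\|^2$; this costs one more consensus-gap term.

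For the consensus gap we insert Lemma \ref{lm:consensus-bound} and sum the geometric weights, $\sum_t\sum_{k\le t}((1+\lambda^2)/2)^{t-k} \le \frac{2}{1-\lambda^2}$ per $k$.
\begin{itemize}
\item The $B^2$ term becomes $\frac{16\alpha^2\lambda^2B^2L^2}{(1-\lambda^2)^2}\cdot\frac{1}{n}\sum_{k,i}\|\nabla f(\xik)\|^2$. The conditions $\alpha \le \frac{1-\lambda^2}{8\lambda LB\sqrt3}$ and $\alpha\le \frac{(1-\lambda^2)^2}{6\lambda^2 L}$ let it be absorbed into the left-hand side.
\item The $A^2$ term contributes $\alpha^3\lambda^2L^2A^2T/(1-\lambda^2)^2$. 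It is controlled by the $T^{-1/3}$ step-size condition, so it is $\bigO(\Delta_1)$.
\item The remaining terms are $\|\tgikb\|^2$ and $\|\tgiku\|^2$, which must be handled as clipping quantities.
\end{itemize}

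\textbf{Clipping quantities.} We first check that the local hypothesis $\|\nabla f_i(\xit)\| \le \gamma/2$ holds. By Assumption \ref{asmpt:bdd-het}, $\|\nabla f_i(\xit)\| \le A + B\|\nabla f(\xit)\|$. Lemma \ref{lm:consensus-crude} with $\alpha \le C_1/\gamma$ gives $\|\xit-\oxt\| \le \lambda\sqrt n/(L(1-\lambda))$-type control. It therefore suffices that $\|\nabla f(\oxt)\| \le 2\sqrt{L\Delta_1}$ on the good event, which is exactly why $\gamma$ contains the term $2A+4B\sqrt{L\Delta_1}+\frac{2\lambda L\sqrt n}{1-\lambda}$.

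We prove this by induction on $t$, on the event $E_t$ that the partial sums up to $t$ satisfy $f(\oxt)-\fs \le 2\Delta_1$; this gives $\|\nabla f(\oxt)\|\le 2\sqrt{L\Delta_1}$. Under this condition the standard clipping facts apply:
\begin{itemize}
\item $\|\tgitb\| \le 2^p\sigma_i^p\gamma^{1-p}$;
\item $\E[\|\tgitu\|^2\mid\Ft] \le 18\gamma^{2-p}\sigma_i^p$;
\item by Lemma \ref{lm:noise-behavior}, $\E[\|\hgtu\|^2\mid\Ft] \le 18\gamma^{2-p}\sigma^p/n$, which is the source of the linear speed-up.
\end{itemize}
The local unbiased terms in the consensus bound carry the factor $\alpha^3\lambda^2L^2\gamma^{2-p}\sigma^p T/(1-\lambda^2)^2$. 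This is exactly what the $C_3\gamma^{-2/3}$ condition on $\alpha$ makes $\bigO(\Delta_1)$, so they are higher order. The terms $\alpha T\sigma^{2p}\gamma^{2-2p}$ and $\alpha^2LT\gamma^{2-p}\sigma^p/n$ are made $\bigO(\Delta_1)$ by $C_2$.

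\textbf{Martingale terms (main obstacle).} These are $-\alpha\sum_t\langle\nabla f(\oxt),\hgtu\rangle$, together with the deviations $\|\hgtu\|^2-\E[\cdot]$ and $\|\tgitu\|^2-\E[\cdot]$. We bound them via Freedman/Bernstein inequalities for bounded martingale differences, with increments bounded by $2\gamma$ and $4\gamma^2$ respectively. To keep the increments bounded, we truncate $\nabla f(\oxt)$ using the indicator of $E_t$. Each inequality holds with probability $1-\delta/(3T)$ or similar, and a union bound over $t$ and the three sequences gives $\log(6T/\delta)$. The $C_1$ and $C_2$ logarithmic conditions ensure each deviation is at most $\Delta_1/4$, which closes the induction $E_{t}\Rightarrow E_{t+1}$.

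The delicate part is that the variance proxy of the inner-product martingale must use $\sigma^p/n$ rather than $\sigma^p$; otherwise the speed-up is lost. The local deviations enter only multiplied by $\alpha^2\lambda^2/(1-\lambda^2)$ and so need only crude control.

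\textbf{Conclusion.} On the resulting event, of probability at least $1-\delta$, we get $\frac{1}{nT}\sum_{t,i}\|\nabla f(\xit)\|^2 \lesssim \Delta_1/(\alpha T)$. With $\gamma=(nT)^{1/(3p-2)}$ the dominant step-size is $\alpha \asymp \gamma^{-1} n^{\frac{2p-1}{3p-2}}T^{\frac{1-p}{3p-2}}$, which gives $\alpha T \asymp n^{\frac{2p-2}{3p-2}}T^{\frac{2p-2}{3p-2}}$. Hence the bound is $(nT)^{\frac{2(1-p)}{3p-2}}$. The remaining step-size constraints produce terms of higher order in $T$.
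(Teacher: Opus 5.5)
Your proposal is correct and follows the paper's proof essentially step for step. The steps are: the deterministic inequality of Lemma~\ref{lm:descent-sharp}, with the $B^2$ term absorbed; an induction over a good event on which $\Delta_t \le 2\Delta_1$, so that by Lemma~\ref{lm:consensus-crude} every local gradient stays below $\gamma/2$; three Freedman applications, each failing with probability $\delta/(3T)$ (the truncated inner product with variance proxy $\propto \sigma^p/n$ from Lemma~\ref{lm:noise-behavior}, plus the two squared-norm deviation sequences); and the final bound $32\Delta_1/(\alpha T)$, with the $C_2$ step-size term dominant. One small repair: bound $\|\nabla f_i(\xit)\| \le L\|\xit - \oxt\| + A + B\|\nabla f(\oxt)\|$ as the paper does (smoothness of $f_i$ first, then Assumption~\ref{asmpt:bdd-het} at $\oxt$), because applying the heterogeneity bound at $\xit$ puts a factor $B$ on the consensus term that the $\frac{2\lambda L\sqrt{n}}{1-\lambda}$ part of $\gamma$ does not cover when $B>1$; also, $\alpha\gamma \le C_1 \le 1$ and Lemma~\ref{lm:consensus-crude} give $\|\xit - \oxt\| \le \frac{\lambda\sqrt{n}}{1-\lambda}$, with no $1/L$.
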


The next result establishes the IE convergence bound.

\begin{theorem}\label{thm:non-cvx-ie}
    Let Assumptions \ref{asmpt:network}-\ref{asmpt:noise} hold. If for any $T \geq 2$ the clipping radius and the step-size are chosen as $\gamma = \max\Big\{2A + 4B\sqrt{L\Delta_1} + \frac{2\lambda L\sqrt{n}}{1-\lambda}, (nT)^{\frac{1}{3p-2}} \Big\}$ and $\alpha = \min\Big\{\frac{1}{2L}, \frac{1-\lambda^2}{8\lambda LB\sqrt{3}}, \frac{1-\lambda^2}{\lambda L\sqrt{6}}, \linebreak \frac{(1-\lambda^2)^2}{6\lambda^2L}, \frac{(1-\lambda^2)^{2/3}\Delta_1^{1/3}}{2(\lambda LA)^{2/3}T^{1/3}\sqrt[3]{12}}, \frac{C_1}{\gamma}, \frac{C_2}{\gamma}n^{\frac{2p-1}{3p-2}}T^{\frac{1-p}{3p-2}}, \frac{\Delta_1^{1/3}(1-\lambda^2)^{2/3}}{12\sigma^{p/3}\gamma^{2/3}\lambda^{2/3}L^{2/3}}n^{\frac{p}{3(3p-2)}}T^{\frac{2(1-p)}{3(3p-2)}}\Big\}$, where $C_1 = \linebreak \min\Big\{1,\frac{3\sqrt{\Delta_1/2L}}{64\log(6n^4T^5)}\Big\}$ and $C_2 = \min\Big\{\frac{\Delta_1}{64\sigma^{2p}},\frac{\sqrt{\Delta_1/2L}}{192\log(6n^4T^5)\sigma^{p/2}} \Big\}$, it then holds that
    \begin{align*}
        \frac{1}{nT}\sum_{t \in [T]}\sum_{i \in [n]}\E\|\nabla f(\xit)\|^2 = \wbigO\bigg((nT)^{\frac{2(1-p)}{3p-2}}\bigg),
    \end{align*} where $\wbigO(\cdot)$ hides terms of higher-order in $T$.
\end{theorem}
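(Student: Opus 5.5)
The plan is to convert the high-probability argument behind Theorem \ref{thm:non-cvx-hp} into an in-expectation one. We run that argument with a failure probability $\delta$ that is polynomially small in $nT$, and we control the bad event by a deterministic worst-case bound. The choice $\log(6n^4T^5)$ in $C_1,C_2$ corresponds to taking $\delta = 1/(n^4T^4)$ in the expression $\log(6T/\delta)$ of Theorem \ref{thm:non-cvx-hp}. With this $\delta$, all step-size and clipping conditions of Theorem \ref{thm:non-cvx-hp} are satisfied. Hence there is an event $E$ with $\mathbb{P}(E)\ge 1-\delta$ on which
\[
\frac{1}{nT}\sum_{t\in[T]}\sum_{i\in[n]}\|\nabla f(\xit)\|^2 = \wbigO\big((nT)^{\frac{2(1-p)}{3p-2}}\big).
\]
The hidden logarithmic factors are now $\log(nT)$, which $\wbigO$ absorbs.

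On the complement $E^c$ we need an almost-sure bound on the same average. Since clipped gradients have norm at most $\gamma$, each averaged step satisfies $\|\oxtp-\oxt\|\le\alpha\gamma$ (double stochasticity preserves the average). Lemma \ref{lm:consensus-crude} gives $\|\xit-\oxt\|\le \alpha\gamma\lambda\sqrt n/(1-\lambda)$, and Assumption \ref{asmpt:initialization} makes the first iterates identical and deterministic. By $L$-smoothness, $\|\nabla f(\xit)\|\le \|\nabla f(x^1)\| + L\|\xit-x^1\|$, so $\|\nabla f(\xit)\|$ is bounded by $\|\nabla f(x^1)\| + L\alpha\gamma\big(T + \lambda\sqrt n/(1-\lambda)\big)$. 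From the constraint $\alpha\le C_1/\gamma\le 1/\gamma$ we get $\alpha\gamma\le 1$. The deterministic bound is therefore $\bigO\big((\|\nabla f(x^1)\| + LT + L\lambda\sqrt n/(1-\lambda))^2\big)$, which is polynomial, namely $\bigO(T^2 n)$ up to problem constants.

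We then split the expectation as
\[
\E[\cdot] \le \E[\cdot\,\mathbb{1}_E] + \delta\cdot \text{(deterministic bound)}.
\]
The first term inherits the high-probability rate, since on $E$ the bound is a deterministic quantity. The second term is $\bigO(n\,T^2/(n^4T^4)) = \bigO(1/(n^3T^2))$. This decays faster than $(nT)^{2(1-p)/(3p-2)}$, because that exponent is at least $-2/3$. It is therefore a higher-order term, consistent with the statement.

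The main obstacle is the first step: making sure the high-probability proof really goes through verbatim with $\delta=1/(n^4T^4)$. This requires checking that every appearance of $\log(6T/\delta)$ is matched by $\log(6n^4T^5)$. It also requires checking that the union bound over the concentration events, which we expect to come from Freedman/Bernstein for the martingale terms $\langle\nabla f(\oxt),\hgtu\rangle$ and $\|\hgtu\|^2-\E[\cdot]$, together with the local noise terms in Lemma \ref{lm:consensus-bound}, still totals at most $\delta$. The threshold $T\ge2$ is used only to keep the logarithms positive and nondegenerate. If the high-probability argument uses a per-iteration induction event, we re-verify that its failure probabilities sum to at most $\delta$. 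Everything else is a straightforward consequence of boundedness of the clipped updates.
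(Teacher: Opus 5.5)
Your proposal is correct and follows essentially the paper's proof: apply Theorem \ref{thm:non-cvx-hp} with $\delta=(nT)^{-4}$, which is exactly where $\log(6n^4T^5)$ comes from, and control the failure event with the almost-sure bound on the iterates that follows from $\|\widetilde g_i^t\|\le\gamma$ and the shared deterministic initialization. The differences are minor: you bound the bad-event contribution directly by $\delta$ times the almost-sure bound, whereas the paper uses Cauchy--Schwarz and pays $\sqrt{\delta}$, which leaves a remainder of the same order $(nT)^{\frac{2(1-p)}{3p-2}}$ as the leading term (yours is genuinely higher order); you also derive the almost-sure bound via $\overline{x}^t$, Lemma \ref{lm:consensus-crude} and $\alpha\gamma\le 1$, rather than via $\|W\|\le 1$ and the $T$-dependent step-size cap.
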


We next discuss several aspects of our results.

\textbf{Convergence Rates}. Theorems \ref{thm:non-cvx-hp} and \ref{thm:non-cvx-ie} respectively establish HP and IE convergence of clipped \dsgd under heavy-tailed noise. In both cases, the leading term with respect to $T$ is of the order $\wbigO\Big((nT)^{\frac{2(1-p)}{3p-2}}\Big)$, which has three important implications. First, the rate in $T$ matches the \emph{optimal rate} achieved in centralized settings \citep{zhang2020adaptive,nguyen2023improved}. Second, \emph{linear speed-up in the number of agents} $n$ is achieved, exhibiting the same exponent with respect to $p$. Third, the leading term \emph{has no network dependence}, with the spectral gap only affecting higher-order terms. For full rates, which contain several higher-order terms, the reader is referred to Supplement \ref{sup:proof}.

\textbf{Clipping Radius}. The clipping radius in our work is of the form $\gamma = \max\big\{C_d, (nT)^{\frac{1}{3p-2}} \big\}$, where $C_d >0$ is a problem-related constant. Compared to centralized works \citep{zhang2020adaptive,nguyen2023improved}, where the clipping radius is also of the form $\gamma = \max\big\{C_c, T^{\frac{1}{3p-2}}\big\}$, we note a few things. First, both clipping radii have the same structure, i.e., a maximum of a problem-related constant and a term growing with $T$. Second, our clipping radius increases with higher heterogeneity (via $A,B$) and worse network connectivity (via the spectral gap), which is expected, as these quantities reflect the difficulty of the decentralized problem. Third, our radius also increases with $n$, which is again expected in the multi-agent setting, and is consistent with \cite{pmlr-v235-gorbunov24a}, who similarly use a clipping radius that increases with $n$. 

\textbf{Step-size.} The step-size is of the form $\alpha = \min\big\{S_d, \frac{C_1}{\gamma},\frac{C_2}{\gamma}n^{\frac{2p-1}{3p-2}}T^{\frac{1-p}{3p-2}},\frac{C_3}{\gamma^{2/3}}n^{\frac{p}{3(3p-2)}}T^{\frac{2(1-p)}{3(3p-2)}} \big\}$, where $S_d > 0$ is a problem-related constant, corresponding to standard conditions on the step-size for vanilla \dsgd, e.g., \citep{armacki2026dsgd}, while the remaining terms stem from the use of clipping. The first two, scaling as $\propto \gamma^{-1}$, are consistent with the step-size choice in \cite{zhang2020adaptive,nguyen2023improved} (up to $n$ factors), while the third term, scaling as $\propto \gamma^{-2/3}$, is novel and specific to our decentralized settings, used to optimize the network dependence and transient time.

\textbf{Transient Time.} Using our rates, one can derive the \emph{transient time} for clipped \dsgd, i.e., the time needed to achieve a global, network-independent $\bigO\Big((nT)^{\frac{2(1-p)}{3p-2}}\Big)$ rate. Focusing on the effects of the number of agents and the spectral gap, our transient time is of the order $\wbigO\Big(\max\Big\{n^{\frac{2p-1}{p-1}},\frac{n^{\frac{3p-4}{2}}}{(1-\lambda)^{3p-2}}, \frac{n^{\frac{8(p-1)}{2+p}}}{(1-\lambda)^{\frac{4(3p-2)}{2+p}}}, \frac{n^{\frac{5p-4}{p}}}{(1-\lambda)^{\frac{2(3p-2)}{p}}}, \frac{n^{3(p-1)}}{(1-\lambda)^{3p-2}}, \frac{n^{\frac{2(p-1)}{p}}}{(1-\lambda)^{\frac{2(3p-2)}{p}}}, \frac{n^{\frac{7p-6}{2p}}}{(1-\lambda)^{\frac{3p-2}{p}}} \Big\}\Big)$. For $p = 2$ (i.e., bounded variance), it evaluates to $\bigO\Big(\frac{n^3}{(1-\lambda)^4}\Big)$, matching the transient time of vanilla \dsgd under bounded variance/sub-Gaussian noise, e.g., \citep{pmlr-v119-koloskova20a,unified-refined,armacki2026dsgd}. For further discussion, see Supplement \ref{sup:tran-time}. 

\begin{figure*}[t]
\centering
\captionsetup{font=footnotesize}
\begin{tabular}{cc}
\includegraphics[width=0.4\linewidth]{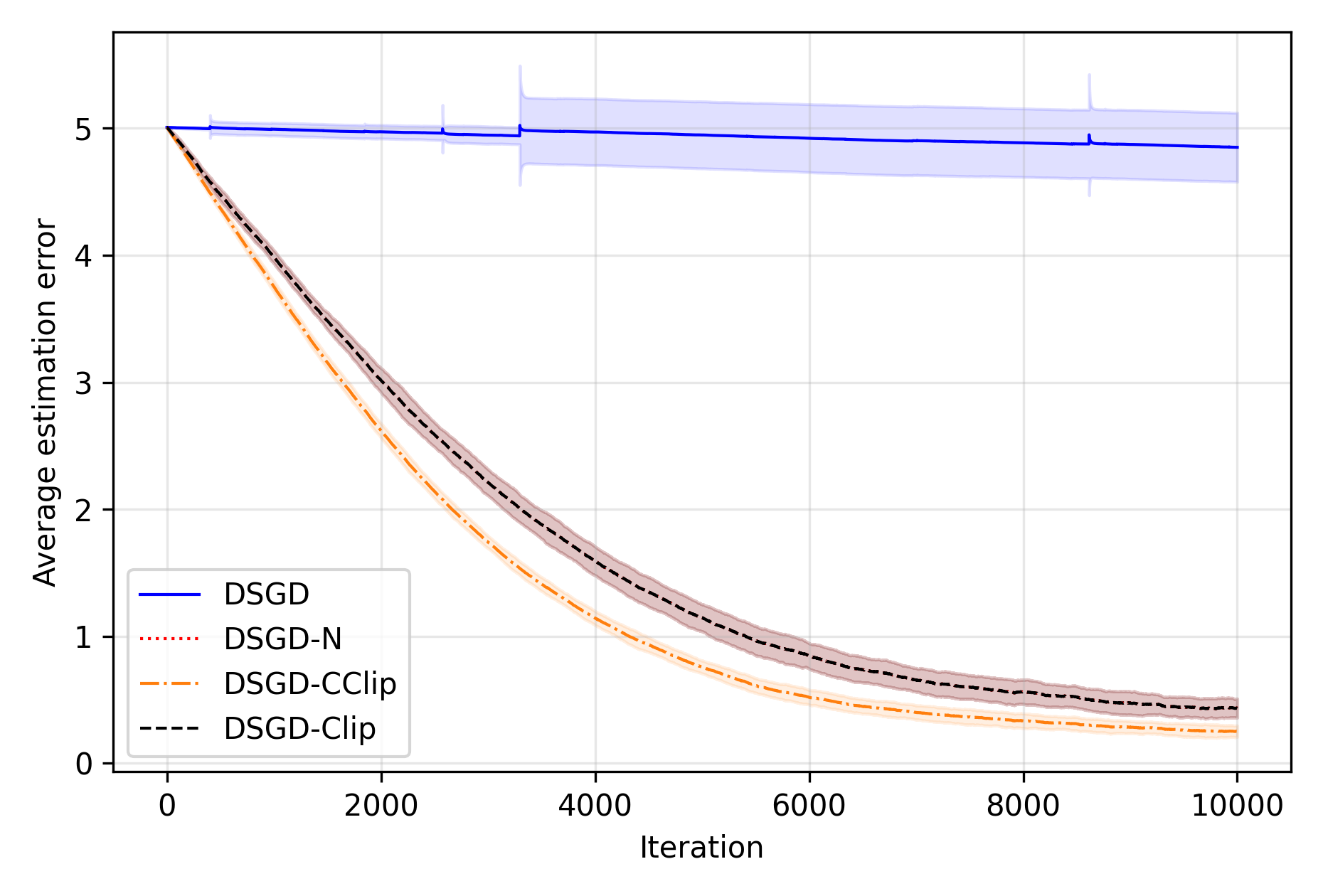}
&
\includegraphics[width=0.4\linewidth]{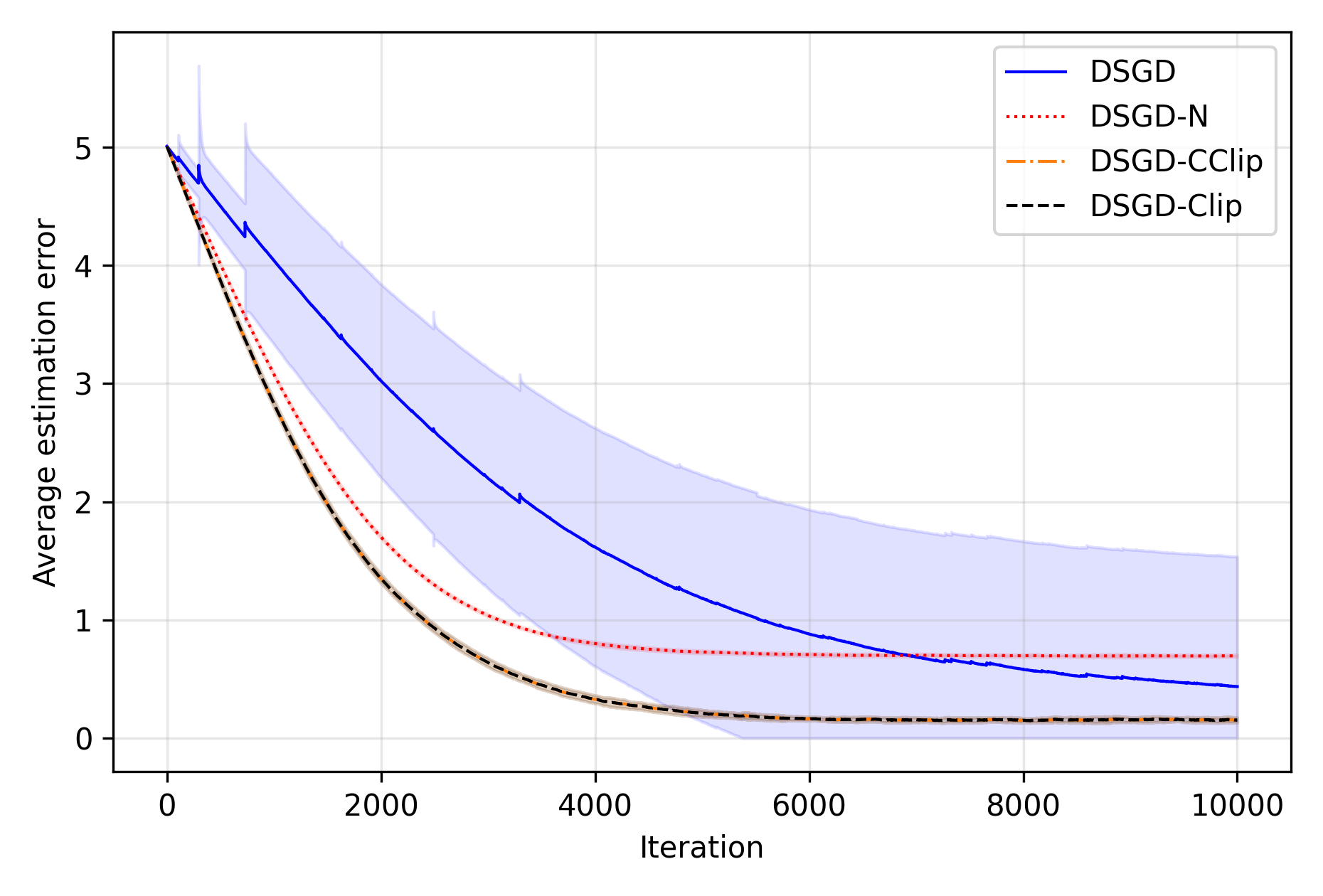}
\end{tabular}
\caption{
Convergence of decentralized methods under heavy-tailed gradient noise. Left: Student's
$t$ noise; right: L\'evy $\alpha$-stable noise. Solid curves show the mean estimation error, while shaded regions indicate one standard deviation.}
\label{fig:conv}
\end{figure*}

\textbf{Comparison with Existing Works.} Compared to \cite{clipped-dsgd1}, who study a variant of clipped \dsgd, we relax the bounded gradients condition and achieve improved oracle complexity. Compared to \cite{li2025porter}, who study clipped GT with compression, EF and mini-batches under bounded variance noise, we consider a much simpler algorithm under more general noise and heterogeneity conditions, and achieve improved oracle complexity. Compared to \cite{yu2026decentralized}, who study normalized GT with local momentum, we use the baseline clipped \dsgd, achieving better oracle complexity. Finally, compared to \cite{luoluo2026near}, who study normalized GT with large mini-batches and multiple communication rounds per iteration, we achieve the same optimal oracle complexity while using a significantly simpler algorithm, with only one stochastic gradient evaluation and one communication round per iteration. We note that the GT mechanism allows \cite{yu2026decentralized,luoluo2026near} to remove the heterogeneity bound, which is not the case in \citep{li2025porter} despite the use of GT, or in our work and \citep{clipped-dsgd1}, where \dsgd necessitates a heterogeneity bound to ensure convergence. Compared to all of the said works, which provide \emph{either HP or IE} results, we show \emph{both HP and IE} results.

\textbf{Insights.} As discussed in the introduction and shown in Figure \ref{fig:normalization_clipping_example}, normalized \dsgd can fail to converge even with full gradients on a simple quadratic cost. On the other hand, our work demonstrates that clipped \dsgd achieves optimal convergence rates with linear speed-up under heavy-tailed noise, highlighting a clear gap between these two nonlinearities in decentralized settings. Intuitively, the gap stems from the fact that, while normalization always modifies the gradient direction, clipping keeps it unchanged when the gradient norm is below a threshold, maintaining full magnitude information. As such, normalization always biases the network-averaged gradient, while clipping can preserve the true network-averaged gradient, allowing clipped \dsgd to achieve order-optimal complexity with linear speed-up, without needing mechanisms like GT, accelerated gossip and local momentum/mini-batches.

\section{Numerical Results}\label{sec:num}

\begin{figure*}[t]
\centering
\captionsetup{font=footnotesize}
\begin{tabular}{cc}
\includegraphics[width=0.4\linewidth]{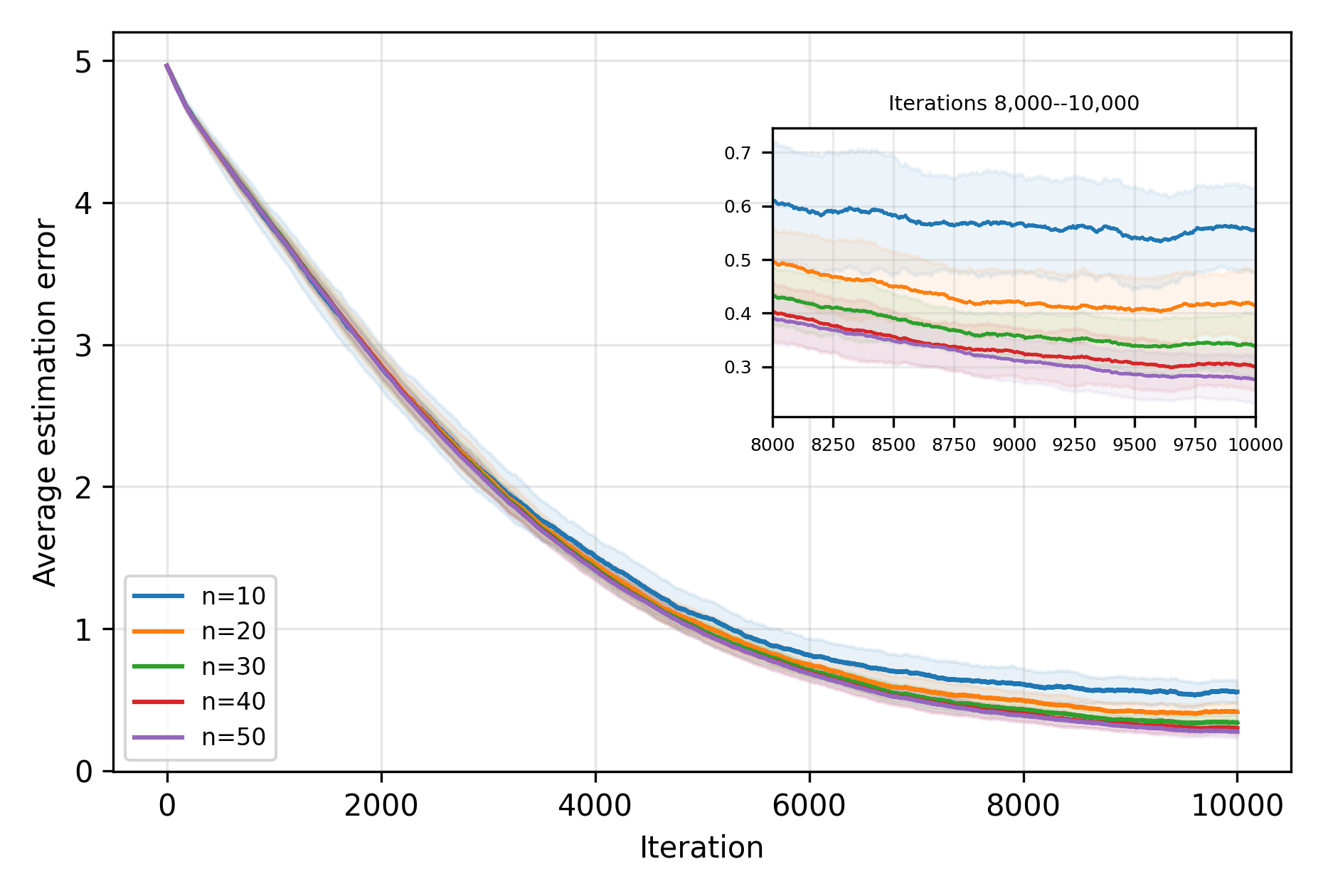}
&
\includegraphics[width=0.4\linewidth]{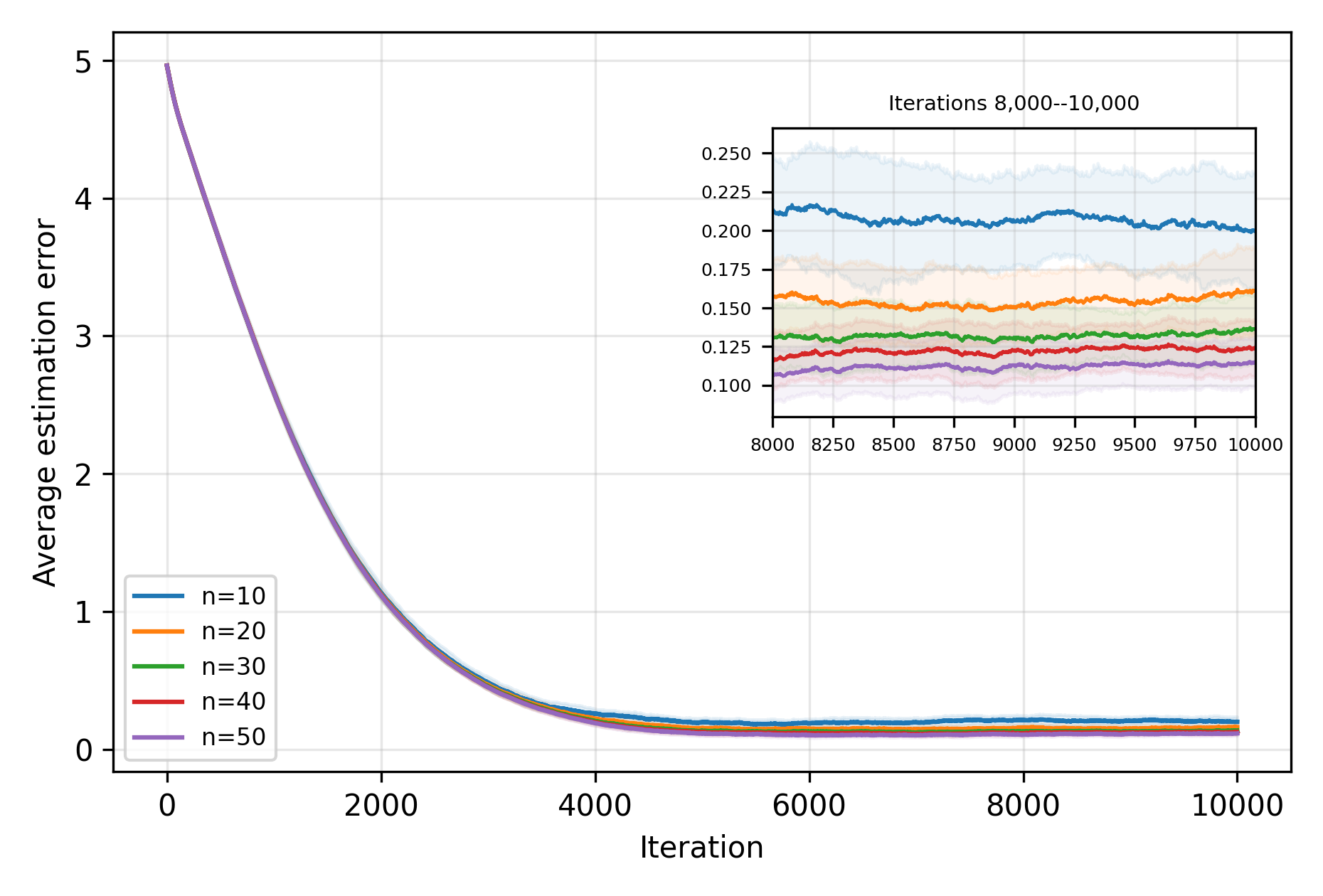}
\end{tabular}
\caption{Convergence of \dsgdclip under heavy-tailed noise for networks of size $n \in \{10,20,30,40,50\}$. Left: Student's $t$ noise; right: L\'evy $\alpha$-stable noise. We can see that \dsgdclip accelerates with network size, achieving linear speed-up.}
\label{fig:lin_speedup}
\end{figure*}

In this section we present numerical experiments. We refer the reader to Supplement \ref{sup:num} for full details on experimental setup and additional results.   

\textbf{Linear Regression on Synthetic Data.} We consider the problem of robust linear regression to evaluate two main aspects of our results: convergence and linear speed-up under heavy-tailed noise. We compare the performance of vanilla \dsgd to nonlinear variants using: clipping (\dsgdclip), component-wise clipping (\dsgdcclip) and normalization (\dsgdn), with hyperparameters selected through extensive grid search. To evaluate convergence under heavy-tailed noise, we consider a network of $n = 20$ agents communicating over a ring graph. Each agent is assigned a dataset of $50$ feature-response pairs $(A,b) \in \R^{20} \times \R$, where each feature component of $A$ is sampled independently from a $\bern(q)$ distribution, where $q = 0.9$ for the first two, $q=0.5$ for the next two, and $q = 0.1$ for the remaining components, the ground truth model $x^\star$ is drawn from $\calN(0,I_{20})$, while the responses satisfy $b=\langle A,x^\star\rangle$. Agents' local costs correspond to Tukey's biweight loss and local gradients are perturbed by Student's $t$ (degrees of freedom $1.5$, scale $1.0$) and
L\'evy $\alpha$-stable noise (stability $1.5$, skewness $0.5$, scale $1.0$), with local models initialized at the zero vector. We measure performance via the average estimation error $\mathcal{E}_t=\frac{1}{n}\sum_{i=1}^{n}\left\|w_i^t-w^\star\right\|_2$, evaluated over $30$ independent
runs. The results are presented in Figure \ref{fig:conv}. We can observe that nonlinear methods stabilize and accelerate the convergence of \dsgd, highlighting the importance of nonlinear methods under heavy-tailed noise, and that both \dsgdclip and \dsgdcclip perform on par with or better than \dsgdn.  

To evaluate the linear speed-up of \dsgdclip under heavy tailed noise, we consider five networks of $n \in \{10,20,30,40,50\}$ agents communicating over an undirected Erd\H{o}s--R\'enyi graph. To better isolate the linear speed-up effect, we set the edge probability of the graph at $0.6$ while ensuring that $\lambda = 0.8$ across every network size, thus fixing the network spectral gap. We generate a total of $1200$ feature-response pairs using the same rules as in the previous experiments and spread them evenly among agents. We again inject Student's $t$ and L\'{e}vy $\alpha$-stable noise to agents' local gradients, using the same Tukey loss, model initialization and performance measure. The results are presented in Figure \ref{fig:lin_speedup}. We can see that \dsgdclip consistently converges faster as the network size increases, confirming that linear speed-up is achieved.

\textbf{Transformer Training on Real Data.} Finally, we test the performance of nonlinear decentralized methods on the problem of decentralized training of a decoder-only transformer with real data. The simulations are conducted using antup and additional some additional results A100 NVIDIA GPU. We consider an undirected ring network of $n = 8$ agents, trained on the ``Multi30k'' and ``Tiny Shakespeare'' datasets. ``Multi30k'' contains 29,000 training sentences, yielding approximately 378,000 word tokens after tokenization, while ``Tiny Shakespeare'' contains about 1 million training characters. We use two-layer decoder-only transformers with hidden dimension 128, four attention heads, feedforward dimension 512, and dropout 0.1. The models are trained using a batch size of 64 sequences per agent, with context length 64 and stride 32. The training data are partitioned into equally sized contiguous shards, and all agents are initialized with the same random model. We again consider \dsgd, \dsgdn, \dsgdclip, and \dsgdcclip methods and measure the performance via the log-perplexity achieved on a validation set. Table \ref{tab:transformers} reports the final validation loss averaged over five independent runs. We can see that \dsgdcclip achieves the lowest validation loss on both datasets, while \dsgdclip and \dsgdn perform on par with one another. Importantly, all three methods improve over vanilla \dsgd.

\begin{table}[htbp]
\centering
\captionsetup{font=normalsize}
\caption{Validation loss after 20 epochs on ``Multi30k'' and ``Tiny Shakespeare'' datasets. We report the mean and standard deviation over five independent runs.}
\label{tab:transformers}
\begin{tabular}{|l|c|c|}
\hline
Algorithms & ``Multi30k'' & ``Tiny Shakespeare'' \\
\hline
\dsgd & $4.634{\pm0.026}$ & $2.357{\pm0.006}$ \\
\hline
\dsgdn & $4.386{\pm0.016}$ & $1.966{\pm0.035}$ \\
\hline
\dsgdclip & $4.375{\pm0.021}$ & $1.980{\pm0.050}$ \\
\hline
\dsgdcclip & \textbf{3.962$\pm$0.008} & \textbf{1.725$\pm$0.010} \\
\hline
\end{tabular}
\end{table}

\section{Conclusion}\label{sec:conc}

In this work, we studied whether a baseline decentralized algorithm with a local nonlinear gradient transformation can attain order-optimal rates under heavy-tailed noise, without further mechanisms like mini-batches or accelerated gossip. We answered affirmatively with clipped \dsgd, establishing order-optimal convergence rates and linear speed-up for smooth non-convex costs. Our results also highlighted an important distinction between clipping and normalization in decentralized settings, raising broader questions about the effects of nonlinearities in decentralized optimization. Are some nonlinearities better suited to decentralized settings, and if so, can we establish a hierarchy across different regimes? Another important direction is whether baseline GT with clipping can simultaneously eliminate the bounded heterogeneity condition and retain order-optimal convergence rates. This, in turn, raises a further design question: should clipping be applied to local gradients or to the tracker variable, and what are the consequences of each choice? 

\subsubsection*{Acknowledgements}
The authors would like to thank Shuhua Yu (Meta FAIR) for helpful discussions.

\bibliographystyle{plainnat}
\bibliography{bibliography}

\appendix

\section*{Supplement}

The supplement contains results omitted from the main body. Section \ref{sup:lit} reviews some further literature, Section \ref{sup:proof} provides technical results and proofs, Section \ref{sup:num} details the experimental setup and contains additional results, Section \ref{sup:het} discusses the heterogeneity condition, while Section \ref{sup:tran-time} discusses the transient time.

\section{Additional Literature}\label{sup:lit}

In this section we provide some additional related literature. 

\textbf{Centralized Results.} \cite{nemirovski2009robust} establish optimal HP convergence rates of SGD for convex costs under light-tailed stochastic gradients. \cite{ghadimi2013stochastic} establish, among other results, the optimal IE and HP convergence rate of SGD for non-convex costs under light-tailed noise, while \cite{li2020high} show the same for momentum SGD. \cite{harvey2019tight} and \cite{pmlr-v206-bajovic23a} respectively provide optimal HP convergence rates for the last iterate of SGD for non-smooth and smooth strongly convex costs, while \cite{liu2024revisiting} establish unified IE and HP convergence guarantees for smooth and non-smooth convex and strongly convex costs. \cite{liu2023high} generalize the previous works on non-convex and convex costs, providing unified HP guarantees for several algorithms, including SGD and AdaGrad for smooth and non-smooth costs. \cite{madden2020high} establish HP convergence of SGD under sub-Weibull noise. \cite{gorbunov2020stochastic} propose an accelerated method with gradient clipping for smooth convex costs, while \cite{parletta2022high} study HP convergence of sub-gradient methods for non-smooth convex costs. \cite{koloskova2023grad_clip} show that clipped SGD fails to converge for any fixed constant value of the clipping threshold $\gamma$. \cite{jakovetic2023nonlinear} study convergence of a unified nonlinear SGD framework under symmetric heavy-tailed noise and strongly convex costs. Finally, \cite{gorbunov2023breaking} show that it is possible to achieve the optimal HP convergence rate for convex costs using the median-of-means gradient estimator.

\textbf{Decentralized Results.} \cite{conv-rates-dsgd-jakovetic} show that \dsgd converges at an optimal IE rate for strongly convex costs, while \cite{Pu2021} show that \dsgd with a fixed step-size and the gradient tracking (GT) mechanism, e.g., \cite{scutari-next,nedic-tracking,qu-harnessing}, converges IE to a neighborhood of the optimal solution. \cite{vlaski-dsgd-nonconv-1,vlaski-dsgd-nonconv-2} study IE guarantees of \dsgd for non-convex costs and show that it escapes saddle points with high probability. \cite{pmlr-v119-koloskova20a} provide unified IE guarantees for \dsgd with local updates and changing network topology, with optimal rates and linear speed-up in the number of users for non-convex and (strongly) convex costs. \cite{ran-improved} establish optimal IE rates of \dsgd with GT and linear speed-up in the number of users for non-convex costs and costs satisfying the Polyak-\L{}ojasiewicz (PL) condition. \cite{clipped-dual-avg,clipped-dsgd2} study HP convergence of decentralized algorithms with clipping, for convex costs under heavy-tailed noise. \cite{yu2026smoothed} study convergence of \dsgd using smooth clipping under symmetric heavy-tailed noise and strongly convex costs. Finally, a recent line of works show HP convergence with linear speed-up of vanilla \dsgd and GT \citep{armacki2026dsgd,armacki2026high}.

\section{Missing Proofs}\label{sup:proof}

In this section we provide several technical results and proofs omitted from the main body. Subsection \ref{subsup:gen} provides general results, Subsection \ref{subsup:hp} provides proofs for HP results, while Subsection \ref{subsup:ie} provides proofs for IE results. 

\subsection{General Results}\label{subsup:gen}

In this subsection we prove Lemmas \ref{lm:noise-behavior}-\ref{lm:consensus-bound}. We start by stating a useful lemma from \cite{sadiev2023highprobability}.

\begin{lemma}[Lemma 5.1 in \cite{sadiev2023highprobability}]\label{lm:sadiev}
    Let $X \in \R^d$ be a random vector and let $x = \E[X]$ and $\widetilde{X} = \clip(X)$ be its expectation and clipped version. If for some $\sigma > 0$ and $p \in (1,2]$ we have $\E\|X - x\|^p \leq \sigma^p$ and $\|x\| \leq \nicefrac{\gamma}{2}$, then
    \begin{align}
        &\|\E[\widetilde{X}] - x\| \leq \frac{4\sigma^p}{\gamma^{p-1}}, \label{eq:bias-bdd} \\
        &\E\|\widetilde{X} - \E[\widetilde{X}]\|^2 \leq 18\gamma^{2-p}\sigma^p. \label{eq:variance-bdd}
    \end{align}
\end{lemma}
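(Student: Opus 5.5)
The statement above is Lemma 5.1 of \cite{sadiev2023highprobability}, so it may simply be quoted. If a self-contained argument is wanted, the plan is a direct computation based on two events. Let $\chi \coloneqq \mathbb{1}\{\|X\| > \gamma\}$ indicate that clipping is active, and let $\eta \coloneqq \mathbb{1}\{\|X - x\| > \nicefrac{\gamma}{2}\}$ indicate that the noise is large.

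\textbf{Comparison of the two events.} Since $\|x\| \leq \nicefrac{\gamma}{2}$, the triangle inequality gives $\|X\| \leq \|X - x\| + \nicefrac{\gamma}{2}$. Hence $\chi \leq \eta$.

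\textbf{Markov-type estimate.} Markov's inequality applied to $\|X-x\|^p$ gives
\begin{align*}
\E[\eta] \leq \frac{2^p\sigma^p}{\gamma^p}.
\end{align*}
For any $q \in [0,p]$, bounding $\|X-x\|^{q} \leq \|X-x\|^p (\nicefrac{\gamma}{2})^{q-p}$ on the event $\{\eta = 1\}$ gives
\begin{align*}
\E\big[\|X-x\|^q\eta\big] \leq \sigma^p(\nicefrac{\gamma}{2})^{q-p}.
\end{align*}

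\textbf{Bias bound \eqref{eq:bias-bdd}.} Write $\E[\widetilde{X}] - x = \E[(\widetilde{X} - X)\chi]$, which holds because $\E[X] = x$ and $\widetilde{X} = X$ when $\chi = 0$. On $\{\chi = 1\}$ we have $\|\widetilde{X} - X\| = \|X\| - \gamma$. This is at most $\|X - x\| - \nicefrac{\gamma}{2} \leq \|X - x\|$. Combining with $\chi \leq \eta$ and the Markov-type estimate with $q = 1$ yields
\begin{align*}
\|\E[\widetilde{X}] - x\| \leq \E\big[\|X - x\|\eta\big] \leq \frac{2^{p-1}\sigma^p}{\gamma^{p-1}} \leq \frac{4\sigma^p}{\gamma^{p-1}}.
\end{align*}

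\textbf{Variance bound \eqref{eq:variance-bdd}.} The mean minimizes the mean-squared deviation, so $\E\|\widetilde{X} - \E[\widetilde{X}]\|^2 \leq \E\|\widetilde{X} - x\|^2$. Split this expectation on $\eta$.
\begin{itemize}
\item On $\{\eta = 0\}$ we have $\|X\| \leq \gamma$, so $\widetilde{X} = X$. Then
\begin{align*}
\|\widetilde{X} - x\|^2 = \|X-x\|^{2-p}\|X-x\|^p \leq (\nicefrac{\gamma}{2})^{2-p}\|X-x\|^p,
\end{align*}
and taking expectations contributes at most $\gamma^{2-p}\sigma^p$.
\item On $\{\eta = 1\}$ we use only the crude bound $\|\widetilde{X} - x\| \leq \gamma + \nicefrac{\gamma}{2} = \nicefrac{3\gamma}{2}$. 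Together with the Markov bound this contributes at most
\begin{align*}
\frac{9\gamma^2}{4}\cdot\frac{2^p\sigma^p}{\gamma^p} \leq 9\gamma^{2-p}\sigma^p.
\end{align*}
\end{itemize}
Summing the two contributions gives at most $10\gamma^{2-p}\sigma^p \leq 18\gamma^{2-p}\sigma^p$.

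\textbf{Main obstacle.} The only delicate point is the variance bound. Bounding $\|\widetilde{X} - x\|^2$ by $\|X - x\|^2$ would require a second moment, which is not available for $p < 2$. The fix is to use the boundedness of $\widetilde{X}$ on the large-noise event, and the truncation $\|X-x\|^{2-p} \leq (\nicefrac{\gamma}{2})^{2-p}$ on the small-noise event. This trade-off is exactly what produces the $\gamma^{2-p}$ factor.
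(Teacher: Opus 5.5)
Your proposal is correct and agrees with the paper, which gives no proof of this lemma and simply imports it from \cite{sadiev2023highprobability}, as your first sentence suggests. The self-contained argument you add is also sound: it is the standard truncation proof built on the indicators $\chi \leq \eta$ and Markov's inequality, and your split of $\E\|\widetilde{X} - x\|^2$ on the large-noise event gives sharper constants, namely $2^{p-1}$ instead of $4$ in the bias bound and $10$ instead of $18$ in the variance bound.
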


We next state and prove an extended version of Lemma \ref{lm:noise-behavior}.
{
\renewcommand{\thelemma}{1}
\begin{lemma}
    Let $X_1,\ldots,X_n \in \R^d$ be independent random vectors and let $x_i = \E[X_i]$ and $\widetilde{X}_i = \clip(X_i)$, $i \in [n]$ be the expectation and clipped version, with $\overline{x} = \frac{1}{n}\sum_{i \in [n]}x_i$ and $\widetilde{X} = \frac{1}{n}\sum_{i \in [n]}\widetilde{X}_i$ being the averaged values. If for some $\sigma_i > 0$ and $p \in (1,2]$ we have $\E\|X_i - x_i\|^p \leq \sigma_i^p$ and $\|x_i\| \leq \nicefrac{\gamma}{2}$, then
    \begin{align*}
        &\|\E[\widetilde{X}] - \overline{x}\| \leq \frac{4\sigma^p}{\gamma^{p-1}}, \\
        &\E\|\widetilde{X} - \E[\widetilde{X}]\|^2 \leq \frac{18\gamma^{2-p}\sigma^p}{n}, 
    \end{align*} where $\sigma^p = \frac{1}{n}\sum_{i \in [n]}\sigma_i^p$.
\end{lemma}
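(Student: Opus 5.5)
The proof reduces to Lemma \ref{lm:sadiev} applied agent-by-agent, with two aggregation steps: linearity of expectation for the bias, and independence for the variance.

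\textbf{Bias.} By linearity, $\E[\widetilde{X}] - \overline{x} = \frac{1}{n}\sum_{i \in [n]}\big(\E[\widetilde{X}_i] - x_i\big)$. Each $X_i$ satisfies the hypotheses of Lemma \ref{lm:sadiev} with $\sigma_i$ and $\|x_i\| \leq \nicefrac{\gamma}{2}$, so \eqref{eq:bias-bdd} gives $\|\E[\widetilde{X}_i] - x_i\| \leq 4\sigma_i^p/\gamma^{p-1}$. The triangle inequality then yields
\begin{align*}
    \|\E[\widetilde{X}] - \overline{x}\| \leq \frac{1}{n}\sum_{i \in [n]}\frac{4\sigma_i^p}{\gamma^{p-1}} = \frac{4\sigma^p}{\gamma^{p-1}}.
\end{align*}
This already has the averaged form in the statement, so nothing more is needed here.

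\textbf{Variance.} Set $Y_i \coloneqq \widetilde{X}_i - \E[\widetilde{X}_i]$, so that $\widetilde{X} - \E[\widetilde{X}] = \frac{1}{n}\sum_{i \in [n]} Y_i$. Each $Y_i$ is a measurable function of $X_i$ alone, so the $Y_i$ are independent. They are also zero-mean and square integrable, since $\|\widetilde{X}_i\| \leq \gamma$. Expanding the square,
\begin{align*}
    \E\Big\|\frac{1}{n}\sum_{i \in [n]} Y_i\Big\|^2 = \frac{1}{n^2}\sum_{i \in [n]}\E\|Y_i\|^2 + \frac{1}{n^2}\sum_{i \neq j}\E\langle Y_i, Y_j\rangle.
\end{align*}
Independence and zero mean give $\E\langle Y_i, Y_j\rangle = \langle \E Y_i, \E Y_j\rangle = 0$ for $i \neq j$, so the cross terms vanish. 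Applying \eqref{eq:variance-bdd} to each remaining diagonal term gives $\E\|Y_i\|^2 \leq 18\gamma^{2-p}\sigma_i^p$. Therefore
\begin{align*}
    \E\|\widetilde{X} - \E[\widetilde{X}]\|^2 \leq \frac{18\gamma^{2-p}}{n^2}\sum_{i \in [n]}\sigma_i^p = \frac{18\gamma^{2-p}\sigma^p}{n}.
\end{align*}
This is both the claim and the bound in Lemma \ref{lm:noise-behavior}.

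\textbf{Main point to check.} The only step needing care is the vanishing of the cross terms. This is exactly where the $1/n$ variance reduction comes from, and it depends on two facts. First, clipping is applied coordinate-free to each $X_i$ separately, so independence is preserved. Second, the centering is by the mean of the \emph{clipped} variable, $\E[\widetilde{X}_i]$, and not by $x_i$. Centering at $x_i$ would leave the bias terms in the cross products and spoil the $1/n$ factor; the bias is instead handled separately in the first part. Integrability is immediate from boundedness of the clipped vectors, so no extra moment conditions are needed.
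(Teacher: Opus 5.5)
Your proposal is correct and matches the paper's proof: you apply Lemma~\ref{lm:sadiev} to each agent, use the triangle (Jensen) inequality for the averaged bias, and use independence to make the cross terms vanish in the variance expansion. Your intermediate term $\|\E[\widetilde{X}_i] - x_i\|$ is the right one; the paper's displayed chain writes $\overline{x}$ in its place, which is a typo.
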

}
\begin{proof}
    Using Jensen's inequality and \eqref{eq:bias-bdd}, we then get
    \begin{equation*}
        \|\E[\widetilde{X}] - \overline{x}\| \leq \frac{1}{n}\sum_{i \in [n]}\|\E[\widetilde{X}_i] - \overline{x}\| \leq \frac{1}{n}\sum_{i \in [n]}\frac{4\sigma_i^p}{\gamma^{p-1}} = \frac{4\sigma^p}{\gamma^{p-1}},
    \end{equation*} proving the first claim. Next, using the fact that $X_i$'s are independent, we have
    \begin{align*}
        \E\|\widetilde{X} - \E[\widetilde{X}]\|^2 &= \frac{1}{n^2}\sum_{i \in [n]}\E\|\widetilde{X}_i - \E[\widetilde{X}_i]\|^2 + \frac{1}{n^2}\sum_{j \neq i}\E\big\langle \widetilde{X}_i - \E[\widetilde{X}_i], \widetilde{X}_j - \E[\widetilde{X}_j]\big\rangle \\ 
        &= \frac{1}{n^2}\sum_{i \in [n]}\E\|\widetilde{X}_i - \E[\widetilde{X}_i]\|^2 \leq \frac{1}{n^2}\sum_{i \in [n]}18\gamma^{2-p}\sigma_i^p = \frac{18\gamma^{2-p}\sigma^p}{n},
    \end{align*} where the inequality follows from \eqref{eq:variance-bdd}, completing the proof.
\end{proof}

We continue by proving Lemma \ref{lm:descent-inequality}.

\begin{proof}[Proof of Lemma \ref{lm:descent-inequality}]
    Starting from the standard descent lemma for smooth costs, we get
    \begin{align*}
        f(\oxtp) &\leq f(\oxt) - \alpha \langle \nabla f(\oxt), \hgt \rangle + \frac{\alpha^2L}{2}\|\hgt\|^2 \\
        &= f(\oxt) - \alpha \langle \nabla f(\oxt), \onft \rangle - \alpha \langle \nabla f(\oxt), \hgtu + \hgtb \rangle + \frac{\alpha^2L}{2}\|\hgt\|^2 \\
        &\stackrel{(i)}{\leq} f(\oxt) - \alpha \langle \nabla f(\oxt), \onft \rangle - \alpha \langle \nabla f(\oxt), \hgtu + \hgtb \rangle + \alpha^2L\big(\|\hgtu + \hgtb\|^2 + \|\onft\|^2), \\
        &\stackrel{(ii)}{\leq} f(\oxt) - \frac{\alpha}{4}\|\nabla f(\oxt)\|^2 - \frac{\alpha}{2}\big(1-2\alpha L\big)\|\onft\|^2 +\frac{\alpha}{2}\|\nabla f(\oxt) - \onft\|^2 \\
        &- \alpha\langle \nabla f(\oxt),\hgtu\rangle + \alpha(1 + 2\alpha L)\|\hgtb \|^2 + 2\alpha^2L\|\hgtu\|^2 
    \end{align*} where in $(i)$ we used $\|\hgt\|^2 \leq 2\|\hgtu + \hgtb\|^2 + 2\|\onft\|^2$, while in $(ii)$ we applied the identities $\langle a,b\rangle = \frac{1}{2}\big(\|a\|^2 + \|b\|^2 - \|a-b\|^2)$ and $\langle a,b\rangle \leq \frac{\|a\|^2}{4} + \|b\|^2$, respectively to $\langle \nabla f(\oxt),\onft\rangle$ and $\langle \nabla f(\oxt), \hgtu + \hgtb\rangle$. Choosing $\alpha \leq \frac{1}{2L}$ implies $1 - 2\alpha L \geq 0$, which, combined with smoothness of $f$, gives
    \begin{align*}
        f(\oxtp) \leq f(\oxt) - \frac{\alpha}{4}\|\nabla f(\oxt)\|^2 - \alpha\langle \nabla f(\oxt), \hgtu\rangle + 2\alpha^2L\|\hgtu\|^2 + 2\alpha\|\hgtb\|^2 + \frac{\alpha L^2}{2n}\sum_{i \in [n]}\|\xit - \oxt\|^2.
    \end{align*} 
\end{proof}

For ease of notation, define the vectorized, stacked notation $\bxt \coloneqq \text{col}(x_1^t,\ldots,x_n^t) \in \R^{nd}$, $\wbgt \coloneqq \text{col}(\widetilde{g}_1^t,\ldots,\widetilde{g}_n^t) \in \R^{nd}$ and $\obxt \coloneqq \mathbf{1}_n \otimes \oxt \in \R^{nd}$ and let $\bW \coloneqq W \otimes I_d \in \R^{nd \times nd}$, $\bJ \coloneqq J \otimes I_d \in \R^{nd \times nd}$, where $\otimes$ denotes the Kronecker product. We next prove Lemma \ref{lm:consensus-crude}.

\begin{proof}[Proof of Lemma \ref{lm:consensus-crude}]
    Starting from the update rule \eqref{eq:gt-model-update}, it then follows that
    \begin{align*}
        \|\bxtp - \obxtp\| &= \|(\bW - \bJ)(\bxt - \alpha\wbgt)\| = \|(\bW - \bJ)(\bxt - \obxt - \alpha\wbgt)\| \\
        &\leq \lambda\|\bxt - \obxt - \alpha \wbgt\| \leq \lambda\|\bxt - \obxt\| + \alpha\lambda\|\wbgt\| \leq \lambda\|\bxt - \obxt\| + \alpha\gamma\lambda\sqrt{n}, 
    \end{align*} where the last inequality follows from the fact that $\|\wbgt\| \leq \gamma\sqrt{n}$. Iterating the inequality recursively, we get
    \begin{align*}
        \|\bxtp - \obxtp\| \leq \lambda^t\|\bx^1 - \obx^1\| + \alpha\gamma\sqrt{n}\sum_{k = 1}^{t-1}\lambda^{t-k} \stackrel{(i)}{=} \alpha\gamma\lambda\sqrt{n}\sum_{k = 1}^{t-1}\lambda^{t-1-k} \leq \frac{\alpha\gamma\lambda\sqrt{n}}{1-\lambda}, 
    \end{align*} with $(i)$ following from the shared initialization. Next, noting that $\|\xit - \oxt\|^2 \leq \sum_{i \in [n]}\|\xit - \oxt\|^2 = \|\bxt - \obxt\|^2$, it then follows that, for any $i \in [n]$ and $t \geq 1$, we have
    \begin{equation*}
        \|\xitp - \oxtp\| \leq \|\bxtp - \obxtp\| \leq \frac{\alpha\gamma\lambda\sqrt{n}}{1-\lambda},
    \end{equation*} completing the proof.
\end{proof}

Let $\nbft \coloneqq \text{col}(\nabla f_1(x_1^t),\ldots,\nabla f_n(x_n^t))$ be the column stacking of agents' full gradients evaluated at their local models at time $t$. We are now ready to prove Lemma \ref{lm:consensus-bound}.

\begin{proof}[Proof of Lemma \ref{lm:consensus-bound}]
    Starting from the update rule \eqref{eq:gt-model-update} and defining $X^t \coloneqq \frac{\|\bxt - \obxt\|^2}{n}$, it follows that for any $t \geq 1$, we have
    \begin{align*}
        X^{t+1} \leq \frac{1+\lambda^2}{2}X^t + \frac{2\alpha^2\lambda^2}{n(1-\lambda^2)}\|\wbgt\|^2 \leq \frac{1+\lambda^2}{2}X^t + \frac{4\alpha^2\lambda^2}{n(1-\lambda^2)}\big(\|\wbgt - \nbft\|^2 + \|\nbft\|^2\big), 
    \end{align*} where $(i)$ follows from Young's inequality, i.e., $(a+b)^2 \leq (1+\theta)a^2 + (1+\theta^{-1})b^2$ with $\theta = \frac{1-\lambda^2}{2\lambda^2}$, while in $(ii)$ we again used Young's inequality with $\theta = 1$. Iterating the inequality recursively, denoting by $Y^t \coloneqq \|\wbgt - \nbft\|^2 + \|\nbft\|^2$ and noting that $X^1 = 0$, we get
    \begin{align}\label{eq:cons-gap-bdd}
        X^{t+1} \leq \frac{4\alpha^2\lambda^2}{n(1-\lambda^2)}\sum_{k = 1}^{t}\bigg(\frac{1+\lambda^2}{2}\bigg)^{t-k}Y^k. 
    \end{align} We next proceed to bound $Y^k$. To that end, we have
    \begin{align*}
        Y^k &= \|\wbgk - \nbfk\|^2 + \|\nbfk\|^2 = \sum_{i \in [n]}\big(\|\tgik - \nabla f_i(\xik)\|^2 + \|\nabla f_i(\xik)\|^2\big) \\
        &= \sum_{i \in [n]}\big(\|\tgikb + \tgiku\|^2 + \|\nabla f_i(\xik)\|^2\big) \leq 2\sum_{i \in [n]}\big(\|\tgikb\|^2 + \|\tgiku\|^2 + A^2 + B^2\|\nabla f(\xik)\|^2\big),
    \end{align*} where the last inequality follows from Assumption \ref{asmpt:bdd-het}. Plugging the above bound into \eqref{eq:cons-gap-bdd} completes the proof.
\end{proof}

\begin{remark}\label{rmk:consensus}
    Note that Lemma \ref{lm:consensus-crude} implies the following bound on the average consensus gap
    \begin{equation*}
        \frac{1}{n}\sum_{i \in [n]}\|\xit - \oxt\|^2 \leq \frac{\alpha^2\gamma^2\lambda^2n}{(1-\lambda)^2}.
    \end{equation*} While nice and simple, the main issue in this bound stems from the coupling of the step-size and clipping radius of the same order, namely $\alpha^2\gamma^2$. From the choice of the step-size in Theorem \ref{thm:non-cvx-hp}, specifically $\alpha \leq \frac{C_2}{\gamma}n^{\frac{2p-1}{3p-2}}T^{\frac{1-p}{3p-2}}$ for a problem-related constant $C_2 > 0$, it follows that
    \begin{equation*}
        \frac{\alpha^2\gamma^2\lambda^2n}{(1-\lambda)^2} = \bigO\Bigg(\frac{n^{\frac{7p-4}{3p-2}}T^{\frac{2(1-p)}{3p-2}}}{(1-\lambda)^2} \Bigg).
    \end{equation*} While the above quantity on the right-hand side matches the decay of the leading term in Theorem \ref{thm:non-cvx-hp} with respect to $T$, it strictly grows in $n$. As such, naively using the bound from Lemma \ref{lm:consensus-crude} in the rest of the analysis would result in a term which matches the leading one in $T$ but increases in $n$, destroying the linear speed-up, and propagating the network effect to the leading term (via $(1-\lambda)^{-2}$). On the other hand, ignoring some terms for simplicity, Lemma \ref{lm:consensus-bound} results in the following bound
    \begin{equation*}
        \frac{1}{n}\sum_{i \in [n]}\|\xit - \oxt\|^2 = \bigO\bigg(\frac{\alpha^2}{n}\sum_{i \in [n]}\big(\|\tgikb\|^2 + \|\tgiku\|^2\big)\bigg). 
    \end{equation*} When agents' gradients are sufficiently small, i.e., when $\|\nabla f_i(\xit)\| \leq \frac{\gamma}{2}$, we can use Lemma \ref{lm:sadiev} to get
    \begin{equation*}
        \frac{1}{n}\sum_{i \in [n]}\|\xit - \oxt\|^2 = \bigO\Big(\alpha^2\gamma^{2-p} + \alpha^2\gamma^{2(1-p)}\Big) = \bigO\Big(\alpha^2\gamma^{2-p}\Big). 
    \end{equation*} The above bound again couples the step-size and the clipping radius, however, this time the step-size decays at a strictly faster rate, ensuring that the consensus gap decays at a strictly faster rate than the leading term. As such, Lemma \ref{lm:consensus-bound} provides a more fine-grained control of the consensus gap and is crucial to achieving improved rates. However, Lemma \ref{lm:consensus-crude} is still useful, see, e.g., the proof of Lemma \ref{lm:desirable-event} ahead.  
\end{remark}

\subsection{High-Probability Results}\label{subsup:hp}

In this subsection we establish the HP convergence of clipped \dsgd. In particular, a standard approach in the centralized setting is to construct a desirable event which holds with high probability and on which $\|\nabla f(x^t)\| \leq \frac{\gamma}{2}$, for all $t \in [T]$, e.g., \citep{sadiev2023highprobability,nguyen2023improved}. However, in the decentralized setting we need a slightly stronger condition, namely that $\|\nabla f_i(\xit)\| \leq \frac{\gamma}{2}$, for all $i \in [n]$ and all $t \in [T]$. Constructing a desirable event, using the heterogeneity bound and Lemma \ref{lm:consensus-crude}, we will show that such condition indeed holds with high probability. We begin by first stating the celebrated Freedman's inequality for martingales. 

\begin{lemma}\label{lm:freedman}
    Let $\{X_t\}_{t \in \N}$ be a martingale difference sequence such that for some $a > 0$ and all $t \in \N$ we have $|X_t| \leq a$ almost surely, and let $\sigma^2_t = \E[X_t^2\:|\: X_1,\ldots,X_{t-1}]$. Then for any $b,c > 0$ and $T \geq 1$
    \begin{equation*}
        \Prob\Big(\Big|\sum_{t \in [T]}X_t\Big| > b \text{ and } \sum_{t \in [T]}\sigma^2_t \leq c \Big) \leq 2\exp\bigg(-\frac{b^2}{2c + 2ab/3} \bigg).
    \end{equation*}
\end{lemma}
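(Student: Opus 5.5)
The statement immediately above is Freedman's inequality (Lemma \ref{lm:freedman}). This is a classical result, so I would prove it by the standard exponential supermartingale argument.

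Let $S_T = \sum_{t \in [T]} X_t$ and $V_T = \sum_{t \in [T]} \sigma_t^2$. It suffices to bound the one-sided event $\{S_T > b,\ V_T \le c\}$ by $\exp(-b^2/(2c+2ab/3))$. Applying the same bound to the martingale difference sequence $\{-X_t\}$, which has the same bound $a$ and the same conditional variances, and taking a union bound gives the factor $2$.

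\textbf{Step 1: pointwise inequality.} For a real $x \le a$ and $\theta > 0$,
\begin{equation*}
e^{\theta x} \le 1 + \theta x + \frac{x^2}{a^2}\big(e^{\theta a} - 1 - \theta a\big).
\end{equation*}
This follows because $u \mapsto (e^u - 1 - u)/u^2$ is nondecreasing on $\R$. Write $\phi(\theta) \coloneqq (e^{\theta a} - 1 - \theta a)/a^2$.

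\textbf{Step 2: one-step conditional bound.} Take conditional expectations in Step 1 given $X_1,\ldots,X_{t-1}$. Since $\E[X_t \mid X_1,\ldots,X_{t-1}] = 0$, this gives
\begin{equation*}
\E\big[e^{\theta X_t} \mid X_1,\ldots,X_{t-1}\big] \le 1 + \phi(\theta)\sigma_t^2 \le \exp\big(\phi(\theta)\sigma_t^2\big).
\end{equation*}
Consequently $M_t \coloneqq \exp\big(\theta S_t - \phi(\theta) V_t\big)$ is a nonnegative supermartingale with $M_0 = 1$, so $\E[M_T] \le 1$. Here $V_t$ is predictable, which is what makes this step go through.

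\textbf{Step 3: Chernoff step and optimization.} On the event $\{S_T > b,\ V_T \le c\}$ we have $M_T \ge \exp(\theta b - \phi(\theta) c)$. Markov's inequality then gives
\begin{equation*}
\Prob\big(S_T > b,\ V_T \le c\big) \le \exp\big(-\theta b + \phi(\theta) c\big).
\end{equation*}
Next use the bound $e^u - 1 - u \le \frac{u^2/2}{1 - u/3}$ for $0 \le u < 3$. This yields
\begin{equation*}
\phi(\theta) c \le \frac{\theta^2 c/2}{1 - \theta a/3}.
\end{equation*}
Choose $\theta = b/(c + ab/3)$, which satisfies $\theta a < 3$. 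Substituting, $1 - \theta a/3 = c/(c + ab/3)$, so $\phi(\theta) c \le \theta b/2$. The exponent is therefore at most $-\theta b/2 = -b^2/(2c + 2ab/3)$, which is the claimed one-sided bound.

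\textbf{Main obstacle.} The only delicate point is this last calculus step: proving the inequality $e^u - 1 - u \le \frac{u^2}{2(1 - u/3)}$, which follows by comparing Taylor coefficients since $1/k! \le 1/(2\cdot 3^{k-2})$, and checking that the chosen $\theta$ gives exactly the stated constants. Everything else is the standard supermartingale machinery. Alternatively, one can simply cite Freedman (1975).
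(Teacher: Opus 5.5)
Your proof is correct. The monotonicity of $u \mapsto (e^u-1-u)/u^2$ gives the pointwise bound. Predictability of $V_t$ makes $\exp(\theta S_t - \phi(\theta) V_t)$ a supermartingale. The coefficient comparison $k! \geq 2\cdot 3^{k-2}$ yields $e^u-1-u \leq \frac{u^2}{2(1-u/3)}$ for $0 \le u < 3$. The choice $\theta = b/(c+ab/3)$ (which satisfies $\theta a<3$ since $c>0$) gives exactly the exponent $-b^2/(2c+2ab/3)$, and applying the same bound to $\{-X_t\}$ supplies the factor $2$.

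The paper does not prove this lemma; it invokes it as the classical Freedman inequality. So your argument is a self-contained version of the standard proof of the cited result, not an alternative to an argument in the paper.

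One thing your route buys over the lemma as written is generality in the filtration. Nothing in your argument uses that the conditioning is on $X_1,\ldots,X_{t-1}$ specifically. It holds verbatim for any filtration $\{\mathcal{G}_t\}$ to which $X_t$ is adapted, with $\E[X_t \mid \mathcal{G}_{t-1}]=0$ and $\sigma_t^2 = \E[X_t^2 \mid \mathcal{G}_{t-1}]$. That is the form the paper actually uses when bounding $B_1$, $B_2$, $B_3$, where the variance sums are taken with $\E_t = \E[\cdot \mid \Ft]$. The natural-filtration statement does not directly give this, because the two conditional-variance sums define different events.
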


Next, define $\Delta_t \coloneqq f(\oxt) - f^\star$. Prior to constructing the desirable event, we provide another useful intermediary result, building on Lemmas \ref{lm:descent-inequality} and \ref{lm:consensus-bound}.

\begin{lemma}\label{lm:descent-sharp}
    Let Assumptions \ref{asmpt:network}-\ref{asmpt:bdd-het} hold. If the step-size satisfies $\alpha \leq \min\Big\{\frac{1}{2L}, \frac{1-\lambda^2}{8\lambda LB\sqrt{3}} \Big\}$, then for any $T \geq 1$, we have
    \begin{align*}
        \frac{\alpha}{16n}&\sum_{t \in [T]}\sum_{i \in [n]}\|\nabla f(\xit)\|^2 \leq \Delta_1 - \Delta_{T+1} - \alpha\sum_{t \in [T]}\langle \nabla f(\oxt), \hgtu\rangle + 2\alpha^2L\sum_{t \in [T]}\|\hgtu\|^2 \\
        &+ 2\alpha\sum_{t \in [T]}\|\hgtb\|^2 + \frac{12\alpha^3\lambda^2L^2A^2T}{(1-\lambda^2)^2} + \frac{12\alpha^3\lambda^2L^2}{n(1-\lambda^2)^2}\sum_{t \in [T]}\sum_{i \in [n]}\big(\|\tgitb\|^2 + \|\tgitu\|^2\big),
    \end{align*}
\end{lemma}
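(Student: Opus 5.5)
We sum the descent inequality of Lemma \ref{lm:descent-inequality} over $t \in [T]$. This gives $\Delta_1 - \Delta_{T+1}$ on the right-hand side together with the martingale, $\|\hgtu\|^2$ and $\|\hgtb\|^2$ terms, which already appear in the claimed form. It also leaves $-\frac{\alpha}{4}\sum_t\|\nabla f(\oxt)\|^2$ on the right and the accumulated consensus term $\frac{\alpha L^2}{2}\sum_{t}X^t$, where $X^t = \frac1n\sum_i\|\xit-\oxt\|^2$. Two tasks remain. First, we must convert $\|\nabla f(\oxt)\|^2$ into the agent-averaged quantity $\frac1n\sum_i\|\nabla f(\xit)\|^2$. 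Second, we must control $\sum_t X^t$ using Lemma \ref{lm:consensus-bound}.

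For the first task, smoothness gives $\|\nabla f(\xit)\|^2 \le 2\|\nabla f(\oxt)\|^2 + 2L^2\|\xit-\oxt\|^2$. Averaging over $i$ yields
\begin{equation*}
    -\frac{\alpha}{4}\|\nabla f(\oxt)\|^2 \le -\frac{\alpha}{8n}\sum_{i\in[n]}\|\nabla f(\xit)\|^2 + \frac{\alpha L^2}{4}X^t .
\end{equation*}
After this step the total consensus coefficient is $\frac{3\alpha L^2}{4}\sum_t X^t$.

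For the second task, we sum the bound of Lemma \ref{lm:consensus-bound} over $t$, using $X^1=0$ and that $X^{t}$ is bounded by the lemma's right-hand side evaluated at $t-1$. We then exchange the order of summation. Each geometric factor sums to at most $\sum_{j\ge0}\big(\frac{1+\lambda^2}{2}\big)^j = \frac{2}{1-\lambda^2}$. This gives
\begin{equation*}
    \sum_{t\in[T]}X^t \le \frac{16\alpha^2\lambda^2}{(1-\lambda^2)^2}\Big(A^2T + \frac{B^2}{n}\sum_{t,i}\|\nabla f(\xit)\|^2 + \frac1n\sum_{t,i}\big(\|\tgitb\|^2+\|\tgitu\|^2\big)\Big).
\end{equation*}
Multiplying by $\frac{3\alpha L^2}{4}$ produces exactly the constants $\frac{12\alpha^3\lambda^2L^2A^2T}{(1-\lambda^2)^2}$ and $\frac{12\alpha^3\lambda^2L^2}{n(1-\lambda^2)^2}$ in the statement. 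It also produces a term $\frac{12\alpha^3\lambda^2L^2B^2}{n(1-\lambda^2)^2}\sum_{t,i}\|\nabla f(\xit)\|^2$.

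The main point to verify is that this $B^2$-term can be absorbed into the left-hand side. The step-size condition $\alpha\le \frac{1-\lambda^2}{8\lambda LB\sqrt3}$ gives $\alpha^2\lambda^2L^2B^2 \le \frac{(1-\lambda^2)^2}{192}$. The $B^2$-term is therefore at most $\frac{\alpha}{16n}\sum_{t,i}\|\nabla f(\xit)\|^2$. Subtracting it from $\frac{\alpha}{8n}\sum_{t,i}\|\nabla f(\xit)\|^2$ leaves $\frac{\alpha}{16n}\sum_{t,i}\|\nabla f(\xit)\|^2$ on the left, as claimed. The condition $\alpha \le \frac{1}{2L}$ is used only to invoke Lemma \ref{lm:descent-inequality}. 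The remaining care is in the index bookkeeping when shifting Lemma \ref{lm:consensus-bound} from $t+1$ to $t$, which only shrinks the sums to at most those over $[T]$.
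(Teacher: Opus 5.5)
Your proposal is correct and follows essentially the same route as the paper. You sum Lemma~\ref{lm:descent-inequality}, convert $\|\nabla f(\oxt)\|^2$ into $\frac{1}{2n}\sum_{i}\|\nabla f(\xit)\|^2 - L^2X^t$ via smoothness (giving the $\frac{3\alpha L^2}{4}$ consensus coefficient), apply Lemma~\ref{lm:consensus-bound} with an exchange of summation that contributes the extra factor $\frac{2}{1-\lambda^2}$, and absorb the $B^2$-term using $\alpha \leq \frac{1-\lambda^2}{8\lambda LB\sqrt{3}}$.

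Your handling of the index shift (using $X^1=0$ and bounding $X^t$ by the lemma at $t-1$) is slightly cleaner than the paper's. The paper applies the consensus bound with the inner sum running up to $t$, and its displayed summation-exchange identity is not literally an equality.
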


\begin{proof}
    Starting from Lemma \ref{lm:descent-inequality}, rearranging and summing up to time $T$, we get
    \begin{align*}
        \frac{\alpha}{4}\sum_{t \in [T]}\|\nabla f(\oxt)\|^2 &\leq \Delta_1 - \Delta_{T+1} - \alpha\sum_{t \in [T]}\langle \nabla f(\oxt), \hgtu\rangle \\ &+ 2\alpha^2L\sum_{t \in [T]}\|\hgtu\|^2 + 2\alpha\sum_{t \in [T]}\|\hgtb\|^2 + \frac{\alpha L^2}{2n}\sum_{t \in [T]}\|\bxt - \obxt\|^2. 
    \end{align*} Next, using the fact that $\|\nabla f(\oxt)\|^2 \geq \frac{1}{2n}\sum_{i \in [n]}\|\nabla f(\xit)\|^2 - \frac{L^2}{n}\|\bxt - \obxt\|^2$, it follows that
    \begin{align*}
        \frac{\alpha}{8n}&\sum_{t \in [T]}\sum_{i \in [n]}\|\nabla f(\xit)\|^2 \leq \Delta_1 - \Delta_{T+1} - \alpha\sum_{t \in [T]}\langle \nabla f(\oxt), \hgtu\rangle \\
        &+ 2\alpha^2L\sum_{t \in [T]}\|\hgtu\|^2 + 2\alpha\sum_{t \in [T]}\|\hgtb\|^2 + \frac{3\alpha L^2}{4n}\sum_{t \in [T]}\|\bxt - \obxt\|^2. 
    \end{align*} Applying Lemma \ref{lm:consensus-bound}, we have
    \begin{align*}
        \frac{\alpha}{8n}\sum_{t \in [T]}\sum_{i \in [n]}&\|\nabla f(\xit)\|^2 \leq \Delta_1 - \Delta_{T+1} - \alpha\sum_{t \in [T]}\langle \nabla f(\oxt), \hgtu\rangle + 2\alpha^2L\sum_{t \in [T]}\|\hgtu\|^2 + 2\alpha\sum_{t \in [T]}\|\hgtb\|^2 \\
        &+ \frac{6\alpha^3\lambda^2L^2}{n(1-\lambda^2)}\sum_{t \in [T]}\sum_{k = 1}^t\bigg(\frac{1+\lambda^2}{2}\bigg)^{t-k}\sum_{i \in [n]}\big(\|\tgikb\|^2 + \|\tgiku\|^2 + A^2 + B^2\|\nabla f(\xik)\|^2 \big) \\
        &\leq \Delta_1 - \Delta_{T+1} - \alpha\sum_{t \in [T]}\langle \nabla f(\oxt), \hgtu\rangle + 2\alpha^2L\sum_{t \in [T]}\|\hgtu\|^2 + 2\alpha\sum_{t \in [T]}\|\hgtb\|^2 \\
        &+ \frac{12\alpha^3\lambda^2L^2}{n(1-\lambda^2)^2}\sum_{i \in [n]}\sum_{t \in [T]}\big(\|\tgitb\|^2 + \|\tgitu\|^2 + A^2 + B^2\|\nabla f(\xit)\|^2 \big),
    \end{align*} where in the second inequality we used the fact that for any non-negative sequence $\{a_t\}_{t \in \N}$ and $\lambda \in [0,1)$
    \begin{equation*}
        \sum_{t \in [T]}\sum_{k \in [t]}a_k\lambda^{t-k} = \sum_{t \in [T]}a_t\sum_{k \in [t]}\lambda^{t-k} \leq \frac{1}{1-\lambda}\sum_{t \in [T]}a_t.
    \end{equation*} Rearranging the terms and noting that $\alpha \leq \frac{1-\lambda^2}{8\lambda LB\sqrt{3}}$ implies $\frac{\alpha}{8} - \frac{12\alpha^3\lambda^2L^2}{(1-\lambda^2)^2} \geq \frac{\alpha}{16}$, we then get
    \begin{align*}
        \frac{\alpha}{16n}&\sum_{t \in [T]}\sum_{i \in [n]}\|\nabla f(\xit)\|^2 \leq \Delta_1 - \Delta_{T+1} - \alpha\sum_{t \in [T]}\langle \nabla f(\oxt), \hgtu\rangle + 2\alpha^2L\sum_{t \in [T]}\|\hgtu\|^2 \\
        &+ 2\alpha\sum_{t \in [T]}\|\hgtb\|^2 + \frac{12\alpha^3\lambda^2L^2A^2T}{(1-\lambda^2)^2} + \frac{12\alpha^3\lambda^2L^2}{n(1-\lambda^2)^2}\sum_{i \in [n]}\sum_{t \in [T]}\big(\|\tgitb\|^2 + \|\tgitu\|^2\big),
    \end{align*} which completes the proof.
\end{proof}

Lemma \ref{lm:descent-sharp} provides a descent inequality which combines the network-averaged clipped quantities, as well as the strictly local ones. We are now ready to construct the desirable event and show that it holds with high probability.

\begin{lemma}\label{lm:desirable-event}
    Let Assumptions \ref{asmpt:network}-\ref{asmpt:noise} hold and let the step-size and clipping threshold be chosen as $\gamma = \max\Big\{2A + 4B\sqrt{L\Delta_1} + \frac{2\lambda L\sqrt{n}}{1-\lambda}, (nT)^{\frac{1}{3p-2}} \Big\} $ and $\alpha = \min\Big\{\frac{1}{2L}, \frac{1-\lambda^2}{8\lambda LB\sqrt{3}}, \frac{1-\lambda^2}{\lambda L\sqrt{6}}, \frac{(1-\lambda^2)^2}{6\lambda^2L}, \linebreak \frac{(1-\lambda^2)^{2/3}\Delta_1^{1/3}}{2(\lambda LA)^{2/3}T^{1/3}\sqrt[3]{12}}, \frac{C_1}{\gamma}n^{\frac{2p-1}{3p-2}}T^{\frac{1-p}{3p-2}}, \frac{C_2}{\gamma^{2/3}}n^{\frac{p}{3(3p-2)}}T^{\frac{2(1-p)}{3(3p-2)}},\frac{C_3}{\gamma} \Big\}$, where $C_1 = \min\Big\{\frac{\Delta_1}{64\sigma^{2p}}, \linebreak \frac{\sqrt{\Delta_1}}{192\log\big(\frac{6T}{\delta}\big)\sigma^{p/2}\sqrt{2L}} \Big\}$, $C_2 \leq \frac{\Delta_1^{1/3}(1-\lambda^2)^{2/3}}{12\sigma^{p/3}\lambda^{2/3}L^{2/3}}$ and $C_3 = \min\Big\{1,\frac{3\sqrt{\Delta_1}}{64\log\big(\frac{6T}{\delta}\big)\sqrt{2L}}\Big\}$, for $\delta \in (0,1)$. For any $N \in [T+1]$, let $\Gamma_N$ be the event that, for all $t \in [N]$ 
    \begin{align*}
        &- \alpha\sum_{k \in [t-1]}\langle \nabla f(\oxk), \hgku\rangle + 2\alpha^2L\sum_{k \in [t-1]}\|\hgku\|^2 + 2\alpha\sum_{k \in [t-1]}\|\hgkb\|^2 \\
        &+ \frac{12\alpha^3\lambda^2L^2A^2T}{(1-\lambda^2)^2} + \frac{12\alpha^3\lambda^2L^2}{n(1-\lambda^2)^2}\sum_{k \in [t-1]}\sum_{i \in [n]}\big(\|\tgikb\|^2 + \|\tgiku\|^2\big) \leq \Delta_1.
    \end{align*} It then holds that $\Prob(\Gamma_N) \geq 1 - \frac{(N-1)\delta}{T}$. Moreover, on the event $\Gamma_N$ we have $\|\nabla f_i(\xit)\| \leq \frac{\gamma}{2}$, for all $i \in [n]$ and $t \in [N]$.
\end{lemma}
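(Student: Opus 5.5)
We argue by induction on $N$, following the standard template of \cite{sadiev2023highprobability,nguyen2023improved}, adapted to the local gradients. The base case $N=1$ is immediate. The left-hand side of the event is then $\frac{12\alpha^3\lambda^2L^2A^2T}{(1-\lambda^2)^2}$, which is at most $\Delta_1/2$ by the step-size condition $\alpha \le \frac{(1-\lambda^2)^{2/3}\Delta_1^{1/3}}{2(\lambda LA)^{2/3}T^{1/3}\sqrt[3]{12}}$. Moreover $\xi^1=\ox^1$ by Assumption \ref{asmpt:initialization}.

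\textbf{Gradient bound on $\Gamma_N$.} Fix $t\in[N]$. Since $\Gamma_N\subseteq\Gamma_t$, the event inequality holds at index $t$. Lemma \ref{lm:descent-inequality}, summed up to $t-1$, then gives
\[
\Delta_t \le \Delta_1 + \Delta_1 = 2\Delta_1 .
\]
Here the consensus term $\frac{\alpha L^2}{2n}\sum\|\xik-\oxk\|^2$ is controlled exactly as in the proof of Lemma \ref{lm:descent-sharp}, using Lemma \ref{lm:consensus-bound}, and it is absorbed into the left-hand side. Smoothness yields $\|\nabla f(\oxt)\|\le\sqrt{2L\Delta_t}\le 2\sqrt{L\Delta_1}$. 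Lemma \ref{lm:consensus-crude} gives $\|\nabla f(\xit)-\nabla f(\oxt)\|\le L\alpha\gamma\lambda\sqrt n/(1-\lambda)$. Combining these with Assumption \ref{asmpt:bdd-het} gives
\[
\|\nabla f_i(\xit)\| \le A + B\|\nabla f(\xit)\|,
\]
and then
\[
\|\nabla f_i(\xit)\| \le A + 2B\sqrt{L\Delta_1} + \frac{BL\alpha\gamma\lambda\sqrt n}{1-\lambda}.
\]
The first two terms are at most $\gamma/4$ by the choice of $\gamma$. For the third term we use $\alpha\le C_3/\gamma\le 1/\gamma$ together with the step-size conditions involving $B$. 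The goal is to bound it by $\frac{\gamma}{4}$, where the factor $\frac{2\lambda L\sqrt n}{1-\lambda}$ appearing in $\gamma$ is what makes this work. If needed, one can instead bound the $B$-term through $\|\nabla f(\oxt)\|$ plus the consensus gap, since the $B$-factors are also handled by $\alpha\le\frac{1-\lambda^2}{8\lambda LB\sqrt3}$.

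\textbf{Probability bound.} We show $\Prob(\Gamma_{N}\setminus\Gamma_{N+1})\le\delta/T$. On $\Gamma_N$ all local gradients are at most $\gamma/2$ up to time $N$, so Lemma \ref{lm:sadiev} and the extended Lemma \ref{lm:noise-behavior} apply conditionally on $\Ft$. These give
\[
\|\hgtb\|,\ \|\tgitb\|\le 4\sigma^p\gamma^{1-p},\qquad \E\|\hgtu\|^2\le 18\gamma^{2-p}\sigma^p/n,\qquad \E\|\tgitu\|^2\le18\gamma^{2-p}\sigma_i^p .
\]
We introduce truncated versions of the random quantities, multiplying them by the indicator of $\Gamma_t$ (which is $\Ft$-measurable), so that they form bounded martingale difference sequences. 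We then split the left-hand side at time $N+1$ into five pieces:
\begin{itemize}
\item[(a)] the inner-product martingale $-\alpha\sum\langle\nabla f(\oxk),\hgku\rangle$. Its increments are bounded by $\alpha\cdot 2\sqrt{L\Delta_1}\cdot 2\gamma$, and its conditional variance is $\le 4L\Delta_1\alpha^2 T\cdot18\gamma^{2-p}\sigma^p/n$. We apply Freedman's inequality (Lemma \ref{lm:freedman}) with failure probability $\delta/(3T)$.
\item[(b)] $2\alpha^2L\sum\|\hgku\|^2$. We split it as $\sum\E\|\hgku\|^2$ plus the martingale $\sum(\|\hgku\|^2-\E\|\hgku\|^2)$, handling the latter again by Freedman with increments $\le4\gamma^2$.
\item[(c)] the deterministic bias term $2\alpha T\cdot16\sigma^{2p}\gamma^{2-2p}$.
\item[(d)] the $A^2$ term, which is already at most $\Delta_1/2$, or a suitable fraction of it.
\item[(e)] the local terms weighted by $\alpha^3$. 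These are treated like (b)–(c) using the local bounds, with a third Freedman application.
\end{itemize}
The total failure probability per step is $\delta/T$, which matches the $\log(6T/\delta)$ in the constants.

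\textbf{Parameter checking.} Each piece must be bounded by a fraction of $\Delta_1$ (e.g. $\Delta_1/6$), and this is where the step-size terms are used. $\alpha\le C_3/\gamma$ handles the Freedman range terms $a b$. $\alpha\le \frac{C_1}{\gamma}n^{\frac{2p-1}{3p-2}}T^{\frac{1-p}{3p-2}}$, together with $\gamma\ge(nT)^{1/(3p-2)}$, handles the variance and bias terms (a)–(c), since $\alpha^2T\gamma^{2-p}/n$ and $\alpha T\gamma^{2-2p}$ become $O(1)$. $\alpha\le \frac{C_2}{\gamma^{2/3}}(\cdots)$ handles the $\alpha^3\gamma^{2-p}T$ local variance term in (e). The main obstacle is this bookkeeping: verifying that every exponent of $n,T,\gamma$ cancels correctly, especially in the local term (e), where the $\alpha^3\lambda^2/(1-\lambda^2)^2$ weighting must offset the lack of the $1/n$ variance reduction. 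A secondary issue is making the Freedman variance bound $c$ deterministic, which the indicator truncation handles. Combining $\Prob(\Gamma_{N+1})\ge\Prob(\Gamma_N)-\delta/T$ with induction completes the argument.
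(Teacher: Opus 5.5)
Your skeleton matches the paper's. You argue by induction on $N$ and get $\Delta_t\le 2\Delta_1$ on $\Gamma_N$ from Lemma~\ref{lm:descent-sharp}. You then apply Freedman three times at level $\delta/(3T)$, to the inner-product, averaged-variance and local-variance martingales, with the same exponent bookkeeping. Your $\mathbb{I}_{\Gamma_t}$-truncation is a valid, arguably cleaner, substitute for the paper's $Z^t$.

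\textbf{The gap: the local gradient bound.} Everything else depends on this step, since Lemmas~\ref{lm:sadiev} and \ref{lm:noise-behavior} need $\|\nabla f_i(x_i^t)\|\le\gamma/2$. You apply Assumption~\ref{asmpt:bdd-het} at $x_i^t$, and only afterwards move $\nabla f$ from $x_i^t$ to $\overline{x}^t$. The consensus error therefore enters multiplied by $B$, as $\frac{BL\alpha\gamma\lambda\sqrt{n}}{1-\lambda}$.
\begin{itemize}
\item Your claim $A+2B\sqrt{L\Delta_1}\le\gamma/4$ is false. The choice of $\gamma$ only gives $A+2B\sqrt{L\Delta_1}\le\frac{\gamma}{2}-\frac{\lambda L\sqrt{n}}{1-\lambda}$, which exceeds $\gamma/4$ whenever the heterogeneity part dominates $\gamma$.
\item So the third term must be at most $\frac{\lambda L\sqrt{n}}{1-\lambda}$, i.e.\ you need $B\alpha\gamma\le 1$. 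Nothing in the hypotheses gives this: the step-size only ensures $\alpha\gamma\le C_3\le 1$, and $C_3=1$, $B>1$ are both allowed.
\item The $B$-dependent cap $\alpha\le\frac{1-\lambda^2}{8\lambda LB\sqrt3}$ does not help. It only bounds the term by $\frac{(1+\lambda)\sqrt{n}}{8\sqrt3}\,\gamma$, which grows with $n$.
\item Your fallback, bounding through $\|\nabla f(\overline{x}^t)\|$ plus the consensus gap, is exactly what produced the factor $B$.
\end{itemize}

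\textbf{The fix.} Reverse the order. Use the $L$-smoothness of $f_i$ itself to move to $\overline{x}^t$. Then apply heterogeneity only at the averaged point, where $\|\nabla f(\overline{x}^t)\|\le 2\sqrt{L\Delta_1}$ is available:
\[
\|\nabla f_i(x_i^t)\|\le L\|x_i^t-\overline{x}^t\|+\|\nabla f_i(\overline{x}^t)\|\le\frac{\alpha\gamma\lambda L\sqrt{n}}{1-\lambda}+A+B\|\nabla f(\overline{x}^t)\|\le\frac{\lambda L\sqrt{n}}{1-\lambda}+A+2B\sqrt{L\Delta_1}\le\frac{\gamma}{2}.
\]
This uses Lemma~\ref{lm:consensus-crude} and $\alpha\gamma\le C_3\le 1$. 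The last expression is exactly half the first argument of the max defining $\gamma$, so no $\gamma/4+\gamma/4$ split is needed. With this correction the rest of your argument goes through as in the paper. Your base case, which notices that the $A^2$ term is present at $N=1$ and is controlled by the $A$-dependent step-size cap, is in fact more careful than the paper's.
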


\begin{proof}
    We prove the result by induction on $N$. For $N = 1$ we have $\Delta_1 \geq 0$, which holds trivially. Next, assume that for some $N \in [T]$, we have $\Prob(\Gamma_N) \geq 1 - \frac{(N-1)\delta}{T}$. Using Lemma \ref{lm:descent-sharp} and rearranging, it follows that under $\Gamma_N$, we have for any $t \in [N]$
    \begin{align}\label{eq:good-event-property}
        \Delta_{t} &\leq \Delta_1 - \alpha\sum_{k \in [t-1]}\langle \nabla f(\oxk), \hgku\rangle + 2\alpha^2L\sum_{k \in [t-1]}\|\hgku\|^2 + 2\alpha\sum_{k \in [t-1]}\|\hgkb\|^2 \nonumber \\
        &+ \frac{12\alpha^3\lambda^2L^2A^2T}{(1-\lambda^2)^2} + \frac{12\alpha^3\lambda^2L^2}{n(1-\lambda^2)^2}\sum_{k \in [t-1]}\sum_{i \in [n]}\big(\|\tgikb\|^2 + \|\tgiku\|^2\big) \leq 2\Delta_1.
    \end{align} From the smoothness of $f$ and \eqref{eq:good-event-property}, it follows that for all $t \in [N]$ on $\Gamma_N$ we have
    \begin{equation}\label{eq:global-grad-bdd}
        \|\nabla f(\oxt)\| \leq \sqrt{2L\Delta_t} \leq 2\sqrt{L\Delta_1}.
    \end{equation} Using the heterogeneity bound, it can then be seen that for any $i \in [n]$ and $t \in [N]$
    \begin{align}\label{eq:local-grad-bdd}
        \|\nabla f_i(\xit)\| &\leq L\|\xit - \oxt\| + \|\nabla f_i(\oxt)\| \leq L\|\bxt - \obxt\| + A + B\|\nabla f(\oxt)\| \nonumber \\
        &\stackrel{(i)}{\leq} \frac{\alpha\gamma\lambda L\sqrt{n}}{1-\lambda} + A + 2B\sqrt{L\Delta_1} \stackrel{(ii)}{\leq} \frac{\lambda L\sqrt{n}}{1-\lambda} + A + 2B\sqrt{L\Delta_1} \stackrel{(iii)}{\leq} \frac{\gamma}{2},
    \end{align} where $(i)$ follows from Lemma \ref{lm:consensus-crude}, $(ii)$ follows from the choice of step-size, while $(iii)$ follows from the choice of clipping threshold. We will now show that $\Gamma_{N+1}$ holds with probability at least $1 - \frac{N\delta}{T}$. In particular, we will show that $\Gamma_N \cap A \subseteq \Gamma_{N+1}$ for some event $A$, allowing us to focus on the part of the probability space where $\Gamma_N$ holds. Using the bound in Lemma \ref{lm:descent-sharp} with $T = N$ and the fact that the left-hand side is non-negative, we get
    \begin{align*}
        &\quad\quad\Delta_{N+1} \leq \Delta_1 - \alpha\sum_{t \in [N]}\langle \nabla f(\oxt), \hgtu\rangle + 2\alpha^2L\sum_{t \in [N]}\|\hgtu\|^2 + 2\alpha\sum_{t \in [N]}\|\hgtb\|^2 \\
        &+ \frac{12\alpha^3\lambda^2L^2A^2T}{(1-\lambda^2)^2} + \frac{12\alpha^3\lambda^2L^2}{n(1-\lambda^2)^2}\sum_{t \in [N]}\sum_{i \in [n]}\|\tgitb\|^2 + \frac{12\alpha^3\lambda^2L^2}{n(1-\lambda^2)^2}\sum_{t \in [N]}\sum_{i \in [n]}\|\tgitu\|^2.
    \end{align*} Next, we construct martingale difference sequences by adding and subtracting $\E_t\|\hgtu\|^2$ and $\E_t\|\tgitu\|^2$. For ease of notation, define $\theta^t \coloneqq \|\hgtu\|^2 - \E_t\|\hgtu\|^2$ and $\eta^t_i \coloneqq \|\tgitu\|^2 - \E_t\|\tgitu\|^2$. We then have
    \begin{equation}\label{eq:induction-step-1}
    \begin{aligned}
        &\Delta_{N+1} \leq \Delta_1 - \alpha\sum_{t \in [N]}\langle \nabla f(\oxt), \hgtu\rangle + 2\alpha^2L\sum_{t \in [N]}\theta^t + 2\alpha^2L\sum_{t \in [N]}\E_t\|\hgtu\|^2 + 2\alpha\sum_{t \in [N]}\|\hgtb\|^2 \\
        &+ \frac{12\alpha^3\lambda^2L^2A^2T}{(1-\lambda^2)^2} + \frac{12\alpha^3\lambda^2L^2}{n(1-\lambda^2)^2}\sum_{t \in [N]}\sum_{i \in [n]}\|\tgitb\|^2 + \frac{12\alpha^3\lambda^2L^2}{n(1-\lambda^2)^2}\sum_{t \in [N]}\sum_{i \in [n]}\big(\eta^t_i + \E_t\|\tgitu\|^2\big).
    \end{aligned}
    \end{equation} Recalling that we are focused on the event where $\Gamma_N$ holds and using \eqref{eq:local-grad-bdd}, we can then apply Lemmas \ref{lm:noise-behavior} and \ref{lm:sadiev} to bound some of the terms in \eqref{eq:induction-step-1}, as follows
    \begin{equation*}
        \|\tgitb\|^2 \leq \frac{4\sigma^{2p}}{\gamma^{2(p-1)}}, \:\: \|\hgtb\|^2 \leq \frac{4\sigma^{2p}}{\gamma^{2(p-1)}}, \:\: \E_t\|\tgitu\|^2 \leq 18\gamma^{2-p}\sigma^p \: \text{ and } \E_t\|\hgtu\|^2 \leq \frac{18\gamma^{2-p}\sigma^p}{n}.  
    \end{equation*} Plugging back into \eqref{eq:induction-step-1} and recalling that $N \leq T$, we get
    \begin{align}\label{eq:induction-step-2}
        &\Delta_{N+1} \leq \Delta_1 - \alpha\sum_{t \in [N]}\langle \nabla f(\oxt), \hgtu\rangle + 2\alpha^2L\sum_{t \in [N]}\theta^t + \frac{36\alpha^2\gamma^{2-p}\sigma^pLT}{n} + \frac{8\alpha\sigma^{2p}T}{\gamma^{2(p-1)}} \nonumber \\
        &+ \frac{12\alpha^3\lambda^2L^2A^2T}{(1-\lambda^2)^2} + \frac{48\alpha^3\sigma^{2p}\lambda^2L^2T}{(1-\lambda^2)^2\gamma^{2(p-1)}} + \frac{216\alpha^3\gamma^{2-p}\sigma^p\lambda^2L^2T}{(1-\lambda^2)^2} + \frac{12\alpha^3\lambda^2L^2}{n(1-\lambda^2)^2}\sum_{t \in [N]}\sum_{i \in [n]}\eta^t_i.
    \end{align} We now focus on bounding the remaining three terms in \eqref{eq:induction-step-2}, i.e., $B_1 = \alpha\sum_{t \in [N]}\langle \nabla f(\oxt), \hgtu\rangle$, $B_2 = 2\alpha^2L\sum_{t \in [N]}\theta^t$ and $B_3 = \frac{12\alpha^3\lambda^2L^2}{n(1-\lambda^2)^2}\sum_{t \in [N]}\sum_{i \in [n]}\eta^t_i$.
    
    \paragraph{Bounding $B_1$.} We define the variable $Z^t = \begin{cases} \nabla f(\oxt), & \text{if } \Delta_t \leq 2\Delta_1 \\ 0, & \text{otherwise} \end{cases}$. From the definition of $Z^t$ it follows that $\|Z^t\| \leq 2\sqrt{L\Delta_1}$, while from the definition of $\hgtu$ it follows that $\|\hgtu\| \leq 2\gamma$, therefore $\langle Z^t,\hgtu\rangle \leq 4\gamma\sqrt{L\Delta_1}$, almost surely. Further, it can be seen that $\E_t\langle Z^t,\hgtu\rangle = 0$, making $\langle Z^t,\hgtu\rangle$ a bounded martingale difference sequence. Applying Lemma \ref{lm:freedman}, we get
    \begin{equation*}
        \Prob\Big(\Big|\sum_{t \in [N]}\alpha\langle Z^t, \hgtu \rangle\Big| > b \text{ and } \sum_{t \in [N]}\alpha^2\E_t\big[\langle Z^t, \hgtu\rangle^2\big] \leq c \Big) \leq 2\exp\bigg(-\frac{b^2}{2c + 8\alpha\gamma b\sqrt{L\Delta_1}/3} \bigg),
    \end{equation*} for any $b, c > 0$. Choosing $c = C\log\big(\frac{6T}{\delta}\big)$ for some $C > 0$ (to be specified later) and 
    \begin{equation}\label{eq:bound-B1}
        b = \log\bigg(\frac{6T}{\delta}\bigg)\Bigg(\frac{4}{3}\alpha\gamma\sqrt{L\Delta_1} + \sqrt{\frac{16}{9}\alpha^2\gamma^2L\Delta_1 + 2C}\Bigg),    
    \end{equation} results in
    \begin{equation}\label{eq:prob-bound-1}
        2\exp\bigg(-\frac{b^2}{2c + 8\alpha\gamma b\sqrt{L\Delta_1}/3} \bigg) = \frac{\delta}{3T}.
    \end{equation} Next, recalling \eqref{eq:good-event-property}, it follows that $Z^t = \nabla f(\oxt)$ for all $t \in [N]$ on $\Gamma_N$, therefore $B_1 = \alpha\sum_{t \in [N]}\langle -Z^t,\hgtu\rangle$ on $\Gamma_N$. Further, under $\Gamma_N$, it holds that
    \begin{align}\label{eq:var-ineq-1}
        \sum_{t \in [N]}\alpha^2\E_t\big[\langle -Z^t, \hgtu\rangle^2\big] &\leq \sum_{t \in [N]}\alpha^2\E_t\|Z^t\|^2\|\hgtu\|^2 \leq 4\alpha^2L\Delta_1\sum_{t \in [N]}\E_t\|\hgtu\|^2 \nonumber \\
        &\stackrel{(i)}{\leq} \frac{72\alpha^2\gamma^{2-p}\sigma^pL\Delta_1N}{n} \leq \frac{72\alpha^2\gamma^{2-p}\sigma^pL\Delta_1T}{n},
    \end{align} where $(i)$ follows from Lemma \ref{lm:noise-behavior}. Therefore, choosing $C = \frac{72\alpha^2\gamma^{2-p}\sigma^pL\Delta_1T}{n}$ and defining the event $\Xi_1 = \big\{\omega: B_1(\omega) \leq b \big\}$, where $b$ is given in \eqref{eq:bound-B1}, it follows that
    \begin{align*}
        \Prob(\Gamma_N \cap \Xi_1) &= \Prob\Big( \Gamma_N \cap \Big\{\sum_{t \in [N]}\alpha\langle-Z^t,\hgtu\rangle \leq b\Big\} \Big) \\ 
        &\stackrel{(a)}{=} \Prob\bigg( \Gamma_N \cap \bigg\{\sum_{t \in [N]}\alpha\langle-Z^t,\hgtu\rangle \leq b \:\text{ or } \sum_{t \in [N]}\alpha^2\langle -Z^t,\hgtu\rangle^2 > C\log\bigg(\frac{6T}{\delta}\bigg) \bigg\} \bigg) \\
        &\stackrel{(b)}{\geq} 1 - \Prob(\Gamma_N^c) - \Prob\bigg(\sum_{t \in [N]}\alpha\langle-Z^t,\hgtu\rangle > b \: \text{ and } \sum_{t \in [N]}\alpha^2\langle -Z^t,\hgtu\rangle^2 \leq C\log\bigg(\frac{6T}{\delta}\bigg) \bigg\}\bigg) \\
        &\stackrel{(c)}{\geq} 1 - \frac{(N-1)\delta}{T} - \frac{\delta}{3T} = 1 - \frac{(N-2/3)\delta}{T}.
    \end{align*} where $(a)$ follows from \eqref{eq:var-ineq-1} and the choice of $C$, in $(b)$ we used $\Prob(A\cap B) \geq 1 - \Prob(A^c) - \Prob(B^c)$, while $(c)$ follows from \eqref{eq:prob-bound-1} and the induction hypothesis. 
    
    \paragraph{Bounding $B_2$.} Recall that $B_2 = 2\alpha^2L\sum_{t \in [N]}\theta^t$, where $\theta^t = \|\hgtu\|^2 - \E_t\|\hgtu\|^2$. By definition, we know that $\E_t[\theta^t] = 0$ and $|2\alpha^2L\theta^t| \leq 16\alpha^2\gamma^2L$, hence we can apply Lemma \ref{lm:freedman}, to get
    \begin{equation*}
        \Prob\Big(\big| B_2 \big| > d \text{ and } \sum_{t \in [N]}4\alpha^4L^2\E_t\big[(\theta^t)^2\big] \leq e \Big) \leq 2\exp\bigg(-\frac{d^2}{2e + 32\alpha^2\gamma^2Ld/3} \bigg),
    \end{equation*} for any $d, e > 0$. Choosing $e = E\log\big(\frac{6T}{\delta}\big)$ for some $E > 0$ (to be specified later) and  
    \begin{equation}\label{eq:bound-B2}
        d = \log\bigg(\frac{6T}{\delta}\bigg)\Bigg(\frac{16}{3}\alpha^2\gamma^2L + \sqrt{\frac{256}{9}\alpha^4\gamma^4L^2 + 2E}\Bigg),    
    \end{equation} results in $2\exp\Big(-\frac{d^2}{2e + 32\alpha^2\gamma^2Ld/3} \Big) = \frac{\delta}{3T}$. Further, under $\Gamma_N$, it holds that
    \begin{align*}
        \sum_{t \in [N]}\E_t\big[(2\alpha^2L\theta^t)^2\big] &\stackrel{(i)}{\leq} 16\alpha^2\gamma^2L\sum_{t \in [N]}\E_t|2\alpha^2L\theta^t| \stackrel{(ii)}{\leq} 32\alpha^4\gamma^2L^2\sum_{t \in [N]}\E_t\|\hgtu\|^2 \\
        &\leq \frac{576\alpha^4\gamma^{4-p}\sigma^pL^2N}{n} \leq \frac{576\alpha^4\gamma^{4-p}\sigma^pL^2T}{n},
    \end{align*} where $(i)$ follows from $|2\alpha^2L\theta^t| \leq 16\alpha^2\gamma^2L$, in $(ii)$ we used $\E|X-\E X| \leq 2\E X$ for a non-negative random variable, while $(iii)$ follows from Lemma \ref{lm:noise-behavior}. Therefore, choosing $E = \frac{576\alpha^4\gamma^{4-p}\sigma^pL^2T}{n}$, defining the event $\Xi_2 = \big\{\omega: B_2(\omega) \leq d \big\}$ where $d$ is given in \eqref{eq:bound-B2} and following similar steps as in the previous case, it can be shown that $\Prob(\Gamma_N \cap \Xi_1 \cap \Xi_2) \geq 1 - \frac{(N-1/3)\delta}{T}$.
    
    \paragraph{Bounding $B_3$.} Finally, consider the term $B_3 = \frac{12\alpha^3\lambda^2L^2}{n(1-\lambda^2)^2}\sum_{t \in [N]}\sum_{i \in [n]}\eta^t_i$, where $\eta^t_i = \|\tgitu\|^2 - \E_t\|\tgitu\|^2$. By definition, we know that $\E_t[\eta^t_i] = 0$ and $\Big|\frac{12\alpha^3\lambda^2L^2}{n(1-\lambda^2)^2}\eta^t_i\Big| \leq \frac{96\alpha^3\gamma^2\lambda^2L^2}{n(1-\lambda^2)^2}$, hence we can apply Lemma \ref{lm:freedman}, to get
    \begin{equation*}
        \Prob\bigg(\big|B_3 \big| > g \text{ and } \sum_{t \in [N]}\E_t\bigg(\frac{12\alpha^3\lambda^2L^2}{n(1-\lambda^2)^2}\sum_{i \in [n]}\eta^t_i\bigg)^2 \leq h \bigg) \leq 2\exp\bigg(-\frac{g^2}{2h + 192\alpha^3\gamma^2\lambda^2L^2g/3n(1-\lambda^2)^2} \bigg),
    \end{equation*} for any $g,h > 0$. Moreover, from Assumption \ref{asmpt:noise} we can conclude that $\{\eta^t_i\}_{i \in [n]}$ are independent conditioned on $\Ft$. Combined with $\E_t[\eta^t_i] = 0$, it follows that on $\Gamma_N$
    \begin{align*}
        &\sum_{t \in [N]}\E_t\bigg(\frac{12\alpha^3\lambda^2L^2}{n(1-\lambda^2)^2}\sum_{i \in [n]}\eta^t_i\bigg)^2 = \frac{144\alpha^6\lambda^4L^4}{n^2(1-\lambda^2)^4}\sum_{t \in [N]}\sum_{i \in [n]}\E_t\big[(\eta^t_i)^2\big] \\
        &\stackrel{(i)}{\leq} \frac{1152\alpha^6\gamma^2\lambda^4L^4}{n^2(1-\lambda^2)^4}\sum_{t \in [N]}\sum_{i \in [n]}\E_t\big[|\eta^t_i|\big] \stackrel{(ii)}{\leq} \frac{2304\alpha^6\gamma^2\lambda^4L^4}{n^2(1-\lambda^2)^4}\sum_{t \in [N]}\sum_{i \in [n]}\E_t\|\tgitu\|^2 \\
        &\quad\quad\quad\quad\quad \stackrel{(iii)}{\leq} \frac{41472\alpha^6\gamma^{4-p}\sigma^p\lambda^4L^4N}{n(1-\lambda^2)^4} \leq \frac{41472\alpha^6\gamma^{4-p}\sigma^p\lambda^4L^4T}{n(1-\lambda^2)^4},
    \end{align*} where in $(i)$ we used the fact that $|\eta^t_i| \leq 8\gamma^2$, $(ii)$ follows from $\E|X - \E X| \leq 2\E X$ for $X \geq 0$, while in $(iii)$ we used Lemma \ref{lm:sadiev}. Following the same reasoning as in the previous two cases, it can be seen that choosing $h = H\log\big(\frac{6T}{\delta}\big)$, where $H = \frac{41472\alpha^6\gamma^{4-p}\sigma^p\lambda^4L^4T}{n(1-\lambda^2)^4}$ and  
    \begin{equation}\label{eq:bound-B3}
        g = \log\bigg(\frac{6T}{\delta}\bigg)\Bigg(\frac{32\alpha^3\gamma^2\lambda^2L^2}{n(1-\lambda^2)^2} + \sqrt{\frac{1024\alpha^6\gamma^4\lambda^4L^4}{n^2(1-\lambda^2)^4} + 2H}\Bigg), 
    \end{equation} results in $2\exp\Big(-\frac{g^2}{2h + 192\alpha^3\gamma^2\lambda^2L^2g/3n(1-\lambda^2)^2} \Big) = \frac{\delta}{3T}$. Therefore, defining $\Xi_3 = \{\omega: B_3(\omega) \leq g\}$ and following similar reasoning as before, it can be concluded that
    \begin{equation}\label{eq:desired-probability-bound}
        \Prob(\Gamma_N \cap \Xi_1 \cap \Xi_2 \cap \Xi_3) \geq 1 - \frac{N\delta}{T}.
    \end{equation}
    
    \paragraph{Completing the induction.} Consider the event $\Gamma_N \cap \Xi_1 \cap \Xi_2 \cap \Xi_3$. From \eqref{eq:desired-probability-bound} we know that $\Prob(\Gamma_N \cap \Xi_1 \cap \Xi_2 \cap \Xi_3) \geq 1 - \frac{N\delta}{T}$, while from \eqref{eq:induction-step-2} we know that on this event 
    \begin{align}\label{eq:induction-step-3}
        &\Delta_{N+1} \leq \Delta_1 + b + d + g + \frac{36\alpha^2\gamma^{2-p}\sigma^pLT}{n} + \frac{8\alpha\sigma^{2p}T}{\gamma^{2(p-1)}} \nonumber \\
        &+ \frac{12\alpha^3\lambda^2L^2A^2T}{(1-\lambda^2)^2} + \frac{48\alpha^3\sigma^{2p}\lambda^2L^2T}{(1-\lambda^2)^2\gamma^{2(p-1)}} + \frac{216\alpha^3\gamma^{2-p}\sigma^p\lambda^2L^2T}{(1-\lambda^2)^2},
    \end{align} where $b,d,g$ are given in \eqref{eq:bound-B1}, \eqref{eq:bound-B2} and \eqref{eq:bound-B3}. Note that if we can choose $\alpha$ and $\gamma$ such that the right-hand side in \eqref{eq:induction-step-3} is less than or equal to $2\Delta_1$, it would then follow that on $\Gamma_N \cap \Xi_1 \cap \Xi_2 \cap \Xi_3$ we have $\Delta_{N+1} \leq 2\Delta_1$, hence $\Gamma_N \cap \Xi_1 \cap \Xi_2 \cap \Xi_3 \subseteq \Gamma_{N+1}$ and $\Prob(\Gamma_{N+1}) \geq \Prob(\Gamma_N \cap \Xi_1 \cap \Xi_2 \cap \Xi_3) \geq 1 - \frac{N\delta}{T}$, completing the induction. Therefore, our aim is to show that there exist $\alpha,\gamma$ such that
    \begin{align}\label{eq:induction-step-4}
        b &+ d + g + \frac{36\alpha^2\gamma^{2-p}\sigma^pLT}{n} + \frac{8\alpha\sigma^{2p}T}{\gamma^{2(p-1)}} + \frac{12\alpha^3\lambda^2L^2A^2T}{(1-\lambda^2)^2} \nonumber \\
        &+ \frac{48\alpha^3\sigma^{2p}\lambda^2L^2T}{(1-\lambda^2)^2\gamma^{2(p-1)}} + \frac{216\alpha^3\gamma^{2-p}\sigma^p\lambda^2L^2T}{(1-\lambda^2)^2} \leq \Delta_1,
    \end{align} where $b,d,g$ are given in \eqref{eq:bound-B1}, \eqref{eq:bound-B2} and \eqref{eq:bound-B3}. Let the clipping threshold and step-size satisfy $\gamma \geq \max\Big\{2A + 4B\sqrt{L\Delta_1} + \frac{2\lambda L\sqrt{n}}{1-\lambda}, (nT)^{\frac{1}{3p-2}} \Big\}$ and 
    \begin{equation*}
        \alpha \leq \min\bigg\{C_1n^{\frac{2p-1}{3p-2}}T^{\frac{1-p}{3p-2}}\gamma^{-1}, C_2n^{\frac{p}{3(3p-2)}}T^{\frac{2(1-p)}{3(3p-2)}}\gamma^{-\frac{2}{3}}, \frac{1}{2L}, \frac{1-\lambda^2}{8\lambda LB\sqrt{3}} \bigg\},
    \end{equation*} for some $C_1,C_2 > 0$. It can then be verified that $\alpha^2\gamma^{2-p} \leq nC_1^{2}T^{-1}$, $\alpha\gamma^{2(1-p)} \leq C_1T^{-1}$ and $\alpha^3\gamma^{2-p} \leq C_2^3T^{-1}$, hence choosing $C_1 \leq \min\Big\{\frac{\sqrt{\Delta_1}}{12\sigma^{\nicefrac{p}{2}}\sqrt{2L}}, \frac{\Delta_1}{64\sigma^{2p}} \Big\}$, $C_2 \leq \frac{\Delta_1^{1/3}(1-\lambda^2)^{2/3}}{12\sigma^{p/3}\lambda^{2/3}L^{2/3}}$ and $\alpha \leq \min\bigg\{\sqrt[3]{\frac{(1-\lambda^2)^2\Delta_1}{96\lambda^2L^2A^2T}},\frac{1-\lambda^2}{\lambda L\sqrt{6}}\bigg\}$, it can be verified that \eqref{eq:induction-step-4} becomes $b + d + g \leq \frac{3\Delta_1}{8}$. We now bound each of these terms. First, recall that
    \begin{equation*}
        b = \log\bigg(\frac{6T}{\delta}\bigg)\Bigg(\frac{4}{3}\alpha\gamma\sqrt{L\Delta_1} + \sqrt{\frac{16}{9}\alpha^2\gamma^2L\Delta_1 + 2C}\Bigg),    
    \end{equation*} where $C = \frac{72\alpha^2\gamma^{2-p}\sigma^pL\Delta_1T}{n} \leq 72C_1^2\sigma^pL\Delta_1$. Further, if $\alpha \leq C_3\gamma^{-1}$, where $C_3 \leq \frac{3\sqrt{\Delta_1}}{64\log(\frac{6T}{\delta})\sqrt{2L}}$ and $C_1 \leq \frac{\sqrt{\Delta_1}}{192\log(\frac{6T}{\delta})\sigma^{p/2}\sqrt{2L}}$, it follows that
    \begin{equation}\label{eq:bound-1}
        b \leq \log\bigg(\frac{6T}{\delta}\bigg)\Bigg(\frac{4}{3}C_3\sqrt{L\Delta_1} + \sqrt{\frac{16}{9}C_3^2L\Delta_1 + 144C_1^2\sigma^pL\Delta_1}\Bigg) \leq \frac{\Delta_1}{8}.
    \end{equation} Next, recall that 
     \begin{equation*}
        d = \log\bigg(\frac{6T}{\delta}\bigg)\Bigg(\frac{16}{3}\alpha^2\gamma^2L + \sqrt{\frac{256}{9}\alpha^4\gamma^4L^2 + 2E}\Bigg),    
    \end{equation*} where $E = \frac{576\alpha^4\gamma^{4-p}\sigma^pL^2T}{n} \leq 576C_1^2C_3^2\sigma^pL^2 \leq \frac{288C_3^2L\Delta_1}{192^2\log^2(\frac{6T}{\delta})}$. From the previous choice of $C_3$, it can be verified that
    \begin{equation}\label{eq:bound-2}
        d \leq \log\bigg(\frac{6T}{\delta}\bigg)\Bigg(\frac{16}{3}C_3^2L + \sqrt{\frac{256}{9}C_3^4L^2 + \frac{C_3^2L\Delta_1}{64\log^2(\frac{6T}{\delta})}}\Bigg) \leq \frac{\Delta_1}{8}.
    \end{equation} Finally, recall that   
    \begin{equation*}
        g = \log\bigg(\frac{6T}{\delta}\bigg)\Bigg(\frac{32\alpha^3\gamma^2\lambda^2L^2}{n(1-\lambda^2)^2} + \sqrt{\frac{1024\alpha^6\gamma^4\lambda^4L^4}{n^2(1-\lambda^2)^4} + 2H}\Bigg), 
    \end{equation*} where $H = \frac{41472\alpha^6\gamma^{4-p}\sigma^p\lambda^4L^4T}{n(1-\lambda^2)^4} \leq \frac{41472\alpha^2C_1^2C_3^2\sigma^p\lambda^4L^4}{(1-\lambda^2)^4} \leq \frac{9\Delta_1\alpha^2C_3^2\lambda^4L^3}{16(1-\lambda^2)^4\log^2(\frac{6T}{\delta})} \leq \frac{\Delta_1C_3^2L}{64\log^2(\frac{6T}{\delta})}$, where the last inequality follows from $\alpha \leq \frac{(1-\lambda^2)^2}{6\lambda^2L}$. Using the same step-size condition, we then have
    \begin{equation}\label{eq:bound-3}
        g \leq \log\bigg(\frac{6T}{\delta}\bigg)\Bigg(\frac{16C_3^2L}{3} + \sqrt{\frac{256}{9}C_3^4L^2 + \frac{C_3^2L\Delta_1}{32\log^2(\frac{6T}{\delta})}} \Bigg) \leq \frac{\Delta_1}{8}.
    \end{equation} Combining \eqref{eq:bound-1}, \eqref{eq:bound-2} and \eqref{eq:bound-3}, we finally get $b + d + g \leq \frac{3\Delta_1}{8}$, completing the proof.
\end{proof}

We are now ready to prove Theorem \ref{thm:non-cvx-hp}. Prior to that, we state the full version, including the higher-order terms omitted in the main body. 

{
\renewcommand{\thetheorem}{1}
\begin{theorem}
    Let Assumptions \ref{asmpt:network}-\ref{asmpt:noise} hold. If for any $T \geq 1$ and $\delta \in (0,1)$, the clipping threshold and the step-size are chosen as $\gamma = \max\Big\{2A + 4B\sqrt{L\Delta_1} + \frac{2\lambda L\sqrt{n}}{1-\lambda}, (nT)^{\frac{1}{3p-2}} \Big\}$ and $\alpha = \min\Big\{\frac{1}{2L}, \frac{1-\lambda^2}{8\lambda LB\sqrt{3}}, \frac{1-\lambda^2}{\lambda L\sqrt{6}}, \frac{(1-\lambda^2)^2}{6\lambda^2L}, \frac{(1-\lambda^2)^{2/3}\Delta_1^{1/3}}{2(\lambda LA)^{2/3}T^{1/3}\sqrt[3]{12}}, \frac{C_1}{\gamma}n^{\frac{2p-1}{3p-2}}T^{\frac{1-p}{3p-2}},\frac{C_2}{\gamma^{2/3}}n^{\frac{p}{3(3p-2)}}T^{\frac{2(1-p)}{3(3p-2)}},\frac{C_3}{\gamma} \Big\}$, where $C_1 = \min\Big\{\frac{\Delta_1}{64\sigma^{2p}}, \frac{\sqrt{\Delta_1}}{192\log\big(\frac{6T}{\delta}\big)\sigma^{p/2}\sqrt{2L}} \Big\}$, $C_2 = \frac{\Delta_1^{1/3}(1-\lambda^2)^{2/3}}{12\sigma^{p/3}\lambda^{2/3}L^{2/3}}$ and $C_3 = \min\Big\{1,\frac{3\sqrt{\Delta_1}}{64\log\big(\frac{6T}{\delta}\big)\sqrt{2L}}\Big\}$, then with probability at least $1 - \delta$
    \begin{align*}
        \frac{1}{nT}\sum_{t \in [T]}\sum_{i \in [n]}\|\nabla f(\xit)\|^2 = \widetilde{\bigO}\Bigg(\max\Bigg\{(nT)^{\frac{2(1-p)}{3p-2}}, \frac{n^{\frac{1}{3p-2}}}{T^{\frac{3(p-1)}{3p-2}}}, \frac{n^{\frac{-p}{6p-4}}T^{\frac{1-2p}{3p-2}}}{(1-\lambda)},\\ \frac{n^{\frac{2(p-1)}{3(3p-2)}}}{(1-\lambda)^{\frac{4}{3}}T^{\frac{7p-4}{3(3p-2)}}}, \frac{n^{\frac{2-p}{3(3p-2)}}}{(1-\lambda^2)^{\frac{2}{3}}T^{\frac{7p-6}{3(3p-2)}}}, \frac{T^{-2/3}}{(1-\lambda^2)^{2/3}}, \frac{T^{-1}}{(1-\lambda^2)^2}, \frac{\sqrt{n}T^{-1}}{1-\lambda} \Bigg\}\Bigg).
    \end{align*} 
\end{theorem}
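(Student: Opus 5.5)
The plan is to work on the event $\Gamma_{T+1}$ from Lemma \ref{lm:desirable-event}. The step-size and clipping radius in the theorem satisfy the lemma's hypotheses, so $\Prob(\Gamma_{T+1}) \geq 1 - \delta$. On this event the sum of noise, bias and consensus terms appearing on the right-hand side of Lemma \ref{lm:descent-sharp} (with $T$ in place of $N$) is at most $\Delta_1$. Since $\Delta_{T+1} \geq 0$, Lemma \ref{lm:descent-sharp} then yields, on $\Gamma_{T+1}$,
\begin{equation*}
    \frac{1}{nT}\sum_{t \in [T]}\sum_{i \in [n]}\|\nabla f(\xit)\|^2 \leq \frac{16}{\alpha T}\big(\Delta_1 + \Delta_1\big) = \frac{32\Delta_1}{\alpha T}.
\end{equation*}
More precisely, the bound of Lemma \ref{lm:descent-sharp} is $\Delta_1$ plus the quantity controlled in the proof of Lemma \ref{lm:desirable-event}, and that quantity is at most $\Delta_1$ on $\Gamma_{T+1}$. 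So everything reduces to a lower bound on $\alpha$. The theorem is therefore a rate statement obtained by inverting the minimum defining $\alpha$:
\begin{equation*}
    \frac{1}{\alpha T} = \max_j \frac{1}{\alpha_j T},
\end{equation*}
where $\alpha_j$ ranges over the eight candidates.

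Next I will evaluate each term $\frac{1}{\alpha_j T}$ using $\gamma = \max\{C_d, (nT)^{\frac{1}{3p-2}}\}$ with $C_d = 2A + 4B\sqrt{L\Delta_1} + \frac{2\lambda L\sqrt{n}}{1-\lambda}$. For $T$ large the dominant value is $\gamma \asymp (nT)^{\frac{1}{3p-2}}$, but to be safe I will also bound $\gamma \leq C_d + (nT)^{\frac{1}{3p-2}}$ and keep the $C_d$ contribution as a higher-order term. This is where the $\frac{\sqrt{n}T^{-1}}{1-\lambda}$ entry comes from: the $C_3/\gamma$ choice together with $\gamma \gtrsim \frac{\lambda L\sqrt{n}}{1-\lambda}$. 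The $C_3/\gamma$ branch with $\gamma = (nT)^{\frac{1}{3p-2}}$ gives $\frac{(nT)^{\frac{1}{3p-2}}}{T} = n^{\frac{1}{3p-2}}T^{-\frac{3(p-1)}{3p-2}}$, up to the $\log(6T/\delta)$ inside $C_3$, which is hidden by $\widetilde{\bigO}$. The leading branch is $\frac{C_1}{\gamma}n^{\frac{2p-1}{3p-2}}T^{\frac{1-p}{3p-2}}$. It gives
\begin{equation*}
    \frac{\gamma}{C_1 T}\, n^{-\frac{2p-1}{3p-2}}T^{\frac{p-1}{3p-2}} = \frac{1}{C_1}\, n^{\frac{1-2p+1}{3p-2}}T^{\frac{1+p-1-3p+2}{3p-2}} = \frac{1}{C_1}(nT)^{\frac{2(1-p)}{3p-2}},
\end{equation*}
which is the optimal term. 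When $\gamma = C_d$ it instead yields $\frac{\sqrt{n}}{1-\lambda}\, n^{-\frac{2p-1}{3p-2}}T^{-\frac{2p-1}{3p-2}}$, which accounts for the $\frac{n^{-p/(6p-4)}T^{(1-2p)/(3p-2)}}{1-\lambda}$ entry.

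The $\gamma^{-2/3}$ branch, with $C_2 \propto (1-\lambda^2)^{2/3}$, gives
\begin{equation*}
    \frac{\gamma^{2/3}\, n^{-\frac{p}{3(3p-2)}}\, T^{\frac{2(p-1)}{3(3p-2)}}}{(1-\lambda^2)^{2/3}\,T}.
\end{equation*}
Plugging in the two cases for $\gamma$ should produce the $(1-\lambda)^{-4/3}$ entry and the $(1-\lambda^2)^{-2/3}$ entry with the stated exponents. The constant branches $\frac{1}{2L}$, $\frac{1-\lambda^2}{\lambda L}$ and $\frac{(1-\lambda^2)^2}{\lambda^2 L}$ give $T^{-1}$ and $\frac{T^{-1}}{(1-\lambda^2)^2}$. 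The branch $T^{-1/3}(1-\lambda^2)^{2/3}$ gives $\frac{T^{-2/3}}{(1-\lambda^2)^{2/3}}$.

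The main obstacle is not probabilistic, since that is already handled by Lemma \ref{lm:desirable-event}. It is bookkeeping: checking that the constants in the theorem coincide with those required by the lemma, with the $C_1, C_2, C_3$ labels permuted relative to the main-text statement. It also requires tracking the two regimes of $\gamma$ in each branch so that every resulting term is listed, and absorbing $A$, $B$, $L$, $\sigma$, $\Delta_1$ and the logarithmic factors into $\widetilde{\bigO}$. Finally, I will verify that every non-leading term decays strictly faster in $T$ than $(nT)^{\frac{2(1-p)}{3p-2}}$, which justifies the abbreviated main-text form.
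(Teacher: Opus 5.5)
Your proposal is correct and follows the paper's proof. On the event $\Gamma_{T+1}$ of Lemma~\ref{lm:desirable-event}, which has probability at least $1-\delta$, Lemma~\ref{lm:descent-sharp} yields $\frac{1}{nT}\sum_{t \in [T]}\sum_{i \in [n]}\|\nabla f(\xit)\|^2 \leq \frac{32\Delta_1}{\alpha T}$, and your branch-by-branch evaluation of $\frac{1}{\alpha T}$ in both regimes of $\gamma$ reproduces exactly the paper's cases \eqref{eq:bound-case-1}--\eqref{eq:bound-case-5}.
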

}

\begin{proof}
    Starting from Lemma \ref{lm:descent-sharp}, we have
    \begin{align*}
        \frac{\alpha}{16n}&\sum_{t \in [T]}\sum_{i \in [n]}\|\nabla f(\xit)\|^2 \leq \Delta_1 - \Delta_{T+1} - \alpha\sum_{t \in [T]}\langle \nabla f(\oxt), \hgtu\rangle + 2\alpha^2L\sum_{t \in [T]}\|\hgtu\|^2 \\
        &+ 2\alpha\sum_{t \in [T]}\|\hgtb\|^2 + \frac{12\alpha^3\lambda^2L^2A^2T}{(1-\lambda^2)^2} + \frac{12\alpha^3\lambda^2L^2}{n(1-\lambda^2)^2}\sum_{i \in [n]}\sum_{t \in [T]}\big(\|\tgitb\|^2 + \|\tgitu\|^2\big).
    \end{align*} Using Lemma \ref{lm:desirable-event}, we know that, with probability at least $1 - \delta$
    \begin{align*}
        &\frac{\alpha}{16n}\sum_{t \in [T]}\sum_{i \in [n]}\|\nabla f(\xit)\|^2 \leq \Delta_1 - \Delta_{T+1} - \alpha\sum_{t \in [T]}\langle \nabla f(\oxt), \hgtu\rangle + 2\alpha^2L\sum_{t \in [T]}\|\hgtu\|^2 \\
        &+ 2\alpha\sum_{t \in [T]}\|\hgtb\|^2 + \frac{12\alpha^3\lambda^2L^2A^2T}{(1-\lambda^2)^2} + \frac{12\alpha^3\lambda^2L^2}{n(1-\lambda^2)^2}\sum_{i \in [n]}\sum_{t \in [T]}\big(\|\tgitb\|^2 + \|\tgitu\|^2\big) \leq 2\Delta_1.
    \end{align*} Dividing both sides by $\frac{\alpha T}{16}$, we get that, with probability at least $1 - \delta$
    \begin{equation*}
        \frac{1}{nT}\sum_{t \in [T]}\sum_{i \in [n]}\|\nabla f(\xit)\|^2 \leq \frac{32\Delta_1}{\alpha T}.
    \end{equation*} We now consider the different conditions on the step-size and clipping threshold. First, if $\alpha = \textit{constant}$, the above bound evaluates to (in the worst-case with respect to $n$ and $1-\lambda$)
    \begin{equation}\label{eq:bound-case-1}
        \frac{1}{nT}\sum_{t \in [T]}\sum_{i \in [n]}\|\nabla f(\xit)\|^2 = \bigO\bigg( \frac{1}{(1-\lambda^2)^2T}\bigg).
    \end{equation} Next, if $\alpha = \frac{(1-\lambda^2)^{2/3}\Delta_1^{1/3}}{2(\lambda LA)^{2/3}T^{1/3}\sqrt[3]{12}}$, then
    \begin{equation}\label{eq:bound-case-2}
        \frac{1}{nT}\sum_{t \in [T]}\sum_{i \in [n]}\|\nabla f(\xit)\|^2 = \bigO\bigg( \frac{1}{(1-\lambda^2)^{2/3}T^{2/3}}\bigg).
    \end{equation} Third, if $\alpha = C_1n^{\frac{2p-1}{3p-2}}T^{\frac{1-p}{3p-2}}\gamma^{-1}$, then 
    \begin{equation}\label{eq:bound-case-3}
        \frac{1}{nT}\sum_{t \in [T]}\sum_{i \in [n]}\|\nabla f(\xit)\|^2 = \widetilde{\bigO}\Bigg(\max\Bigg\{\frac{1}{(1-\lambda)n^{\frac{p}{6p-4}}T^{\frac{2p-1}{3p-2}}},\frac{1}{(nT)^{\frac{2(p-1)}{3p-2}}}\Bigg\}\Bigg).
    \end{equation} Fourth, if $\alpha = C_2n^{\frac{p}{3(3p-2)}}T^{\frac{2(1-p)}{3(3p-2)}}\gamma^{-2/3}$, then
    \begin{equation}\label{eq:bound-case-4}
        \frac{1}{nT}\sum_{t \in [T]}\sum_{i \in [n]}\|\nabla f(\xit)\|^2 = \widetilde{\bigO}\Bigg(\max\Bigg\{\frac{n^{\frac{2(p-1)}{3(3p-2)}}}{(1-\lambda)^{\frac{4}{3}}T^{\frac{7p-4}{3(3p-2)}}},\frac{n^{\frac{2-p}{3(3p-2)}}}{(1-\lambda^2)^{\frac{2}{3}}T^{\frac{7p-6}{3(3p-2)}}}\Bigg\}\Bigg).
    \end{equation} Finally, the only remaining case is that of $\alpha = C_3\gamma^{-1}$, where we get 
    \begin{equation}\label{eq:bound-case-5}
        \frac{1}{nT}\sum_{t \in [T]}\sum_{i \in [n]}\|\nabla f(\xit)\|^2 = \widetilde{\bigO}\Bigg(\max\Bigg\{\frac{\sqrt{n}}{(1-\lambda)T}, \frac{n^{\frac{1}{3p-2}}}{T^{\frac{3(p-1)}{3p-2}}}\Bigg\}\Bigg).
    \end{equation} The proof is completed by combining \eqref{eq:bound-case-1}-\eqref{eq:bound-case-5}.
\end{proof}

\subsection{In-Expectation Results}\label{subsup:ie}

To prove the Theorem \ref{thm:non-cvx-ie}, we build on the HP convergence result using the fact that clipped methods generate sequences with bounded increments, an idea previously used in, e.g., \citep{armacki2024_ldp+mse,chezhegov2026last}. Let $\nabla_1 \coloneqq \|\nabla f(\ox^1)\|^2$. We then have the following result. 

{
\renewcommand{\thetheorem}{2}
\begin{theorem}
    Let Assumptions \ref{asmpt:network}-\ref{asmpt:noise} hold. If for any $T \geq 2$, the clipping threshold and the step-size are chosen as $\gamma = \max\Big\{2A + 4B\sqrt{L\Delta_1} + \frac{2\lambda L\sqrt{n}}{1-\lambda}, (nT)^{\frac{1}{3p-2}} \Big\}$ and $\alpha = \min\Big\{\frac{1}{2L}, \frac{1-\lambda^2}{8\lambda LB\sqrt{3}}, \frac{1-\lambda^2}{\lambda L\sqrt{6}}, \linebreak \frac{(1-\lambda^2)^2}{6\lambda^2L}, \frac{(1-\lambda^2)^{2/3}\Delta_1^{1/3}}{2(\lambda LA)^{2/3}T^{1/3}\sqrt[3]{12}}, \frac{C_1}{\gamma}n^{\frac{2p-1}{3p-2}}T^{\frac{1-p}{3p-2}},\frac{C_2}{\gamma^{2/3}}n^{\frac{p}{3(3p-2)}}T^{\frac{2(1-p)}{3(3p-2)}},\frac{C_3}{\gamma} \Big\}$, where $C_1 = \min\Big\{\frac{\Delta_1}{64\sigma^{2p}}, \linebreak \frac{\sqrt{\Delta_1}}{192\log(6n^4T^5)\sigma^{p/2}\sqrt{2L}} \Big\}$, $C_2 = \frac{\Delta_1^{1/3}(1-\lambda^2)^{2/3}}{12\sigma^{p/3}\lambda^{2/3}L^{2/3}}$ and $C_3 = \min\Big\{1,\frac{3\sqrt{\Delta_1}}{64\log(6n^4T^5)\sqrt{2L}}\Big\}$, then
    \begin{align*}
        \frac{1}{nT}\sum_{t \in [T]}\sum_{i \in [n]}\E\|\nabla f(\xit)\|^2 = \widetilde{\bigO}\Bigg(\max\Bigg\{(nT)^{\frac{2(1-p)}{3p-2}}, \frac{n^{\frac{1}{3p-2}}}{T^{\frac{3(p-1)}{3p-2}}}, \frac{n^{\frac{-p}{6p-4}}T^{\frac{1-2p}{3p-2}}}{(1-\lambda)},\\ \frac{n^{\frac{2(p-1)}{3(3p-2)}}}{(1-\lambda)^{\frac{4}{3}}T^{\frac{7p-4}{3(3p-2)}}}, \frac{n^{\frac{2-p}{3(3p-2)}}}{(1-\lambda^2)^{\frac{2}{3}}T^{\frac{7p-6}{3(3p-2)}}}, \frac{T^{-2/3}}{(1-\lambda^2)^{2/3}}, \frac{T^{-1}}{(1-\lambda^2)^2}, \frac{\sqrt{n}T^{-1}}{1-\lambda} \Bigg\}\Bigg).
    \end{align*} 
\end{theorem}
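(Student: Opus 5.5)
We convert the high-probability guarantee into an in-expectation one by splitting the expectation over a good event and its complement, and using that clipped iterates have bounded increments to control the bad event. Fix $\delta = \frac{1}{n^4T^4}$, so that $\log(6T/\delta) = \log(6n^4T^5)$. This matches the constants $C_1,C_3$ in the statement, and Lemma \ref{lm:desirable-event} applies verbatim. Let $\Gamma \coloneqq \Gamma_{T+1}$. Then $\Prob(\Gamma) \geq 1-\delta$, and, exactly as in the proof of Theorem \ref{thm:non-cvx-hp}, on $\Gamma$ we have $\frac{1}{nT}\sum_{t,i}\|\nabla f(\xit)\|^2 \leq \frac{32\Delta_1}{\alpha T}$. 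Hence
\begin{equation*}
\frac{1}{nT}\sum_{t \in [T]}\sum_{i \in [n]}\E\|\nabla f(\xit)\|^2 \leq \frac{32\Delta_1}{\alpha T} + \frac{1}{nT}\sum_{t \in [T]}\sum_{i \in [n]}\E\big[\|\nabla f(\xit)\|^2\mathbb{1}_{\Gamma^c}\big].
\end{equation*}
The first term is handled by the same five-case analysis of $\alpha$ as in \eqref{eq:bound-case-1}--\eqref{eq:bound-case-5}, now with $\log(6n^4T^5)$ in place of $\log(6T/\delta)$, which is absorbed into $\widetilde{\bigO}$. This produces all the terms in the stated maximum.

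For the second term we use a deterministic bound on gradient magnitudes. Since every clipped vector has norm at most $\gamma$, the averaged model moves by $\|\oxtp - \oxt\| = \alpha\|\hgt\| \leq \alpha\gamma$. Therefore $\|\ox^t - \ox^1\| \leq \alpha\gamma T$ almost surely. Lemma \ref{lm:consensus-crude} gives $\|\xit - \oxt\| \leq \frac{\alpha\gamma\lambda\sqrt{n}}{1-\lambda}$. By $L$-smoothness of $f$,
\begin{equation*}
\|\nabla f(\xit)\| \leq \sqrt{\nabla_1} + L\alpha\gamma T + \frac{L\alpha\gamma\lambda\sqrt{n}}{1-\lambda}.
\end{equation*}
The step-size includes the constraint $\alpha \leq C_3/\gamma \leq 1/\gamma$, so $\alpha\gamma \leq 1$. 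Consequently, almost surely,
\begin{equation*}
\|\nabla f(\xit)\|^2 \leq 3\nabla_1 + 3L^2T^2 + \frac{3L^2\lambda^2 n}{(1-\lambda)^2}.
\end{equation*}
Multiplying by $\Prob(\Gamma^c) \leq \delta = n^{-4}T^{-4}$ bounds the second term by $\bigO\big(\nabla_1 n^{-4}T^{-4} + L^2T^{-2} + \frac{n^{-3}T^{-4}}{(1-\lambda)^2}\big)$. Each of these is dominated by terms already present, for instance $\frac{T^{-1}}{(1-\lambda^2)^2}$.

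The main obstacle is checking that the high-probability machinery is valid with this $T$- and $n$-dependent choice of $\delta$. The log factor enters $C_1$ and $C_3$, and hence $\alpha$. We must confirm that in the five-case analysis these changes only add polylogarithmic factors, and that $\delta \in (0,1)$ for $T\geq 2$, which explains why the statement requires $T \geq 2$. The argument also relies on the almost-sure boundedness of the increments, which is exactly what clipping provides. The powers $n^4T^5$ are chosen generously so that the $T^2$ and $n$ growth in the crude gradient bound is cancelled with room to spare.
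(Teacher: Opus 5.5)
Your proposal is correct and follows essentially the paper's argument. Like the paper, you convert the high-probability bound into an in-expectation one with $\delta=(nT)^{-4}$, so that $\log(6T/\delta)=\log(6n^4T^5)$, and you control the bad event through the almost-sure growth bound on gradients that clipping provides.

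The differences are minor. You split on $\Gamma_{T+1}$ and apply the almost-sure bound directly, which gives a factor $\delta$ rather than the $\sqrt{\delta}$ the paper obtains via H\"older's inequality. As a result, the crude constraint $\alpha\gamma\le C_3\le 1$ is enough for you, whereas the paper invokes the $C_1$ step-size term to control $\alpha^2\gamma^2T^2$. As an aside, $\delta<1$ already holds for $T=1$ because $n\geq 2$, so that is not what the condition $T\geq 2$ is for.
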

}

\begin{proof}
    For any $T \geq 2$ and $\delta \in (0,1)$, denote by $G^T_\delta \coloneqq \frac{1}{nT}\sum_{t \in [T]}\sum_{i \in [n]}\|\nabla f(\xit(\delta))\|^2$ and let $M^T_\delta =  \wbigO\Big(\max\Big\{(nT)^{\frac{2(1-p)}{3p-2}}, \frac{n^{\frac{1}{3p-2}}}{T^{\frac{3(p-1)}{3p-2}}}, \frac{n^{\frac{-p}{6p-4}}T^{\frac{1-2p}{3p-2}}}{(1-\lambda)},\frac{n^{\frac{2(p-1)}{3(3p-2)}}}{(1-\lambda)^{\frac{4}{3}}T^{\frac{7p-4}{3(3p-2)}}}, \frac{n^{\frac{2-p}{3(3p-2)}}}{(1-\lambda^2)^{\frac{2}{3}}T^{\frac{7p-6}{3(3p-2)}}}, \frac{T^{-2/3}}{(1-\lambda^2)^{2/3}}, \linebreak \frac{T^{-1}}{(1-\lambda^2)^2}, \frac{\sqrt{n}T^{-1}}{1-\lambda} \Big\}\Big)$, noting that, for any $\delta \in (0,1)$, we have
    \begin{align}\label{eq:expectation-setup}
        \E[G^T_\delta] &= \E\big[G^T_\delta\big(\mathbb{I}_{\{G^T_\delta \leq M^T_\delta \}} + \mathbb{I}_{\{G^T_\delta > M^T_\delta \}}\big)\big] \stackrel{(i)}{\leq} \E\big[M^T_\delta\mathbb{I}_{\{G^T_\delta \leq M^T_\delta \}}\big] + \sqrt{\E\big[\big(G^T_\delta\big)^2\big]\E\big[\mathbb{I}^2_{\{G^T_\delta > M^T_\delta \}}\big]} \nonumber \\
        &= M^T_\delta\Prob\big(G^T_\delta \leq M^T_\delta \big) + \sqrt{\E\big[\big(G^T_\delta\big)^2\big]\Prob\big(G^T_\delta > M^T_\delta \big)} \stackrel{(ii)}{\leq} M^T_\delta + \sqrt{\delta\E\big[\big(G^T_\delta\big)^2\big]},
    \end{align} where $(i)$ follows from H\"{o}lder's inequality, while $(ii)$ follows from Theorem \ref{thm:non-cvx-hp}. We now proceed to bound the term $G^T_\delta$. To that end, first note that for any $t \geq 1$, it holds that
    \begin{align}\label{eq:iterate-bdd}
        \|\bxt - \bx^1\| &= \|\bW(\bxtm - \bx^1 - \alpha\tbgt)\| \leq \|\bW(\bxtm - \bx^1)\| + \alpha\|\bW\tbgt\| \nonumber \\ 
        &\leq \|\bxtm - \bx^1\| + \alpha\gamma\sqrt{n} \leq \ldots \leq \alpha\gamma(t-1)\sqrt{n} \leq \alpha\gamma t\sqrt{n},
    \end{align} where the second inequality follows from $\|\bW\| \leq 1$ and $\|\tbgt\| \leq \gamma\sqrt{n}$. From the definition of $G^T_\delta$, we then get
    \begin{equation*}
        G^T_{\delta} \leq \frac{2}{n}\sum_{i \in [n]}\|\nabla f(x_i^1)\|^2 + \frac{2L^2}{nT}\sum_{t \in [T]}\|\bxt - \bx^1\|^2 \leq 2\nabla_1 + \frac{2\alpha^2\gamma^2L^2}{T}\sum_{t \in [T]}t^2 \leq 2\nabla_1 + 2\alpha^2\gamma^2L^2T^2, 
    \end{equation*} where the second inequality follows from \eqref{eq:iterate-bdd} and the fact that agents have an identical initialization. Plugging this into \eqref{eq:expectation-setup}, we get
    \begin{equation}
        \E\big[G_{\delta}^T\big] \leq M_\delta^T + 2(\nabla_1 + \alpha^2\gamma^2L^2T^2)\delta^{1/2} \leq M_\delta^T + 2\bigg(\nabla_1 + C_1^2L^2(nT)^{\frac{2(2p-1)}{3p-2}}\bigg)\delta^{1/2},
    \end{equation} where the second inequality follows from the choice of $\alpha$ and $\gamma$. Choosing $\delta = (nT)^{-4} \in (0,1)$, it follows that 
    \begin{equation*}
        \E\big[G^T_\delta\big] \leq M_{\delta}^T + \frac{2\nabla_1}{n^2T^2} + \frac{C_1^2L^2}{(nT)^{\frac{2(p-1)}{3p-2}}} = \wbigO\big(M_\delta^T\big).
    \end{equation*}
    
\end{proof}

\section{Experiment Details}\label{sup:num}

In this section we provide details of the experimental setup and some additional results, following similar setups as the ones considered in \citep{yu2026decentralized}. Subsection \ref{subsup:quad} details the quadratic example, Subsection \ref{subsup:synth} details the robust regression experiments, while Subsection \ref{subsup:trans} details the experiment on distributed training of transformers. 

\subsection{Convergence on a Quadratic Cost}\label{subsup:quad}

As mentioned in the main body, \cite{yu2026decentralized} provide a simple quadratic example for which \dsgd with normalization fails to converge, even with noiseless gradients. We verify numerically that the non-convergence is fixed when using gradient clipping with an appropriately chosen clipping radius. In particular, we consider a network with $n = 20$ agents communicating over a fully connected graph. Each agent has access to a local quadratic cost $f_i(x) = \frac{1}{2}(x-a_i)^2$, where $a_i = 0$ for agents $i \in [10]$, while $a_i = 10$ for the rest. As such, it can be readily verified that Assumption \ref{asmpt:bdd-het} is satisfied, e.g., with $A = 5\sqrt{2}$ and $B = \sqrt{2}$.\footnote{It can be shown that Assumption \ref{asmpt:bdd-het} is satisfied for any $A,B$ such that $B > 1$ and $A^2 \geq \frac{25B^2}{B^2 - 1}$.} All agents are initialized at $x_i^1=0.5$, while the global cost is minimized at $x^\star=5$. We compare vanilla \dsgd with normalized (\dsgdn) and clipped \dsgd (\dsgdclip), with all three methods having access to noiseless gradients and using the constant step-size $\alpha=0.5$, and clipped \dsgd using the clipping radius $\gamma=6$. Convergence of all three methods is presented in Figure \ref{fig:normalization_clipping_example} in the main body, while in Figure \ref{fig:normalization_clipping_example_add} we additionally show the magnitude of the effective gradient, i.e., $\left|\frac{1}{n}\sum_{i=1}^{n}\widetilde g_i^t\right|,$ where $\widetilde g_i^t$ denotes the vanilla, normalized or clipped gradient used in the update. \dsgdn cancels out the effective gradient (Figure \ref{fig:normalization_clipping_example_add}, right) and remains stuck at initialization, while \dsgdclip preserves an effective direction, even when clipping is active in the first few iterations, gradually converging to the optimum.

\begin{figure}
    \centering
    \begin{tabular}{cc}
         \includegraphics[width=0.45\linewidth]{Plots/example1_error.png}
         &
         \includegraphics[width=0.45\linewidth]{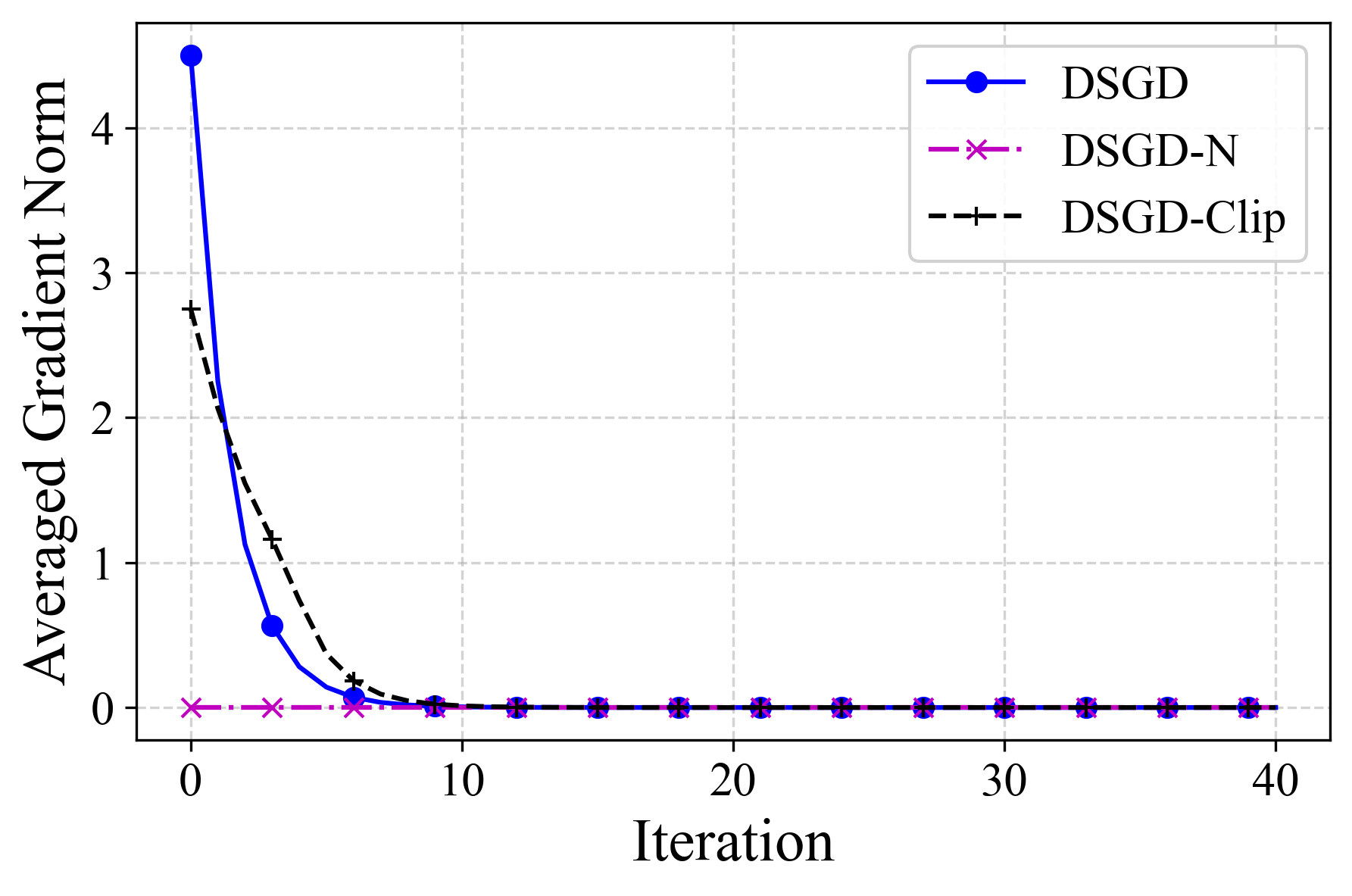} 
    \end{tabular}
    
    \caption{Convergence of vanilla, normalized and clipped \dsgd on a quadratic example.}
    \label{fig:normalization_clipping_example_add}
\end{figure}

\subsection{Decentralized Linear Regression}\label{subsup:synth}

Agents have access to a local dataset $\{(A_{i,k},b_{i,k})\}_{k=1}^{N_i}$ and the local cost is given by $f_i(x) = \frac{1}{N_i}\sum_{k=1}^{N_i}\ell\big(b_{i,k}-\langle A_{i,k},x\rangle\big)$, where $\ell$ is the Tukey biweight loss, defined as
\begin{align}
    \ell(r)=
    \begin{cases}
        \displaystyle
        \frac{c^2}{6}
        \left[1-\left(1-\frac{r^2}{c^2}\right)^3\right],
        & |r|\leq c,\\[1ex]
        \displaystyle \frac{c^2}{6},
        & |r|>c.
    \end{cases}
\end{align} We choose the constant $c=4.6851$. We tune every method (i.e., vanilla, normalized, clipped and component-wise clipped \dsgd) separately for each noise distribution. All methods choose their step-size $\alpha$ over the grid
$$
\alpha \in \{10^{-5},\,5{\times}10^{-5},\,10^{-4},\,5{\times}10^{-4},
\,10^{-3},\,5{\times}10^{-3},\,10^{-2},\,5{\times}10^{-2},
\,10^{-1},\,5{\times}10^{-1},\,1,\,5,\,10\}.
$$
For \dsgdclip and \dsgdcclip, we further chose the clipping radius $\gamma$ over the grid
$$
\gamma \in \{10^{-3},\,5{\times}10^{-3},\,10^{-2},\,5{\times}10^{-2},
\,10^{-1},\,5{\times}10^{-1},\,1,\,5,\,10,\,20,\,50,\,100,\,500\}.
$$
For linear speed-up experiments, we use the best step-size and clipping radius selected in the preceding experiment, and repeat the experiment over $30$ runs with independently sampled graphs and gradient noise in each run.

\subsection{Decentralized Training of Transformers}\label{subsup:trans}

For each method and dataset, we chose the step-size from the grid
$$
\alpha\in \{10^{-4},5\times10^{-4},10^{-3},5\times10^{-3},
10^{-2},5\times10^{-2},0.1,0.5,0.8,1,2,5,10,20,50,100\},
$$
while for \dsgdclip and \dsgdcclip, we additionally chose the clipping radius via a grid search over
$$
\gamma\in\{10^{-4},10^{-3},10^{-2},0.05,0.1,0.5,1,5,10,100,500\}.
$$
The best hyperparameter combination is selected based on the final validation loss, measured as log-perplexity. We then run each method independently five times and report the average validation performance.

Figure \ref{fig:ring-curves} shows the full learning curves corresponding to the results in Table \ref{tab:transformers} in the main body. We can see that \dsgdcclip consistently achieves the lowest validation loss on both datasets, while \dsgdn and \dsgdclip perform similarly and substantially improve over vanilla \dsgd.

\begin{figure}
\centering
\begin{tabular}{cc}
\includegraphics[width=0.45\linewidth]{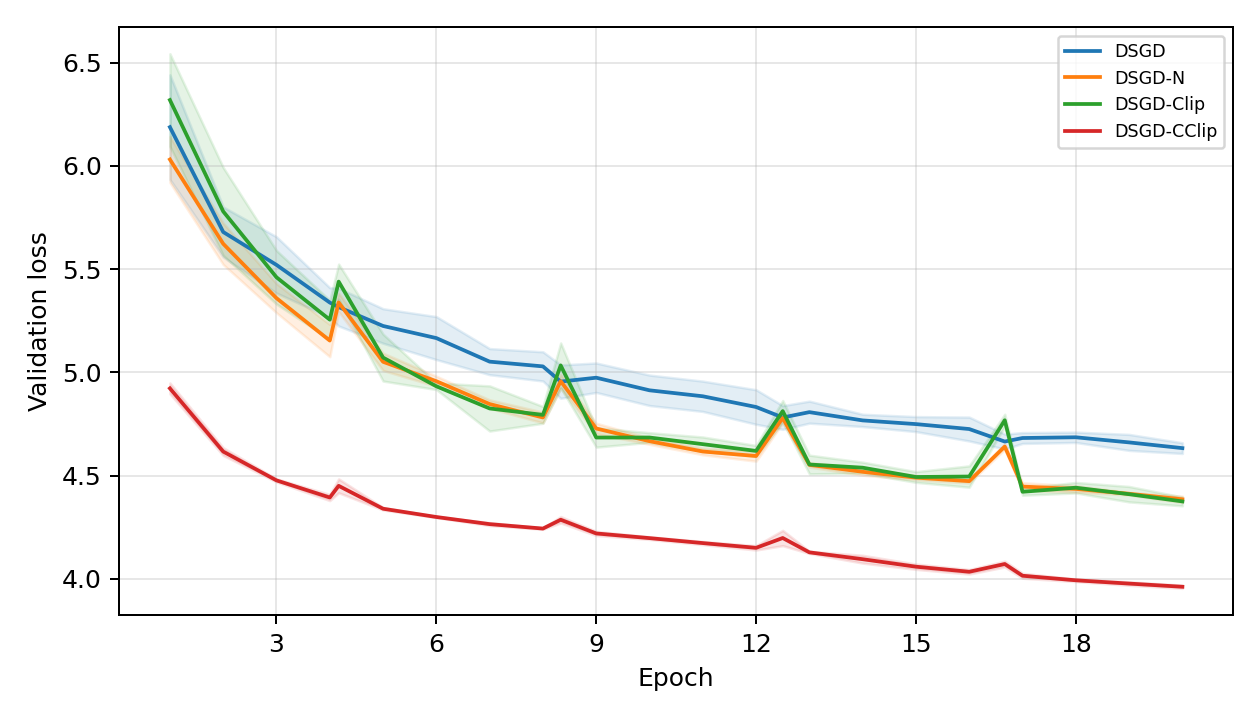}
&
\includegraphics[width=0.45\linewidth]{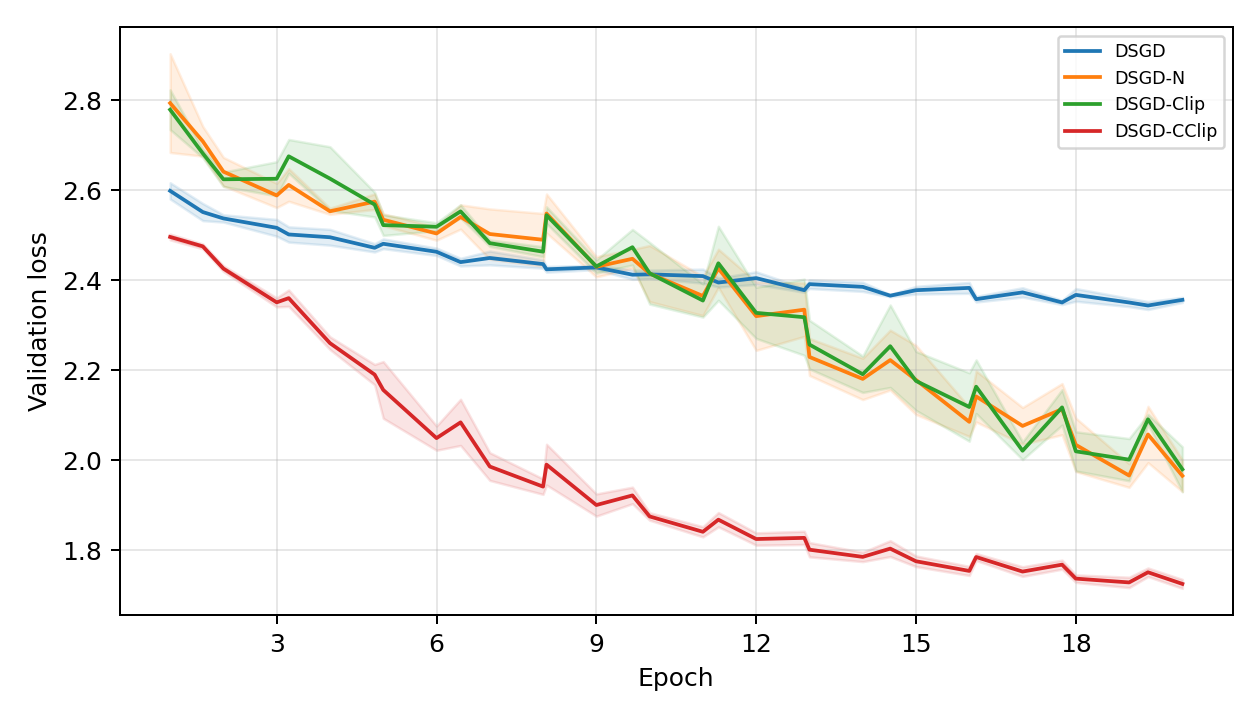}
\end{tabular}
\caption{Validation loss versus training epoch for decentralized transformer training. Left: ``Multi30k''; right: ``Tiny Shakespeare'' dataset.}
\label{fig:ring-curves}
\end{figure}

\section{On the Heterogeneity Condition}\label{sup:het}

As discussed in the main body, even the vanilla \dsgd method requires a heterogeneity condition to ensure convergence under non-convex costs. Assumption \ref{asmpt:bdd-het} was previously used for vanilla \dsgd in works including \citep{vlaski-dsgd-nonconv-1,vlaski-dsgd-nonconv-2,armacki2026dsgd}. However, convergence of vanilla \dsgd has been shown under a slightly weaker notion of heterogeneity, namely the \emph{average heterogeneity}, i.e.,
\begin{equation}\label{eq:avg-het}
    \frac{1}{n}\sum_{i \in [n]}\|\nabla f_i(x)\|^2 \leq A^2 + B^2\|\nabla f(x)\|^2,
\end{equation} see, e.g., \citep{NIPS2017_f7552665,pmlr-v119-koloskova20a}. In our work, control of gradients of each agent is needed to show that gradients of agents fall below the clipping threshold with high probability. This is achieved by using the heterogeneity condition in Assumption \ref{asmpt:bdd-het} and controlling the global gradient and the consensus gap, i.e., through the bound
\begin{equation*}
    \|\nabla f_i(\xit)\| \leq L\|\xit - \oxt\| + \|\nabla f_i(\oxt)\| \leq L\|\xit - \oxt\| + A + B\|\nabla f(\oxt)\|. 
\end{equation*} While it is possible to incorporate the averaged heterogeneity condition in our work through the bound
\begin{equation*}
    \max_{i \in [n]}\|\nabla f_i(x)\|^2 \leq \sum_{i \in [n]}\|\nabla f_i(x)\|^2 \stackrel{\eqref{eq:avg-het}}{\leq} nA^2 + nB^2\|\nabla f(x)\|^2,
\end{equation*} we note that it incurs a factor of $\sqrt{n}$ in both $A$ and $B$ compared to Assumption \ref{asmpt:bdd-het}, due to the loose bound in the first inequality. Seeing that both $A$ and $B$ in our results appear as conditions on the step-size, this loose bound would not result in the loss of linear speed-up and would only affect the transient time needed to achieve the global network independent rate $\wbigO\Big((nT)^{\frac{2(1-p)}{3p-2}}\Big)$, resulting in slightly larger transient time.

\section{On the Transient Time}\label{sup:tran-time}

In this section we provide details on the transient time resulting from our bounds. Noting that the bounds in Theorems \ref{thm:non-cvx-hp} and \ref{thm:non-cvx-ie} are already worst-case with respect to $n$ and $1- \lambda$, we can directly inspect the bound, which is given by
\begin{align}\label{eq:tran-t-non-conv-1}
    \widetilde{\bigO}\Bigg(\max\Bigg\{(nT)^{\frac{2(1-p)}{3p-2}}, \frac{n^{\frac{1}{3p-2}}}{T^{\frac{3(p-1)}{3p-2}}}, \frac{n^{\frac{-p}{6p-4}}T^{\frac{1-2p}{3p-2}}}{(1-\lambda)},\frac{n^{\frac{2(p-1)}{3(3p-2)}}}{(1-\lambda)^{\frac{4}{3}}T^{\frac{7p-4}{3(3p-2)}}}, \nonumber \\ \frac{n^{\frac{2-p}{3(3p-2)}}}{(1-\lambda^2)^{\frac{2}{3}}T^{\frac{7p-6}{3(3p-2)}}}, \frac{T^{-2/3}}{(1-\lambda^2)^{2/3}}, \frac{T^{-1}}{(1-\lambda^2)^2}, \frac{\sqrt{n}T^{-1}}{1-\lambda} \Bigg\}\Bigg).
\end{align} Since our goal is to find the smallest $T$ such that the bound in \eqref{eq:tran-t-non-conv-1} becomes $\wbigO\Big((nT)^{\frac{2(1-p)}{3p-2}}\Big)$, it can be readily verified that this is the case for 
\begin{align}\label{eq:tran-t}
    T = \wbigO\Bigg(\max&\Bigg\{n^{\frac{2p-1}{p-1}},\frac{n^{\frac{3p-4}{2}}}{(1-\lambda)^{3p-2}}, \frac{n^{\frac{8(p-1)}{2+p}}}{(1-\lambda)^{\frac{4(3p-2)}{2+p}}}, \nonumber \\ &\frac{n^{\frac{5p-4}{p}}}{(1-\lambda)^{\frac{2(3p-2)}{p}}}, \frac{n^{3(p-1)}}{(1-\lambda)^{3p-2}}, \frac{n^{\frac{2(p-1)}{p}}}{(1-\lambda)^{\frac{2(3p-2)}{p}}}, \frac{n^{\frac{7p-6}{2p}}}{(1-\lambda)^{\frac{3p-2}{p}}} \Bigg\}\Bigg).
\end{align} We note two things. First, as $p  \rightarrow 1$, the term $n^{\frac{2p-1}{p-1}}$ dominates and the transient time becomes independent of network connectivity, only depending on the number of agents. Second, in the special case when $p = 2$, the transient time in \eqref{eq:tran-t} evaluates to $\wbigO\Big( \frac{n^3}{(1-\lambda)^4}\Big)$, matching the transient time of vanilla \dsgd under bounded variance/sub-Gaussian noise in \citep{pmlr-v119-koloskova20a,unified-refined,armacki2026dsgd}.

\end{document}

%% file: my_macros.tex
\usepackage{nicefrac}
\usepackage{amsmath}
\usepackage{amsthm}
\usepackage{amssymb}
\usepackage{amsfonts}
\usepackage{comment}
\usepackage{mathtools}
\usepackage{algorithm}
\usepackage{graphicx}
\usepackage{xspace}

\DeclareMathOperator*{\argmin}{arg\,min}

\usepackage{xcolor}
\usepackage{enumitem}

\usepackage{bbding}

\newtheorem{theorem}{Theorem}

\newtheorem{assumption}{Assumption}
\newtheorem{lemma}{Lemma}
\newtheorem{remark}{Remark}

\newcommand{\bigO}{\mathcal{O}}
\newcommand{\wbigO}{\widetilde{\bigO}}

\newcommand{\sfo}{$\mathcal{SFO}$\xspace}

\newcommand{\dsgd}{\textbf{\texttt{DSGD}}\xspace}
\newcommand{\dsgdclip}{\textbf{\texttt{DSGD-Clip}}\xspace}
\newcommand{\dsgdcclip}{\textbf{\texttt{DSGD-CClip}}\xspace}
\newcommand{\dsgdn}{\textbf{\texttt{DSGD-N}}\xspace}

\newcommand{\tbgt}{{\widetilde{\mathbf g}^{t}}}

\newcommand{\bx}{{\mathbf x}}
\newcommand{\bxt}{{\mathbf x^{t}}}

\newcommand{\bxtp}{{\mathbf x^{t+1}}}
\newcommand{\bxtm}{{\mathbf x^{t-1}}}

\newcommand{\Prob}{\mathbb{P}}

\usepackage{xcolor}

\usepackage{tikz}
\usetikzlibrary{calc,trees,positioning,arrows,chains,shapes.geometric,%
	decorations.pathreplacing,decorations.pathmorphing,shapes,%
	matrix,shapes.symbols}

\tikzstyle{startstop} = [rectangle, draw, rounded corners, align=center, minimum width=3cm, minimum height=1cm,text centered]
\tikzstyle{decision} = [diamond, draw, fill=blue!20, 
text width=4.5em, text badly centered, node distance=3cm, inner sep=0pt]
\tikzstyle{block} = [rectangle, draw, fill=blue!10, align=center, rounded corners, minimum width=3cm, minimum height=1cm]
\tikzstyle{blockcast} = [rectangle, draw, fill=red!10, align=center, rounded corners, minimum width=3cm, minimum height=0.45cm]
\tikzstyle{line} = [draw, -latex']
\tikzstyle{cloud} = [draw, ellipse,fill=red!20, node distance=3cm,
minimum height=2em]

\newcommand{\R}{\mathbb{R}}
\newcommand{\E}{\mathbb{E}}
\newcommand{\N}{\mathbb{N}}

\newcommand{\calN}{\mathcal{N}}

\newcommand{\Ft}{\mathcal{F}_t}

\newcommand{\bW}{\mathbf{W}}

\newcommand{\bJ}{\mathbf{J}}

\newcommand{\ox}{\overline{x}}
\newcommand{\obx}{\overline{\bx}}

\newcommand{\nbft}{\nabla \mathbf{f}^t}
\newcommand{\nbfk}{\nabla \mathbf{f}^k}

\newcommand{\onft}{\overline{\nabla} f^t}

\newcommand{\ozt}{\overline{z}^t}

\newcommand{\xit}{x^t_i}
\newcommand{\xik}{x^k_i}
\newcommand{\xitp}{x^{t+1}_i}

\newcommand{\wbgk}{\widetilde{\mathbf{g}}^k}

\newcommand{\wbgt}{\widetilde{\mathbf{g}}^t}

\newcommand{\git}{g^t_i}

\newcommand{\tgit}{\widetilde{g}^t_i}
\newcommand{\tgik}{\widetilde{g}^k_i}
\newcommand{\tgjt}{\widetilde{g}^t_j}
\newcommand{\tgitb}{\widetilde{g}^t_{i,b}}
\newcommand{\tgitu}{\widetilde{g}^t_{i,u}}
\newcommand{\tgikb}{\widetilde{g}^k_{i,b}}
\newcommand{\tgiku}{\widetilde{g}^k_{i,u}}
\newcommand{\ogt}{\overline{g}^t}

\newcommand{\hgt}{\widehat{g}^t}

\newcommand{\hgtu}{\widehat{g}^t_u}
\newcommand{\hgtb}{\widehat{g}^t_b}
\newcommand{\hgku}{\widehat{g}^k_u}
\newcommand{\hgkb}{\widehat{g}^k_b}
\newcommand{\zit}{z^t_i}

\newcommand{\oxt}{\overline{x}^t}
\newcommand{\oxk}{\overline{x}^k}
\newcommand{\oxtp}{\overline{x}^{t+1}}
\newcommand{\obxt}{\overline{\bx}^t}
\newcommand{\obxtp}{\overline{\bx}^{t+1}}

\newcommand{\fs}{f^\star}

\newcommand{\clip}{\texttt{clip}}

\DeclareMathOperator{\bern}{Bernoulli}